\documentclass[12pt,a4paper]{article}
\usepackage[utf8]{inputenc}
\usepackage[english]{babel}
\usepackage{amsmath}
\usepackage{amsfonts}
\usepackage{graphicx}
\usepackage{fouridx}
\usepackage{amsthm}
\usepackage{amssymb, amscd}
\usepackage{enumerate}
\usepackage{mathtools}
\usepackage[french]{varioref}
\usepackage{multirow} 
\usepackage{hyperref}
\usepackage{bm}
\usepackage{multicol}
\usepackage{multirow}
\author{ Saadaoui Nejib \thanks{ Université de Gabès, Laboratoire
		Mathématiques et Applications. Faculté des Sciences de Gabès
		Cité Erriadh 6072 Zrig Gabès Tunisie.
,\qquad \textbf{nejib.saadaoui@fsg.rnu.tn}}}
\title{ (Bi)Hom–Leibniz algebra}
\newtheorem{theorem}{Theorem}[section]
\newtheorem{cor}[theorem]{Corollary}
\newtheorem{thm}[theorem]{Theorem}
\newtheorem{lem}[theorem]{Lemma}
\newtheorem{prop}[theorem]{Proposition}
\newtheorem{example}{Example}[section]
\newtheorem{defn}[theorem]{Definition}
\newtheorem{remq}[theorem]{Remark}
\numberwithin{equation}{section}
\usepackage{multicol}
\input{epsf.sty}

\usepackage{varioref}


\newcommand{\Z}{\mathbb{Z}}

\newcommand{\K}{\mathbb{K}}




\begin{document} 
	\maketitle

	\begin{abstract}
 The first aim of this paper is to introduce and study  symmetric (Bi)Hom-Leibniz algebras, which are
left and right Leibniz algebras.  We discuss $\alpha^k\beta^l$-generalized derivations, $\alpha^k\beta^l$
-quasi-derivations and 
$\alpha^k\beta^l$-quasi-centroid of (Bi)Hom-Leibniz algebras and colour BiHom-Leibniz algebras. 
The second aim  is to define  a new type of BiHom-Lie algebras satisfies the following hierarchy 
\begin{equation*}
\displaystyle \{ \text{   BiHom-Lie type $B_1$}  \}\supseteq_{\beta=id}   \{ \text{   Hom-Lie}  \}\supseteq_{\alpha=id} \{ \text{   Lie}  \}. 
\end{equation*}  
 Moreover, define representations and a cohomology of symmetric  BiHom- Leibniz algebras of type $B_1$.
	\end{abstract}
\section*{Introduction}	
In 1993, J.L. Loday introduced  Leibniz algebras which are a generalization of Lie
algebras \cite{Loday1,Loday2}. They are defined by a bilinear bracket which is no longer skew-symmetric. More precisely,	
A left Leibniz  algebra over a a field $\K$  
is a $\K$-vector space $A$  with a $\K$-bilinear map $[\cdot,\cdot] \colon A\times A\to  A $ satisfying 
\begin{equation}\label{equtionL}
\displaystyle  \big[a,[ b,c ]  \big]=\big[[a , b],c   \big] +  
\big[b ,[a,c]   \big],
\end{equation}
for all $a,\, b,\, c\in A$.\\
This property means that, for each $a$ in $A$, the adjoint endomorphism of $A$, $ad_a=[a,\cdot]$ is a derivation of 
$(A,[\cdot,\cdot])$.

Similarly,  a right Leibniz algebra is defined by the identity :
\begin{equation}\label{right1}
\displaystyle \big[[a , b],c  \big] =\big[[a,c] , b \big] +   
\big[a,[b,c]   \big],      
\end{equation}
which means for each $a \in A$, the map  $  x\mapsto  [x,a ]$  is a derivation of $A$.
A left or right Leibniz algebra in which bracket $[\cdot,\cdot]$ is skew-symmetric (alternating, if  $\K$ is of characteristic $2$) is a Lie algebra.
Notice
from (1) that a left Leibniz algebra $A$ satisfies,	for all $a,x,y\in A$
$ [a,[ x,y]]=-[a,[ y,x]], $  
and dually, a right Leibniz algebra satisfies   $ [[ x,y],a]=-[[ y,x],a]$. Symmetric Leibniz algebras satisfy
\begin{equation}\label{symmetric1}
[a,[ x,y]]=-[[ x,y],a].     
\end{equation}

In
the last years, the theory of Leibniz algebras has been extensively studied. Many
results on Lie algebras have been generalized to the case of Leibniz
algebras (\cite{CassasClssif,covez,cuvier,Demir,Fialowski,Geoffrey,Gorbatsevich,Gomez,Ray}). More recently, several papers deal with a so-called symmetric Leibniz algebras, which are  left and et right Leibniz algebras.

A superalgebra is a $\Z_2$-graded algebra $A=A_{0}\oplus A_{1}$  (that is, if $a\in A_\alpha,\, b\in A_\beta$,  $\alpha ,\,\beta\in \Z_2=\{0,1\}$, then, $ab\in A_{\alpha+\beta}$). The elements of $ A_{\alpha} $ are said to be homogeneous and of parity $ \alpha $.
A Lie superalgebra
is a superalgebra $A$ with an operation $[\cdot,\cdot]$ satisfying the following
identities:
\begin{align*}
[a,b]&=(-1)^{|a||b|} [b,a]\qquad &\text{  (Skew-supersymmetry)}\\
\displaystyle  [a,[ b,c ]  ]&=[[a , b],c   ] + (-1)^{|a||b|} 
\big[b ,[a,c]   \big] \qquad &\text{ (Super Jacobi identity)  }
\end{align*}
for all homogeneous elements $a,b,c\in A_{0}\cup A_{1}$.\\
Leibniz superalgebras appeared as an extension of Leibniz algebras (see \cite{Camacho,Hu}), in a similar way than Lie superalgebras generalize Lie algebras, motivated
in part for its applications in Physics. 
Color Lie (super)algebras, originally introduced in  \cite{Ritt,RittV}, can be seen as a direct generalization
of Lie (super)algebras.
Indeed, the latter are defined through antisymmetric (commutator) or
symmetric (anticommutator) products, although for the former  product is neither symmetric
nor antisymmetric and is defined by means of a commutation factor. This commutation factor
is equal to $\mp 1$ for (super)Lie algebras and more general for arbitrary color Lie (super)algebras.
As happened for Lie superalgebras, the basic tool to define color Lie (super)algebras is a grading
determined by an abelian group. The latter, besides defining the underlying grading in the
structure, moreover, provides a new object known as commutation factor.

Hom-algebra structures are given on linear spaces by products twisted by linear
maps. Hom-Lie algebras and general quasi-Hom-Lie and quasi-Lie algebras were introduced
by Hartwig, Larsson and Silvestrov as algebras embracing Lie algebras, super and color
Lie algebras and their quasi-deformations by twisted derivations.
In \cite{homtype}, the authors gives a systematic exploration of other possibilities to define Hom-type algebras.
In \cite{MakhloufABihom}, the authors introduced a generalized algebraic structure endowed with two commuting
multiplicative linear maps, called BiHom-algebras.These algebraic structures include
BiHom-associative algebras,   BiHom-Lie algebras and BiHom-Leibniz algebras.  In some particular cases, when  the two linear maps are
the same,  BiHom-algebras led to Hom-algebras. \\
In this paper, we study  symmetric  (super)Hom-Leibniz, (colour)BiHom-Leibniz, BiHom-Leibniz algebras of type $B_1$ and $B_2$.
Recall from \cite{MakhloufABihom}, that  a BiHom-Lie algebra is 
a $4$-tuple    $ (L,[\cdot,\cdot],\alpha,\beta) $ where $ L $ is a $ \K $-linear
space, $ \alpha,  $  $ \beta\, \colon L\to  L $    are linear maps and   $[\cdot,\cdot] \colon L\times L\to  L  $
is a bilinear map, satisfying the
following conditions, for all $ x,\, y,\, z\in L $ :
\begin{gather}
\alpha\circ\beta=\beta\circ\alpha\\
\alpha\left(  [x,y] \right) =\left[ \alpha(x),\alpha(y) \right] \text{\quad and  \quad  } 
\beta\left(  [x,y] \right) =\left[ \beta(x),\beta(y) \right] \\
\left[ \beta(x),\alpha(y)\right] =-\left[ \beta(y),\alpha(x) \right], \text{ \quad  (BiHom-skew-symmetry) \quad }\label{BiHom-skew}\\
\displaystyle \circlearrowleft_{x,y,z} \big[ \beta^2(x) ,\left[ \beta(y),\alpha(z) \right]  \big]=0. \text{ \quad  (BiHom-Jacobi condition)     \quad } \label{BiHom-Jacobi}
\end{gather}

Obviously, there is a hierarchy of algebras.
\begin{equation}\label{hierarchy}
\{ \text{   left Leibniz}  \}\cup\{ \text{   right  Leibniz}  \}\supseteq \{ \text{   symmetric Leibniz}  \}\supseteq \{ \text{   Lie}  \}
\end{equation}
and
\begin{equation}\label{hierarchyLie}
\displaystyle\{ \text{   Hom-Lie}  \}\supseteq_{\alpha=id_L} \{ \text{   Lie}  \}.
\end{equation}

Hence, one recovers Lie algebras properties  from Hom-Lie algebras when stating  $\alpha=id_L$. It turns out that the hierarchy \eqref{hierarchyLie} doesn't hold for BiHom-Lie algebras.
This led us to construct special types  of BiHom-algebras  that retains this property.
We introduce Bihom-Lie algebras of type $B_1$, where  \eqref{BiHom-skew} and \eqref{BiHom-Jacobi} are replaced by 
$ \left[ \beta(x),\beta^2(y)\right] =-\left[ \beta(y),\beta^2(x) \right]$ and  $\displaystyle \circlearrowleft_{x,y,z} \big[ \alpha(x) ,\left[ \beta(y),\beta^2(z) \right]  \big]=0 $.
Then 
\begin{equation*}
\displaystyle \{ \text{   BiHom-Lie type $B_1$}  \}\supseteq_{\beta=id}   \{ \text{   Hom-Lie}  \}\supseteq_{\alpha=id} \{ \text{   Lie}  \}.
\end{equation*}
A BiHom-Lie algebra in the usual sense should be referred to be "BiHom-Lie algebra of type $B_2$".
With  BiHom-Lie algebras of type $B_2$, we have
\begin{equation*}
\displaystyle   \{ \text{   Hom-Lie type $I_2$}  \}\supseteq_{\alpha=id} \{ \text{   Lie}  \}  \text{   and }
\displaystyle \{ \text{   BiHom-Lie type $B_2$}  \} \supseteq_{\substack{\alpha=id\\ \beta=id_L}} \{ \text{   Lie}  \},                     
\end{equation*}
where, the Hom-Lie algebra of type $I_2$ is given by the  skew-symmetric       bilinear bracket  satisfying $\displaystyle \circlearrowleft_{x,y,z} [x,[ y,\alpha(z) ]=0
$.\\


Throughout the article, we mean by a (Bi)Hom-Leibniz algebra a left or right or symmetric (Bi)Hom-Leibniz algebra.

The paper is organized as follows. In Section $1$, we recall definitions and some
key constructions of Hom-Leibniz (super)algebras and give some example of symmetric Hom-Leibniz (super)algebras. Moreover we introduce the concept of centroid and quasicentroid  for  Hom-Leibniz superalgebra (left or right or symmetric) and study some of their properties. In Section $2$, we give some constructions of Hom-Lie algebras by BiHom-Lie algebras and  conversely.
Also, we provide a construction of
BiHom-Leibniz algebras  $L_{(\alpha,\beta)} =(L,\{\cdot,\cdot  \},\alpha, \beta) $ from a Leibniz algebras $(L,[\cdot,\cdot])$. 
We also give some basic  definitions, properties of Ideals of BiHom-Leibniz algebras. This section also includes the concept  of generalized derivations of a BiHom-Leibniz algebras  and some   properties.
Section $3$ is dedicated to BiHom-Leibniz colour algebras, we give  a BiHom-Lie colour algebra   $ (\mathcal{A},[\cdot,\cdot],\alpha,\beta,\varepsilon) $  from an associative colour  algebra $ (\mathcal{A},\mu,\varepsilon) $.  We construct color BiHom-Leibniz algebras starting from two even centroids of  Leibniz or BiHom-Leibniz colour  algebras. In Section $4$, we define BiHom-Lie and BiHom-Leibniz algebras of type $B_1$, we study representations of symmetric BiHom-Leibniz algebras of type $B_1$, and give their cohomology. We show
that any extension of a symmetric BiHom-Leibniz algebras of type  $B_1$, are controlled by the second cohomology with respect its corresponding representation. 

\section{ Symmetric Hom-Leibniz (super)algebras}
In this section, we define the symmetric Hom-Leibniz (super)algebras generalizing the well known Leibniz  algebras given in \cite{Samiha,Geoffrey} and we gives a few examples of the symmetric Hom-Leibniz (super)algebras. We also study some properties of centroids of Hom-Leibniz superalgebras.
\subsection{ Symmetric Hom-Leibniz algebras    }
\begin{defn}\label{HOM}\cite{HLS,Ayedi,AbdenacerM}
	A Hom-Lie algebra  is a triple $(\mathcal{G},\ [\cdot,\cdot],\ \alpha)$\ consisting of a $ \mathbb{K} $ vector space $\mathcal{G}$, a  bilinear map \ $\ [\cdot,\cdot]:\mathcal{G}\times \mathcal{G}\rightarrow \mathcal{G}$ \ and a $ \mathbb{K}$-linear map\ $ \alpha:\mathcal{G}\rightarrow \mathcal{G} \ $satisfying
	\begin{eqnarray}
	&[x,y]=-[y,x],  \qquad &\text{(skew-symmetry) }\\
	&\circlearrowleft_{x,y,z}\big[\alpha(x),[y, z]\big]= 0 ,&\text{(Hom–Jacobi identity)}\label{jacobie} 
	\end{eqnarray}
	for all $x,\, y,\, z \in \mathcal{G}$. The two condition lead to the following identities. 
	\begin{gather}
	\left[\alpha(x),[y,z]\right]=	\left[ [x,y],\alpha(z)\right] +  \left[\alpha(y), [x,z]\right],\label{leftLeibniz}\\
	\left[\alpha(x),[y,z]\right]=	\left[ [x,y],\alpha(z)\right] - \left[ [x,z],\alpha(y)\right]\label{rigtLeibniz} 
	\end{gather}   for all $  x,\ y,\ z\in \mathcal{G}.$
\end{defn}
Now, we define  left and right Hom-Leibniz algebras. 
\begin{defn}
	
	A left (resp. right) Hom-Leibniz algebra is a $\K$-vector space $L$ equipped with a bracket operation $[\cdot,\cdot]$ and
	a linear map $\alpha$ that satisfy the equation \eqref{leftLeibniz} (resp. \eqref{rigtLeibniz}).
\end{defn}
Obviously, a Hom-Lie algebra is a  left and right Hom-Leibniz algebra. If $\alpha=id_L$, 
then  a left (resp. right) Hom-Leibniz algebra becomes  a left (resp. right) Leibniz algebra. A left (resp. right) Hom-Leibniz algebra is a Hom-Lie algebra if and only if $ [x,x]=0$, $\forall x\in L$.

\begin{defn}
	A  triple $(L,[\cdot,\cdot],\alpha)$ is called a symmetric
	Hom-Leibniz algebra if it is a left and a right Hom-Leibniz algebra.
\end{defn}
\begin{example}
	Let $ (e_1,e_2) $ be 	a basis of vector space $ L $  and $ \begin{pmatrix}
	-1&0\\
	0&1      
	\end{pmatrix} $ a matrix of linear map $\alpha$  with respect to this basis.    
	In the following table we give all possible cases for $ [\cdot,\cdot] $ to be a left Hom-Leibniz algebra\\
	
	
\footnotesize{
	\begin{tabular}{|c|c|}
		\hline
		Left Hom-Leibniz bracket & Remark  \\
		\hline
		$[e_1,e_1]=x e_2, \,	[e_1,e_2]=y e_2 ,\, [e_2,e_1]=[e_2,e_2]=0$ &      $ L$ is  multiplicative $\iff$  $y=0 $        \\
		&$ L$ is  symmetric $\iff$  $xy=0 $  \\
		\cline{2-1}
		\hline
		
		
		$[e_1,e_1]=[e_1,e_2]=0$,$\,	[e_2,e_1]=c e_1,\,  
		[e_2,e_2]=d e_1 \,$
		&      $ L$ is  multiplicative $\iff$  $c=0 $               \\	
			&$ L$ is  symmetric $\iff$  $c=0 $  \\
		\cline{2-1}	
		\hline
		
		$[e_1,e_1]=ae_1+xe_2, \,
		[e_1,e_2]=[e_2,e_1]=-\frac{  a}{x}[e_1,e_1]
		\,$,&$ L$ is  multiplicative $\iff$  $a=0 $\\ $\,
		[e_2,e_2]=\left( \frac{  a}{x}\right) ^{2}[e_1,e_1]
		\,$	
		&$ L$ is  symmetric \\
			\cline{2-1}	
		\hline
		
		
		
		
		\hline
		
		
		
		
		
		
		
		$[e_1,e_1]=[e_2,e_2]=0,\,      [e_1,e_2]=-[e_2,e_1]=be_1+y  e_2$	&$ L$ is  multiplicative $\iff$  $y=0 $  \\

		 & $ L$ is  a  Hom-Lie algebra\\
			\cline{2-1}
		\hline		
	\end{tabular}}      	            
\end{example}

\begin{example}
	Let $ (x_1,x_2,x_3) $ be a basis of $3$-dimensional space $  \mathcal{G}$ over $ \K $.
	Define a bilinear bracket operation on $\mathcal{G}\otimes \mathcal{G}  $ by 
	\begin{align*}
	[x_1\otimes x_3,x_1\otimes x_3]&=  x_1\otimes x_1\\	[x_2\otimes x_3,x_1\otimes x_3]&=  x_2\otimes x_1\\
	[x_2\otimes x_3,x_2\otimes x_3]&=  x_2\otimes x_2.
	\end{align*}
	The others brackets  are equal to $ 0 $. For any linear map $\alpha$ on $ \mathcal{G} $, 
	the triple $\left( \mathcal{G}\otimes \mathcal{G},[\cdot,\cdot],\alpha\otimes\alpha\right)   $ is not 
	a Hom-Lie algebra	but it is 
	a   symmetric Hom-Leibniz algebra.
\end{example}
In the following examples, we construct Hom-Leibniz algebras on a vector space $ L\otimes L$ starting from a Lie  or a Hom-Lie algebra $ L$. 


\begin{prop}\cite{TensorPowe}\label{tensorLie    }
	For any Lie algebra $(\mathcal{G}, [\cdot,\cdot])$, the bracket	 \[[x\otimes y,a\otimes b]=[x,[a,b]]\otimes y+x\otimes[y,[a,b]]\] defines a Leibniz algebra structure on the vector space $\mathcal{G}\otimes\mathcal{G}$.     
\end{prop}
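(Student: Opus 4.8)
The plan is to verify the right Leibniz identity \eqref{right1} on $\mathcal{G}\otimes\mathcal{G}$; by the remark following \eqref{right1}, this amounts to showing that for every $A\in\mathcal{G}\otimes\mathcal{G}$ the right multiplication operator $R_A\colon X\mapsto[X,A]$ is a derivation of the tensor bracket. (One checks on a concrete example such as $\mathfrak{sl}_2$ that the left identity \eqref{equtionL} fails, so the construction is genuinely a right Leibniz algebra, not a Lie or symmetric one.)

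The first step is to put the bracket in operator form. Write $\pi\colon\mathcal{G}\otimes\mathcal{G}\to\mathcal{G}$ for the linear map $\pi(a\otimes b)=[a,b]$ and $\delta_c\colon\mathcal{G}\to\mathcal{G}$ for the right multiplication $\delta_c(x)=[x,c]$ in $\mathcal{G}$; for a linear map $D$ on $\mathcal{G}$ abbreviate $\widehat{D}:=D\otimes\mathrm{id}+\mathrm{id}\otimes D$. Straight from the definition of the bracket one gets
\begin{equation*}
[x\otimes y,\,a\otimes b]=\big(\delta_{\pi(a\otimes b)}\otimes\mathrm{id}+\mathrm{id}\otimes\delta_{\pi(a\otimes b)}\big)(x\otimes y),
\end{equation*}
so that $R_A=\widehat{\delta_{\pi(A)}}$, i.e. right multiplication depends on $A$ only through the element $\pi(A)\in\mathcal{G}$ and acts on the left-hand tensor factors.

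The key step is the following lemma, whose only input is the Jacobi identity of $\mathcal{G}$: if $D$ is a derivation of $(\mathcal{G},[\cdot,\cdot])$, then $\widehat{D}$ is a derivation of the tensor bracket. To prove it I would expand $\widehat{D}\big([u_1\otimes u_2,\,v_1\otimes v_2]\big)$ and $[\widehat{D}(u_1\otimes u_2),\,v_1\otimes v_2]+[u_1\otimes u_2,\,\widehat{D}(v_1\otimes v_2)]$ using the bracket formula and match them factor by factor. After the two terms with identical outer tensor slots cancel, the remaining matchings reduce precisely to the Leibniz rule $D[u,w]=[Du,w]+[u,Dw]$ for $D$ on $\mathcal{G}$, together with the intertwining relation $\pi\circ\widehat{D}=D\circ\pi$, which is itself just $D[v_1,v_2]=[Dv_1,v_2]+[v_1,Dv_2]$.

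Finally I would assemble the pieces. For every $A=a\otimes b$ the map $\delta_{\pi(A)}=[\,\cdot\,,[a,b]]$ is a derivation of $\mathcal{G}$, since a Lie algebra is in particular a right Leibniz algebra (the identity $[[u,v],c]=[[u,c],v]+[u,[v,c]]$ is exactly the statement that $\delta_c$ is a derivation). By the lemma, $R_A=\widehat{\delta_{\pi(A)}}$ is therefore a derivation of the tensor bracket, which is precisely the right Leibniz identity \eqref{right1}, and the proof is complete. The main obstacle is organizational rather than conceptual: one must keep careful track of which factor each $\delta$ acts on when expanding the tensor brackets in the lemma, and resist expecting skew-symmetry or the left identity \eqref{equtionL}, both of which fail here.
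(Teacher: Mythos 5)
Your proof is correct, and there is in fact nothing internal to compare it against: the paper offers no proof of this proposition, quoting it directly from \cite{TensorPowe}, so your write-up supplies exactly the verification the citation leaves implicit. Your decomposition is the natural one: the bracket reads $[X,A]=\widehat{\delta_{\pi(A)}}(X)$ with $\widehat{D}=D\otimes\mathrm{id}+\mathrm{id}\otimes D$, so right multiplications depend on $A$ only through $\pi(A)$, and everything reduces to your lemma that $\widehat{D}$ is a derivation of the tensor bracket whenever $D$ is a derivation of $\mathcal{G}$. One streamlining you may want: instead of matching terms factor by factor, note that the cross terms cancel in the operator commutator, giving $[\widehat{A},\widehat{B}]=\widehat{[A,B]}$ for any $A,B\in End(\mathcal{G})$; since the derivation property of $D$ is precisely $[D,\delta_c]=\delta_{Dc}$, and $\pi\circ\widehat{D}=D\circ\pi$, the identity $\widehat{D}[U,V]=[\widehat{D}U,V]+[U,\widehat{D}V]$ drops out in one line as $\widehat{D}\circ\widehat{\delta_c}=\widehat{\delta_c}\circ\widehat{D}+\widehat{\delta_{Dc}}$ with $c=\pi(V)$. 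Your reading of ``Leibniz'' as the right identity \eqref{right1} is the intended one — the paper's own usage confirms this, since the subsequent proposition built on this result produces a \emph{right} Hom-Leibniz algebra on $\mathcal{G}\otimes\mathcal{G}$ — and your side remark that \eqref{equtionL} genuinely fails is also easily certified: taking $B=e\otimes e$ in $\mathfrak{sl}_2$ gives $\pi(B)=0$, so $[A,[B,C]]$ and $[[A,B],C]$ vanish while $[B,[A,C]]=\widehat{\delta_{[\pi(A),\pi(C)]}}(B)$ need not.
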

\begin{prop}\cite{Yau}\label{tensorLeibn    }
	Let $(\mathcal{L},[\cdot,\cdot] )$ be a Leibniz algebra and $ \alpha : \mathcal{L}\to  \mathcal{L}$  be a Leibniz algebra endomorphism. Then  $(\mathcal{L},[\cdot,\cdot]_{\alpha},\alpha )$ is a Hom-Leibniz algebra, where $[x,y]_{\alpha}=[\alpha(x),\alpha(y)]  $.       
\end{prop}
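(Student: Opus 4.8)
The plan is to verify directly that the twisted bracket $[x,y]_\alpha=[\alpha(x),\alpha(y)]$ together with the twisting map $\alpha$ satisfies the left Hom-Leibniz identity \eqref{leftLeibniz} (and, when $\mathcal{L}$ is a right, resp.\ symmetric, Leibniz algebra, the right identity \eqref{rigtLeibniz}). The whole argument rests on a single structural fact: since $\alpha$ is a Leibniz algebra endomorphism it commutes with the bracket, $\alpha([u,v])=[\alpha(u),\alpha(v)]$, so whenever $\alpha$ meets a twisted bracket it can be pushed inward and absorbed into the arguments.

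First I would expand the left-hand side of \eqref{leftLeibniz} for the twisted structure. Writing out $[\alpha(x),[y,z]_\alpha]_\alpha$ and repeatedly using multiplicativity of $\alpha$, every term collapses to the single expression $[\alpha^2(x),[\alpha^2(y),\alpha^2(z)]]$. Next I would do the same bookkeeping on the right-hand side: the two summands $[[x,y]_\alpha,\alpha(z)]_\alpha$ and $[\alpha(y),[x,z]_\alpha]_\alpha$ become $[[\alpha^2(x),\alpha^2(y)],\alpha^2(z)]$ and $[\alpha^2(y),[\alpha^2(x),\alpha^2(z)]]$ respectively. At this point the desired identity \eqref{leftLeibniz} for $[\cdot,\cdot]_\alpha$ is exactly the plain left Leibniz identity \eqref{equtionL} read off at the triple $(\alpha^2(x),\alpha^2(y),\alpha^2(z))$, which holds because $\mathcal{L}$ is a Leibniz algebra; this closes the left case.

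For the right Hom-Leibniz identity I would run the identical substitution on \eqref{rigtLeibniz}: both sides again reduce to expressions in $\alpha^2(x),\alpha^2(y),\alpha^2(z)$, and the resulting equality is precisely the right Leibniz identity \eqref{right1} evaluated on that same triple. Combining the two cases handles the symmetric Leibniz algebra. Strictly speaking one must also check that $\alpha$ is multiplicative for the new bracket, i.e.\ $\alpha([x,y]_\alpha)=[\alpha(x),\alpha(y)]_\alpha$, but this is immediate from $\alpha$ being an endomorphism of $(\mathcal{L},[\cdot,\cdot])$.

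There is no genuine obstacle here: the computation is entirely mechanical once one notices that the role of the hypothesis \emph{``$\alpha$ is an endomorphism''} (as opposed to merely linear) is exactly to let $\alpha$ slide through brackets and turn every term into $\alpha^2$ applied to the original arguments. The only point requiring care is bookkeeping---keeping track of how many copies of $\alpha$ accumulate on each slot so that both sides land on the same triple $(\alpha^2(x),\alpha^2(y),\alpha^2(z))$ before the underlying Leibniz identity is invoked.
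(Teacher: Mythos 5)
Your verification is correct: pushing $\alpha$ through the twisted brackets collapses every term, so both the left and right Hom-Leibniz identities for $[\cdot,\cdot]_\alpha$ reduce to the corresponding plain Leibniz identity evaluated at the triple $(\alpha^2(x),\alpha^2(y),\alpha^2(z))$, which holds by hypothesis. The paper offers no proof of this proposition---it is quoted from \cite{Yau}---and your direct computation (including the routine check that $\alpha$ remains multiplicative for the twisted bracket) is exactly the standard argument behind that citation, so your approach matches the intended one.
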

Using \textbf{Proposition } \ref{tensorLeibn    } and  \textbf{Proposition } \ref{tensorLie    }, we obtain the following result.
\begin{prop}
	Let $(\mathcal{G},[\cdot,\cdot]') $ be a Lie algebra and $ \alpha : \mathcal{G}\to  \mathcal{G}$  be a Lie algebra endomorphism. We define on $\mathcal{G}\otimes\mathcal{G}$ the following bracket
	\[ \left[ x\otimes y,a\otimes b\right] =\left[ \alpha(x),[\alpha(a),\alpha(b)]']'\otimes \alpha(y)+\alpha(x)\otimes[\alpha(y),[\alpha(a),\alpha(b)]'\right]'  \] 	on $\mathcal{G}\otimes \mathcal{G}.$ Then
	$ \left(\mathcal{G}\otimes \mathcal{G},[\cdot,\cdot],\alpha \right)  $
	is a right  Hom-Leibniz algebra. 
\end{prop}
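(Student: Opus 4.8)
The plan is to obtain the stated bracket as a Yau twist of the untwisted tensor construction of Proposition~\ref{tensorLie    }, and then to read off the Hom-structure from Proposition~\ref{tensorLeibn    }, tracking the left/right handedness at each step. Throughout write $V=\mathcal{G}\otimes\mathcal{G}$ and let $B$ be the bracket of Proposition~\ref{tensorLie    } built from the Lie bracket $[\cdot,\cdot]'$, i.e. $B(x\otimes y,a\otimes b)=[x,[a,b]']'\otimes y+x\otimes[y,[a,b]']'$. The symbol $\alpha$ appearing in the triple of the statement is understood as the induced endomorphism $\alpha\otimes\alpha$ of $V$.

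First I would pin down that $(V,B)$ is specifically a \emph{right} Leibniz algebra (Proposition~\ref{tensorLie    } only asserts ``Leibniz''). For this, introduce for a derivation $\phi$ of $\mathcal{G}$ the operator $\Delta(\phi)=\phi\otimes\mathrm{id}+\mathrm{id}\otimes\phi$ on $V$, and note that the right multiplication operator factors as $R_{a\otimes b}=-\Delta(\mathrm{ad}_{[a,b]'})$, using $[x,w]'=-\mathrm{ad}_w(x)$. A short computation shows that $\Delta(\phi)$ is a derivation of $B$ whenever $\phi$ is a derivation of $(\mathcal{G},[\cdot,\cdot]')$ (expand $\Delta(\phi)B(u,v)$ and use the Leibniz rule for $\phi$ on each inner bracket). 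Since each $\mathrm{ad}_{[a,b]'}$ is an inner derivation, every $R_{a\otimes b}$ is a derivation of $B$, which is exactly the right Leibniz identity~\eqref{right1} for $V$.

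Next I would check that $\alpha\otimes\alpha$ is an endomorphism of $(V,B)$: since $\alpha$ is a Lie endomorphism it commutes with every occurrence of $[\cdot,\cdot]'$, giving $(\alpha\otimes\alpha)B(u,v)=B((\alpha\otimes\alpha)u,(\alpha\otimes\alpha)v)$. Proposition~\ref{tensorLeibn    } then applies to $(V,B,\alpha\otimes\alpha)$ and yields a Hom-Leibniz algebra $(V,B_{\alpha\otimes\alpha},\alpha\otimes\alpha)$ with $B_{\alpha\otimes\alpha}(u,v)=B((\alpha\otimes\alpha)u,(\alpha\otimes\alpha)v)$; evaluating this on $u=x\otimes y$, $v=a\otimes b$ and using that $\alpha$ is an endomorphism recovers exactly the bracket displayed in the statement, so the two brackets coincide.

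The only point genuinely requiring a computation rather than a citation is that the Yau twist preserves the right handedness, i.e. that twisting a right Leibniz algebra gives a right Hom-Leibniz algebra satisfying~\eqref{rigtLeibniz} and not~\eqref{leftLeibniz}. I would verify this by expanding both sides of~\eqref{rigtLeibniz} for the bracket $B_{\alpha\otimes\alpha}$ and the twisting map $\alpha\otimes\alpha$: each term reduces to an ordinary $B$-bracket evaluated on doubly twisted arguments, and the right Leibniz identity~\eqref{right1} in $V$ applied to those arguments equates the two sides. I expect this handedness bookkeeping to be the main (and essentially the only) obstacle; everything else is the two cited propositions together with the remark that $\alpha\otimes\alpha$ is an algebra endomorphism. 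Combining the three steps gives that $(\mathcal{G}\otimes\mathcal{G},[\cdot,\cdot],\alpha\otimes\alpha)$ is a right Hom-Leibniz algebra, as claimed.
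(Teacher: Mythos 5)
Your proposal is correct and follows essentially the same route as the paper, whose entire proof consists of composing the tensor-square Leibniz structure of Kurdiani--Pirashvili with Yau's twisting construction (the paper literally states the result ``using'' those two propositions, with no further argument). The only difference is that you make explicit the bookkeeping the paper leaves implicit --- that the tensor-square bracket is right Leibniz via the factorization $R_{a\otimes b}=-\Delta(\mathrm{ad}_{[a,b]'})$, that $\alpha\otimes\alpha$ is an endomorphism of it, and that the Yau twist of a right Leibniz algebra satisfies the right Hom-Leibniz identity --- and all of these checks are accurate.
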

\subsection{ Centroids and derivations of  Hom-Leibniz superalgebra    } 
The concept of centroids and derivation of Leibniz algebras  is introduced in  \cite{centroiide}. Left  Leibniz superalgebras, originally were
introduced by Dzhumadil’daev in \cite{colourSuper}, can be seen as a direct generalization of Leibniz
algebras. The left Hom-Leibniz superalgebras  is introduced in \cite{HomSuperLeibniz}.
In this section, we introduce the notion of right and symmetric Hom-Leibniz superalgebras. Moreover, we introduce the  concept of centroids and derivation of Hom-Leibniz superalgebras .\\

Let  $V$ be a vector superspace over a field $\mathbb{K}$ that is a $\mathbb{Z}_{2}$-graded vector  space with a direct sum $V=V_{0}\oplus V_{1}.$  The elements of $V_{j}$, $j\in \mathbb{Z}_{2},$ are said to be homogenous  of parity $j.$ The parity of  a homogeneous element $x$ is denoted by $|x|.$ 
The space $End (V)$ is $\mathbb{Z}_{2}$-graded with a direct sum $End (V)=(End (V))_{0}\oplus(End (V))_{1}$ where
$(End (V))_{j}=\{f\in End (V) \mid f (V_{i})\subset V_{i+j}\}.$
The elements of $(End (V))_{j}$  are said to be homogenous of parity $j.$
\begin{defn}(see\cite{Ammar,Najib}    )
	A Hom-Lie superalgebra  is a triple $(\mathcal{G},\ [\cdot,\cdot],\ \alpha)$\ consisting of a superspace $\mathcal{G}$, an even bilinear map \ $\ [\cdot,\cdot]:\mathcal{G}\times \mathcal{G}\rightarrow \mathcal{G}$ \ and an even superspace homomorphism \ $ \alpha:\mathcal{G}\rightarrow \mathcal{G} \ $satisfying
	\begin{eqnarray*}
		&&[x,y]=-(-1)^{|x||y|}[y,x],\\
		&&\circlearrowleft_{x,y,z}(-1)^{|x||z|}\big[\alpha(x),[y, z]\big]= 0 \label{jacobie}
	\end{eqnarray*}
	for all homogeneous element x, y, z in $\mathcal{G}$.
\end{defn}
\begin{prop}
	Let $L$ be a   superspace
	and define the following vector subspace  $\Omega $  of $ End(L) $ consisting
	of linear maps $u$ on $L$ as follows:	
	\[ \Omega =\{ u\in End(L)\mid u\circ\alpha=\alpha \circ u    \}.\]	
	and the map \[\tilde{\alpha} \colon  \Omega  \to     \Omega;\quad \tilde{\alpha}(u) =\alpha \circ u.       \]	
	Then $ (\Omega,[\cdot,\cdot ]' ) $ (resp. $ (\Omega,[\cdot,\cdot ]', \tilde{\alpha})  $ ) is a Lie (resp. Hom-Lie) superalgebra with the bracket $ [u,v]'=uv-(-1)^{|u||v|} vu $ for all $ u,\, v \in \Omega. $	
\end{prop}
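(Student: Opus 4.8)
The plan is to realise $(\Omega,[\cdot,\cdot]')$ as a sub-superalgebra of the associative superalgebra $(\mathrm{End}(L),\circ)$ under its supercommutator, and then to extract the Hom-structure from a single factoring identity rather than from Yau's twisting principle. I assume, as in the Hom-Lie superalgebra definition above, that $\alpha$ is even; then the defining relation $u\circ\alpha=\alpha\circ u$ respects parity, so $\Omega=\Omega_0\oplus\Omega_1$ with $\Omega_j=\Omega\cap(\mathrm{End}(L))_j$ is a $\Z$-graded (indeed $\Z_2$-graded) subspace. It is classical that $(\mathrm{End}(L),[\cdot,\cdot]')$ is a Lie superalgebra: skew-supersymmetry $[u,v]'=-(-1)^{|u||v|}[v,u]'$ is read off directly from the sign, and the super Jacobi identity is the standard expansion of the supercommutator of an associative superalgebra. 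The first substantive step is therefore to check that $\Omega$ is closed: for $u,v\in\Omega$ one has $uv\alpha=u\alpha v=\alpha uv$ and likewise $vu\alpha=\alpha vu$, whence $[u,v]'\circ\alpha=\alpha\circ[u,v]'$, i.e. $[u,v]'\in\Omega$. This already yields that $(\Omega,[\cdot,\cdot]')$ is a Lie superalgebra.

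Next I would verify that $\tilde\alpha$ is admissible: it is linear, and it is even since $|\alpha u|=|u|$ for $\alpha$ even. Moreover it maps $\Omega$ into $\Omega$, because $u\circ\alpha=\alpha\circ u$ gives $(\alpha u)\circ\alpha=\alpha\circ(\alpha u)$, so $\tilde\alpha(u)=\alpha\circ u$ again commutes with $\alpha$. Thus $\tilde\alpha$ is a well-defined even twisting map on $\Omega$.

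The main point, and the only genuine obstacle, is the Hom-Jacobi identity. The naive route via the twisting principle fails here, because $\tilde\alpha$ is \emph{not} an endomorphism of the Lie superalgebra $\Omega$: one has $\tilde\alpha([u,v]')=\alpha[u,v]'$ whereas $[\tilde\alpha(u),\tilde\alpha(v)]'=\alpha^2[u,v]'$. The substitute is the factoring identity
\[ [\tilde\alpha(u),x]'=\tilde\alpha([u,x]')\qquad(u,x\in\Omega), \]
which holds precisely because every $x\in\Omega$ commutes with $\alpha$: expanding, $[\alpha u,x]'=\alpha ux-(-1)^{|u||x|}x\alpha u=\alpha ux-(-1)^{|u||x|}\alpha xu=\alpha[u,x]'$. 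Applying this with $x=[v,w]'\in\Omega$ and using linearity of $\tilde\alpha$, the Hom-Jacobi sum collapses to
\[ \circlearrowleft_{u,v,w}(-1)^{|u||w|}\big[\tilde\alpha(u),[v,w]'\big]'=\tilde\alpha\Big(\circlearrowleft_{u,v,w}(-1)^{|u||w|}\big[u,[v,w]'\big]'\Big)=\tilde\alpha(0)=0, \]
the inner cyclic sum vanishing by the super Jacobi identity already established for $(\Omega,[\cdot,\cdot]')$. Together with skew-supersymmetry this establishes all the Hom-Lie superalgebra axioms for $(\Omega,[\cdot,\cdot]',\tilde\alpha)$. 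Apart from recognising that the twisting principle is unavailable and replacing it by the factoring identity, every step is a short sign-bookkeeping computation.
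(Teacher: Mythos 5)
Your proof is correct. Note that the paper states this proposition without giving any proof at all, so there is nothing to match step for step; your argument supplies the missing verification in what is surely the intended way: closure of $\Omega$ under the supercommutator of the associative superalgebra $(\mathrm{End}(L),\circ)$, well-definedness and evenness of $\tilde\alpha$, and the Hom-Jacobi identity. Your key observation is exactly right and worth emphasizing: one cannot invoke Yau's twisting principle here, since the bracket on $\Omega$ is the \emph{untwisted} supercommutator and $\tilde\alpha$ fails to be a morphism of $(\Omega,[\cdot,\cdot]')$ (indeed $\tilde\alpha([u,v]')=\alpha[u,v]'$ while $[\tilde\alpha(u),\tilde\alpha(v)]'=\alpha^2[u,v]'$), which is precisely consistent with the paper's remark immediately following the proposition that $(\Omega,[\cdot,\cdot]',\tilde\alpha)$ is not necessarily multiplicative. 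Your substitute, the factoring identity $[\tilde\alpha(u),x]'=\tilde\alpha([u,x]')$ valid because every $x\in\Omega$ commutes with $\alpha$ and $\alpha$ is even, collapses the Hom-Jacobi cyclic sum to $\tilde\alpha$ applied to the ordinary super Jacobi sum, which vanishes; all sign bookkeeping checks out, and your explicit flagging of the standing assumption that $\alpha$ is even (implicit in the paper's context, where $\alpha$ is the structure map of a Hom-Leibniz superalgebra) closes the one gap the paper's terse statement leaves open.
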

\begin{remq}
	  The Hom-Lie algebras $ (\Omega,[\cdot,\cdot ]', \tilde{\alpha})  $ is not necessarily multiplicative.            
\end{remq}
\begin{defn}(see\cite{Najib}    )
Let $(\mathcal{G},[\cdot,\cdot],\alpha)$ be a Hom-Lie superalgebra and $V=V_{0}\oplus V_{1}$ be an arbitrary vector superspace. Let $\beta\in\mathcal{G}l(V)$ be an arbitrary even linear self-map on $V$  and
$[\cdot,\cdot]_{V}\colon \mathcal{G}\times V \to V$
an even bilinear map.  
The triple $(V,[\cdot,\cdot]_{V}, \beta)$  is called a 
module on the Hom-Lie superalgebra $\mathcal{G}=\mathcal{G}_{0}\oplus \mathcal{G}_{1}$   if the  even bilinear map $[\cdot,\cdot]_{V}$ satisfies
\begin{eqnarray}
\left[[x,y],\beta(v)\right]_{V}&=&\left[\alpha(x),[y,v]_{V}\right]_{V}-(-1)^{|x||y|}\left[\alpha(y),[x,v]_{V}\right]_{V} \label{Rep2}
\label{mod}
\end{eqnarray}
for all homogeneous elements $x, y \in \mathcal{G}$ and $v\in  V .$\\

\end{defn}
\begin{defn}
	Let  $(\mathcal{L},[\cdot,\cdot],\alpha) $ be a triple consisting of a  superspace $ \mathcal{L} $, an even  bilinear map $ [\cdot,\cdot]\, :\mathcal{L}\times  \mathcal{L}\rightarrow  \mathcal{L}  $  and an even   superspace homomorphism $ \alpha : \mathcal{L}\rightarrow  \mathcal{L}  $. Then,
	$(\mathcal{L},[\cdot,\cdot],\alpha) $ is called :	
	\begin{enumerate}[(i)]
		\item a  left Hom-Leibniz superalgebra if it satisfies 
		\begin{equation*}
		\left[\alpha(x),[y,z]\right]=	\left[ [x,y],\alpha(z)\right] + (-1)^{|x||y|} \left[\alpha(y), [x,z]\right].
		\end{equation*}
		\item 	a right Hom-Leibniz superalgebra  if it satisfies 
		\begin{equation*}
		\left[\alpha(x),[y,z]\right]=	\left[ [x,y],\alpha(z)\right] -(-1)^{|y||z|} \left[ [x,z],\alpha(y)\right] \quad \forall\ x,\ y,\ z\in \mathcal{L}_{0}\cup \mathcal{L}_1.
		\end{equation*}
	\end{enumerate}
\end{defn}
The following proposition  provides a method to construct a left  Hom-Leibniz superalgebra by a   module of Hom-Lie superalgebra. 
\begin{prop}
	Let $(\mathcal{L},[\cdot,\cdot],\alpha) $ be a Hom-Lie superalgebra
	and  $(V,[\cdot,\cdot]_V,\beta) $ 
	a $\mathcal{L}$-module. Let $\varphi\colon V\to \mathcal{L}$  be an even linear map  satisfying 
	$\varphi([x,v]_{V})=[x,\varphi(v)]$
	  and $\varphi\circ \beta=\alpha\circ\varphi$. Then one can define a 
	 left  Hom-Leibniz superalgebra $(V,[\cdot,\cdot]',\beta) $ as follows: $[u,v]'=[\varphi(u),v]_V$. 
\end{prop}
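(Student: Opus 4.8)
The plan is to verify directly the defining identity of a left Hom-Leibniz superalgebra for the triple $(V,[\cdot,\cdot]',\beta)$. First I would record the routine preliminaries: the bracket $[u,v]'=[\varphi(u),v]_V$ is even because $\varphi$ and $[\cdot,\cdot]_V$ are even, and $\beta$ is an even self-map by assumption. Note that no multiplicativity of $\beta$ is required, since the definition of a left Hom-Leibniz superalgebra only imposes the left Leibniz super-identity. Thus the whole content is to establish, for homogeneous $u,v,w\in V$,
\[ [\beta(u),[v,w]']' = [[u,v]',\beta(w)]' + (-1)^{|u||v|}[\beta(v),[u,w]']'. \]

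Next I would expand the left-hand side. Unfolding the definition twice gives $[\beta(u),[v,w]']' = [\varphi(\beta(u)),[\varphi(v),w]_V]_V$, and the intertwining hypothesis $\varphi\circ\beta=\alpha\circ\varphi$ turns this into $[\alpha(\varphi(u)),[\varphi(v),w]_V]_V$. For the right-hand side I would treat the two summands separately. The equivariance $\varphi([x,v]_V)=[x,\varphi(v)]$, applied with $x=\varphi(u)$, yields $\varphi([u,v]')=\varphi([\varphi(u),v]_V)=[\varphi(u),\varphi(v)]$, so the first summand becomes $[[\varphi(u),\varphi(v)],\beta(w)]_V$; the intertwining relation applied inside the second summand gives $(-1)^{|u||v|}[\alpha(\varphi(v)),[\varphi(u),w]_V]_V$.

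Finally I would invoke the module axiom \eqref{Rep2} with $x=\varphi(u)$ and $y=\varphi(v)$ (and module argument $w$), which reads
\[ [\alpha(\varphi(u)),[\varphi(v),w]_V]_V = [[\varphi(u),\varphi(v)],\beta(w)]_V + (-1)^{|\varphi(u)||\varphi(v)|}[\alpha(\varphi(v)),[\varphi(u),w]_V]_V; \]
since $\varphi$ is even, $|\varphi(u)|=|u|$ and $|\varphi(v)|=|v|$, so this is precisely the equality between the reduced forms of the two sides obtained above, completing the verification. I do not expect a genuine obstacle here: the argument is a direct substitution, and the only point demanding care is the sign bookkeeping — in particular checking that the commutation factor $(-1)^{|\varphi(u)||\varphi(v)|}$ produced by the module axiom coincides with the factor $(-1)^{|u||v|}$ appearing in the target identity, which holds exactly because $\varphi$ preserves parity.
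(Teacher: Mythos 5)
Your proof is correct and is exactly the intended argument: the paper states this proposition without proof, and your direct verification --- unfolding $[\cdot,\cdot]'$, using $\varphi\circ\beta=\alpha\circ\varphi$ and the equivariance $\varphi([x,v]_V)=[x,\varphi(v)]$ to reduce the left Leibniz super-identity for $(V,[\cdot,\cdot]',\beta)$ to the module axiom \eqref{Rep2} with $x=\varphi(u)$, $y=\varphi(v)$ --- is the natural route, and you correctly note that no multiplicativity of $\beta$ is required by the paper's definition. The sign bookkeeping is also right, since evenness of $\varphi$ gives $(-1)^{|\varphi(u)||\varphi(v)|}=(-1)^{|u||v|}$.
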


Now, we define the symmetric	Hom-Leibniz superalgebra.
\begin{defn}
	If $(\mathcal{L},[\cdot,\cdot],\alpha) $  is a left and a right Leibniz algebra, then $\mathcal{L}$ is called a symmetric
	 Leibniz algebra.                       
\end{defn}

\begin{prop}
	A triple
	$(\mathcal{L},[\cdot,\cdot],\alpha) $ is a symmetric
	Hom-Leibniz superalgebra  if and only if 
	\begin{align*}
	\left[\alpha(x),[y,z]\right]&=	\left[ [x,y],\alpha(z)\right] + (-1)^{|x||y|} \left[\alpha(y), [x,z]\right];\\
	\left[\alpha(y), [x,z]\right]&=- (-1)^{\left( |x|+|z|\right) |y|} \left[ [x,z],\alpha(y)\right]    
	\end{align*}
	for all $ \ x,\ y,\ z\in \mathcal{L}_{0}\cup \mathcal{L}_1. $
\end{prop}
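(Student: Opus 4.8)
The plan is to unwind the definitions. By the preceding definition, $(\mathcal{L},[\cdot,\cdot],\alpha)$ is a symmetric Hom-Leibniz superalgebra exactly when it is simultaneously a left Hom-Leibniz superalgebra and a right Hom-Leibniz superalgebra, i.e. when both
\[
\left[\alpha(x),[y,z]\right]=\left[[x,y],\alpha(z)\right]+(-1)^{|x||y|}\left[\alpha(y),[x,z]\right]\qquad(\mathrm{L})
\]
and
\[
\left[\alpha(x),[y,z]\right]=\left[[x,y],\alpha(z)\right]-(-1)^{|y||z|}\left[[x,z],\alpha(y)\right]\qquad(\mathrm{R})
\]
hold for all homogeneous $x,y,z$. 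Since (L) is literally the first displayed identity in the statement, the whole proposition reduces to showing that, in the presence of (L), the right identity (R) is equivalent to the claimed supersymmetry relation. I would therefore fix homogeneous elements $x,y,z\in\mathcal{L}_0\cup\mathcal{L}_1$ and argue with these three identities only.

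For the forward implication I assume both (L) and (R). Their right-hand sides share the common term $[[x,y],\alpha(z)]$, so subtracting (R) from (L) cancels it and the equal left-hand sides cancel as well, leaving
\[
0=(-1)^{|x||y|}\left[\alpha(y),[x,z]\right]+(-1)^{|y||z|}\left[[x,z],\alpha(y)\right].
\]
Multiplying through by $(-1)^{|x||y|}$, using $(-1)^{2|x||y|}=1$, and combining the exponents via $|y||z|+|x||y|=(|x|+|z|)|y|$ yields precisely
\[
\left[\alpha(y),[x,z]\right]=-(-1)^{(|x|+|z|)|y|}\left[[x,z],\alpha(y)\right],
\]
which is the desired supersymmetry condition.

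For the converse I assume (L) together with this supersymmetry relation and recover (R) by reversing the computation. Starting from the supersymmetry relation and multiplying by $(-1)^{|x||y|}$, I simplify the exponent using $|x||y|+(|x|+|z|)|y|\equiv|y||z|\pmod 2$, so that $(-1)^{|x||y|}[\alpha(y),[x,z]]=-(-1)^{|y||z|}[[x,z],\alpha(y)]$. Substituting this expression into the last term of (L) turns (L) into (R), completing the equivalence.

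The whole argument is a direct bidirectional manipulation, so there is no genuine conceptual obstacle; the only point requiring care will be the bookkeeping of the Koszul signs — specifically verifying the two exponent congruences modulo $2$ and keeping track of the fact that squaring a sign $(-1)^{|x||y|}$ returns $1$.
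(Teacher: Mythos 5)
Your proof is correct, and the sign bookkeeping checks out: subtracting the right identity from the left one cancels both the common term $\left[[x,y],\alpha(z)\right]$ and the equal left-hand sides, and the two exponent congruences $|x||y|+|y||z|=(|x|+|z|)|y|$ and $|x||y|+(|x|+|z|)|y|\equiv|y||z| \pmod 2$ are exactly what is needed in the two directions. The paper states this proposition without any proof, so there is nothing to compare against; your direct bidirectional manipulation is evidently the routine verification the author intended and fills that gap correctly.
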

\begin{example}
	Let $\mathcal{L}=\mathcal{L}_{0}\oplus \mathcal{L}_{1}$  be a $3$-dimensional superspace, where $\mathcal{L}_{0}$ is generated by $e_1$, $e_2$ and $\mathcal{L}_{1}$ is generated by $e_3$. The product is given by
	\begin{gather*}
	[e_1,e_1] =ae_1+xe_2;\,
	[e_1,e_2]=[e_2,e_1]=-\frac{a}{x}[e_1,e_1];\,
	[e_2,e_2]=\left(\frac{a}{x}\right)^{2}[e_1,e_1];\\
	[e_3,e_3] =\frac{d}{x}  [e_1,e_1];\,[e_1,e_3] =[e_3,e_1]=[e_3,e_2] =    [e_2,e_3] =0  .
	\end{gather*}
	We consider the homomorphism  $\alpha \colon \mathcal{L}\to  \mathcal{L} $ defined by the matrix $ \begin{pmatrix}
	-1&0&0\\
	0&1&0\\
	0&0&\mu
	\end{pmatrix} $,with respect to basis $ (e_1,e_2,e_3)  $. Then $(\mathcal{L},[\cdot,\cdot],\alpha) $ is a symmetric
	Hom-Leibniz superalgebra.
\end{example}

\begin{defn}
A $\alpha^k$-derivation of a  Hom-Leibniz superalgebra $(\mathcal{L},[\cdot,\cdot],\alpha) $ is a  homogeneous linear map  $D\in \Omega$ satisfying 
\[           D([x,y])=[D(x),\alpha^{k}(y)]+(-1)^{|D||x|}[\alpha^{k}(x),D(y)]                                \]
for all $ \ x,\ y,\ z\in \mathcal{L}_{0}\cup \mathcal{L}_1. $ The set of all derivation of a Hom-Leibniz superalgebra $\mathcal{L}$ is denoted by $\displaystyle Der(L)=\bigoplus_{k\geq0}Der_{\alpha^{k}}(L)  $.
\end{defn} 
\begin{prop}
	Let  $(\mathcal{L},[\cdot,\cdot ],\alpha)$ be a left  (resp. right) Hom-Leibniz   superalgebra. For any $ a\in L $ satisfying $ \alpha(a)=a, $ define $ ad_{k}(a)\in End(L) $ (resp.  $ Ad_{k}(a)\in End(L) $) 	
	\[ad_{k}(a)(x)=[a,\alpha^{k}(x)] \]
	respectively 
	\[   Ad_{k}(a)(x)=(-1)^{|a||x|}[\alpha^{k}(x),a]  , \quad \forall x\in L.    \]	
	Then $ ad_{k}(a) $ (resp. $ Ad_{k}(a) $) is an $ \alpha^{k+1} $-derivation of the 
	left (resp. right) Hom-Leibniz algebra $ L $.
\end{prop}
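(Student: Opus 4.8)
The statement packages two claims for each case: that the map belongs to $\Omega$ (i.e.\ commutes with $\alpha$), and that it satisfies the defining identity of an $\alpha^{k+1}$-derivation. The plan is to dispose of membership in $\Omega$ first and then verify the derivation identity by a single application of the appropriate Leibniz axiom. I would record at the outset that $ad_k(a)$ and $Ad_k(a)$ are homogeneous of parity $|a|$, since $[a,\alpha^k(x)]$ and $[\alpha^k(x),a]$ both have parity $|a|+|x|$, and that I will use multiplicativity of $\alpha$ throughout, so that $\alpha^k([x,y])=[\alpha^k(x),\alpha^k(y)]$. For membership in $\Omega$: computing $ad_k(a)(\alpha(x))=[a,\alpha^{k+1}(x)]$ and $\alpha(ad_k(a)(x))=[\alpha(a),\alpha^{k+1}(x)]$ and then using $\alpha(a)=a$ shows they agree; the same computation with the bracket entries swapped handles $Ad_k(a)$.

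For the left case I expect the derivation identity to fall out directly from the left Hom-Leibniz axiom. Starting from $ad_k(a)([x,y])=[a,[\alpha^k(x),\alpha^k(y)]]$, I would replace the leading $a$ by $\alpha(a)$ (legitimate since $\alpha(a)=a$) and apply the left Leibniz identity with the substitution $x\mapsto a,\ y\mapsto\alpha^k(x),\ z\mapsto\alpha^k(y)$. This produces
\[ [[a,\alpha^k(x)],\alpha^{k+1}(y)]+(-1)^{|a||x|}[\alpha^{k+1}(x),[a,\alpha^k(y)]], \]
which, after recognizing $[a,\alpha^k(x)]=ad_k(a)(x)$ and $[a,\alpha^k(y)]=ad_k(a)(y)$ and using $|\alpha^k(x)|=|x|$, is exactly the $\alpha^{k+1}$-derivation identity for a map of parity $|a|$. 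No signs need manipulating here, so this case is essentially immediate.

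For the right case the strategy is the same but the sign bookkeeping is the genuine content. I would begin with $Ad_k(a)([x,y])=(-1)^{|a|(|x|+|y|)}[[\alpha^k(x),\alpha^k(y)],a]$, rewrite $a=\alpha(a)$, and apply the right Leibniz axiom in the rearranged form $[[X,Y],\alpha(Z)]=[\alpha(X),[Y,Z]]+(-1)^{|Y||Z|}[[X,Z],\alpha(Y)]$ with $X=\alpha^k(x),\ Y=\alpha^k(y),\ Z=a$. Converting the inner brackets $[\alpha^k(x),a]$ and $[\alpha^k(y),a]$ back into $Ad_k(a)(x)$ and $Ad_k(a)(y)$ introduces extra factors $(-1)^{|a||x|}$ and $(-1)^{|a||y|}$; collecting every exponent and using that $2|a||x|$ and $2|a||y|$ are even, the outer prefactor $(-1)^{|a|(|x|+|y|)}$ should cancel against the accumulated signs to leave precisely $[Ad_k(a)(x),\alpha^{k+1}(y)]+(-1)^{|a||x|}[\alpha^{k+1}(x),Ad_k(a)(y)]$. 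I expect this sign cancellation to be the only delicate step; once it is confirmed the identity follows at once.
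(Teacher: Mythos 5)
Your proof is correct. The paper states this proposition without any proof at all, and your argument --- first disposing of homogeneity (parity $|a|$) and membership in $\Omega$, then a single application of the left Leibniz identity at $(a,\alpha^k(x),\alpha^k(y))$ using $\alpha(a)=a$, respectively of the rearranged right identity $[[X,Y],\alpha(Z)]=[\alpha(X),[Y,Z]]+(-1)^{|Y||Z|}[[X,Z],\alpha(Y)]$ at $(\alpha^k(x),\alpha^k(y),a)$ --- is the standard verification the author evidently intended; I checked the right-case sign bookkeeping and the outer prefactor $(-1)^{|a|(|x|+|y|)}$ does cancel exactly as you predict, since the coefficient of $[\alpha^{k+1}(x),Ad_k(a)(y)]$ collects to $(-1)^{|a||x|+2|a||y|}=(-1)^{|a||x|}$ and that of $[Ad_k(a)(x),\alpha^{k+1}(y)]$ to $(-1)^{2|a||x|+2|a||y|}=1$. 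One point you were right to flag explicitly: multiplicativity of $\alpha$, needed both for $\alpha^k([x,y])=[\alpha^k(x),\alpha^k(y)]$ and for $\alpha\circ ad_k(a)=ad_k(a)\circ\alpha$, is not part of the paper's definition of a Hom-Leibniz superalgebra, so it is a genuine added hypothesis --- an omission in the paper's statement rather than a gap in your argument.
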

\begin{defn}
	Let  $(L,[\cdot,\cdot ],\alpha)$ be a Hom-Leibniz   superalgebra. Then the 	$\alpha^k$-centroid of $L$ denoted as $C_{\alpha^{k}}(L)$ is defined by \[C_{\alpha^{k}}(L)=\left\lbrace d\in  \Omega\mid  d([x,y])=[d(x),\alpha^{k}(y)] = (-1)^{|d||x|}[\alpha^{k}(x),d(y)],\,\forall  \ x,\ y\in L_{0}\cup L_1                  \right\rbrace . \]
Denote by $\displaystyle C(L)=\bigoplus_{k\geq0}C_{\alpha^{k}}(L) $ the centroid of $\mathcal{L}$.
\end{defn}

\begin{defn}
	Let  $(L,[\cdot,\cdot ],\alpha)$ be a Hom-Leibniz   superalgebra and $d\in End(\mathcal{L})$.
	Then $d$ is called a $\alpha^k$-central derivation, if $d\in \Omega$ and \[ d([x,y])=[d(x),\alpha^{k}(y)]=(-1)^{|d||x|}[\alpha^{k}(x),d(y)]=0.\] 
The set of all central derivation of $\mathcal{L}$ is denoted by $ \displaystyle ZDer(L)=\bigoplus_{k\geq0}ZDer_{\alpha^{k}}(L) $.	
\end{defn} 
In the following of this section we study the structure of the centroids and derivations of  Hom-Leibniz superalgebras.
\begin{lem}
	Let $L$ be a  Hom-Leibniz superalgebra. Let $d\in Der_{\alpha^{k}}(L)  $ and 
	$\Phi\in C_{\alpha^{l}}(L)  $ then 
	\begin{enumerate}[(i)]
		\item $\Phi\circ d$ is an $\alpha^{k+l}$-derivation of $L$.
		\item $[\Phi,d]$ is $\alpha^{k+l}$-centroid of $L$.
		\item $ d\circ \Phi$ is an $\alpha^{k+l}$-centroid if and only if $\Phi\circ d$ is a $\alpha^{k+l}$-central derivation.
		\item $ d\circ \Phi$ is an $\alpha^{k+l}$-derivation only if only $[d,\Phi ]$ is a $\alpha^{k+l}$-central derivation.
	\end{enumerate}
\end{lem}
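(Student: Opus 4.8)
The plan is to verify each claim by direct computation from the defining identities, the whole argument resting on one structural observation about centroid elements. Since $d$ and $\Phi$ both lie in $\Omega$ (hence commute with $\alpha$), so do $\Phi\circ d$, $d\circ\Phi$ and therefore $[\Phi,d]$; thus every map under consideration already belongs to $\Omega$, and only the bracket compatibility need be checked. Throughout I write $|d|$ and $|\Phi|$ for the parities, and I record the two-sided form of the centroid relation: for $\Phi\in C_{\alpha^{l}}(L)$ the definition gives $[\Phi(u),\alpha^{l}(v)]=(-1)^{|\Phi||u|}[\alpha^{l}(u),\Phi(v)]$, whence, after replacing $u$ by $\alpha^{k}(u)$ and using $\Phi\alpha=\alpha\Phi$, one obtains the key identity $[\alpha^{k}\Phi(x),\alpha^{l}d(y)]=(-1)^{|\Phi||x|}[\alpha^{k+l}(x),\Phi d(y)]$. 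This identity, which converts a ``mixed'' bracket into one of the shape appearing in the derivation and centroid axioms, is the engine of the whole proof.

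For (i) I would apply $\Phi$ to the $\alpha^{k}$-derivation identity for $d([x,y])$, using the first equality $\Phi[u,v]=[\Phi(u),\alpha^{l}(v)]$ on the resulting term $\Phi[d(x),\alpha^{k}(y)]$ and the second equality $\Phi[u,v]=(-1)^{|\Phi||u|}[\alpha^{l}(u),\Phi(v)]$ on $\Phi[\alpha^{k}(x),d(y)]$. This produces exactly $(\Phi d)([x,y])=[(\Phi d)(x),\alpha^{k+l}(y)]+(-1)^{(|\Phi|+|d|)|x|}[\alpha^{k+l}(x),(\Phi d)(y)]$, which is the $\alpha^{k+l}$-derivation axiom for the parity-$(|\Phi|+|d|)$ map $\Phi\circ d$.

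For (ii) I would expand $\Phi d([x,y])$ as in (i) and $d\Phi([x,y])$ by inserting $\Phi[x,y]=[\Phi(x),\alpha^{l}(y)]$ into the derivation property of $d$. The terms $[(\Phi d)(x),\alpha^{k+l}(y)]$ and $[(d\Phi)(x),\alpha^{k+l}(y)]$ combine to $\bigl[[\Phi,d](x),\alpha^{k+l}(y)\bigr]$, and the two surviving mixed brackets cancel precisely by the key identity; this is the first centroid equality for $[\Phi,d]$, and the second follows from the mirror computation starting from the alternative form of each relation. This cancellation of mixed terms is the main obstacle, being the only place where the signs and the centroid symmetry must conspire, and it is exactly where the two-sided centroid relation is indispensable.

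For (iii) and (iv) I would isolate, in the expansion of $d\Phi([x,y])$, the single ``defect'' term measuring the failure of $d\Phi$ to be a centroid, respectively a derivation, and rewrite it through the key identity. One finds that $d\Phi([x,y])=[(d\Phi)(x),\alpha^{k+l}(y)]$ holds for all $x,y$ iff $[\alpha^{k+l}(x),(\Phi d)(y)]=0$, while $d\Phi([x,y])=(-1)^{(|\Phi|+|d|)|x|}[\alpha^{k+l}(x),(d\Phi)(y)]$ holds iff $[(\Phi d)(x),\alpha^{k+l}(y)]=0$; since $\Phi d$ is already a derivation by (i), the simultaneous vanishing of these two brackets is precisely the assertion that $\Phi d$ is an $\alpha^{k+l}$-central derivation, giving (iii). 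For (iv) I would use that $[d,\Phi]=-(-1)^{|d||\Phi|}[\Phi,d]$ is a centroid by (ii), so that $[d,\Phi]$ is a central derivation iff its common value vanishes, i.e. iff $[\alpha^{k+l}(x),[d,\Phi](y)]=0$ for all $x,y$; the same defect computation shows $d\Phi$ is an $\alpha^{k+l}$-derivation iff this very condition holds, which establishes the equivalence.
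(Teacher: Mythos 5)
Your proof is correct in all four parts, and the sign bookkeeping checks out: the two-sided relation $[\Phi(u),\alpha^{l}(v)]=(-1)^{|\Phi||u|}[\alpha^{l}(u),\Phi(v)]$ is indeed immediate from the definition of $C_{\alpha^{l}}(L)$ (both sides equal $\Phi([u,v])$), and your key identity then yields the two expansions $d\Phi([x,y])=[d\Phi(x),\alpha^{k+l}(y)]+(-1)^{|d||\Phi|+(|d|+|\Phi|)|x|}[\alpha^{k+l}(x),\Phi d(y)]$ and, by the mirror computation, $d\Phi([x,y])=(-1)^{|d||\Phi|}[\Phi d(x),\alpha^{k+l}(y)]+(-1)^{(|d|+|\Phi|)|x|}[\alpha^{k+l}(x),d\Phi(y)]$, from which the cancellations in (ii) and the defect characterizations in (iii)--(iv) follow exactly as you claim; in (iv) the observation $[d,\Phi]=-(-1)^{|d||\Phi|}[\Phi,d]$ correctly makes the two vanishing conditions interchangeable, and combined with (ii) the single condition $[\alpha^{k+l}(x),[d,\Phi](y)]=0$ for all $x,y$ does force all three expressions in the central-derivation definition to vanish. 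For comparison: the paper contains no proof of this lemma at all --- it is stated bare, and only the theorem immediately following it ($ZDer_{\alpha^{k}}(L)=Der_{\alpha^{k}}(L)\cap C_{\alpha^{k}}(L)$) carries a one-line proof deferring to the ungraded Leibniz-algebra case of \cite{centroiide}. Your direct verification is precisely what that citation leaves implicit, extended to the Hom-super setting; what it buys is an explicit check that the $\alpha^{k}$, $\alpha^{l}$ twists (harmless because all maps lie in $\Omega$ and so commute with $\alpha$) and the Koszul signs are compatible, which is the only genuinely new content in this generalization and exactly where an unverified appeal to the classical case could hide an error. One cosmetic point: the garbled ``only if only'' in (iv) must be read as ``if and only if,'' and your argument correctly establishes the equivalence in both directions.
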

\begin{thm}
	Let $L$ be a  Hom-Leibniz superalgebra. Then \[ZDer_{\alpha^{k}}(L) =Der_{\alpha^{k}}(L)\cap C_{\alpha^{k}}(L).\]
\end{thm}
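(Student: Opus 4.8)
The plan is to prove the set equality by establishing the two inclusions separately, each of which is a direct verification matching the three defining identities in play (the derivation identity, the centroid chain of equalities, and the central-derivation condition).

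First I would show $ZDer_{\alpha^{k}}(L) \subseteq Der_{\alpha^{k}}(L)\cap C_{\alpha^{k}}(L)$. Take $d\in ZDer_{\alpha^{k}}(L)$; by definition $d\in\Omega$ and
\[ d([x,y]) = [d(x),\alpha^{k}(y)] = (-1)^{|d||x|}[\alpha^{k}(x),d(y)] = 0 \]
for all homogeneous $x,y$. The centroid condition asks precisely for the chain $d([x,y]) = [d(x),\alpha^{k}(y)] = (-1)^{|d||x|}[\alpha^{k}(x),d(y)]$, which holds here since all three terms vanish; hence $d\in C_{\alpha^{k}}(L)$. For the derivation condition, the right-hand side $[d(x),\alpha^{k}(y)] + (-1)^{|d||x|}[\alpha^{k}(x),d(y)]$ is a sum of two vanishing terms, so it equals $d([x,y])=0$; hence $d\in Der_{\alpha^{k}}(L)$. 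This gives the first inclusion with essentially no computation.

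The substance lies in the reverse inclusion $Der_{\alpha^{k}}(L)\cap C_{\alpha^{k}}(L)\subseteq ZDer_{\alpha^{k}}(L)$. Let $d$ lie in both sets. Centroid membership supplies $[d(x),\alpha^{k}(y)] = d([x,y])$ and $(-1)^{|d||x|}[\alpha^{k}(x),d(y)] = d([x,y])$, so both summands on the right of the derivation identity equal $d([x,y])$. Feeding this into the derivation identity yields
\[ d([x,y]) = [d(x),\alpha^{k}(y)] + (-1)^{|d||x|}[\alpha^{k}(x),d(y)] = d([x,y]) + d([x,y]) = 2\,d([x,y]), \]
whence $d([x,y]) = 0$ for all homogeneous $x,y$. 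Once $d([x,y])=0$, the two centroid equalities force $[d(x),\alpha^{k}(y)] = 0$ and $(-1)^{|d||x|}[\alpha^{k}(x),d(y)] = 0$ as well, which is exactly the central-derivation condition; thus $d\in ZDer_{\alpha^{k}}(L)$.

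I do not anticipate a genuine obstacle. The only point needing slight care is the sign bookkeeping in the super setting, but the factor $(-1)^{|d||x|}$ occurs identically in the derivation and centroid identities, so it plays no role in the cancellation and the argument is insensitive to the grading. The cancellation step $d([x,y]) = 2\,d([x,y])$ is a mere rearrangement valid over any field (in characteristic $2$ it reads $d([x,y])=0$ directly), so no characteristic hypothesis is required. The entire proof reduces to comparing the defining identities termwise.
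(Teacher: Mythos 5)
Your proof is correct, and it is exactly the standard termwise comparison that the paper itself does not spell out: the paper's ``proof'' consists of a single sentence deferring to the analogous Leibniz-algebra argument in \cite{centroiide}, so your write-up supplies precisely the omitted details (trivial inclusion of $ZDer_{\alpha^{k}}(L)$, then the cancellation $d([x,y])=2\,d([x,y])$ forcing $d([x,y])=0$ for the reverse inclusion). Your side remarks are also sound: the sign $(-1)^{|d||x|}$ enters both defining identities identically and so is inert, and the step $d([x,y])=2\,d([x,y])\Rightarrow d([x,y])=0$ holds by subtraction over any field, so no characteristic assumption is needed.
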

\begin{proof}
	The proof of previous theorem is similar to the case Leibniz algebra given in \cite{centroiide}.                
\end{proof}

\begin{prop}
	Let  $(L,[\cdot,\cdot ],\alpha)$ be a   Hom-Leibniz superalgebra. Then the following
	statements hold:
	\begin{enumerate}[(i)]
		\item $ ad(L)\subseteq  Der(L) \subseteq \Omega$, where $ad(L)  $ denotes the superalgebra of inner derivations of $ L $. 
		\item  $ ad(L) $, 	$ Der(L) $  and $ C(L) $ are Lie ( Resp. Hom-Lie ) subsuperalgebras of $ (\Omega,[\cdot,\cdot ]' ) $ (resp. $(\Omega,[\cdot,\cdot ]', \tilde{\alpha})  $ ))
	\end{enumerate}
	
\end{prop}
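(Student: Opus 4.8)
The plan is to reduce both items to the defining identities of $\Omega$, $Der(L)$, $C(L)$ and $ad(L)$ together with the Hom-Leibniz identity; I carry out the left case, the right case being symmetric with $Ad_k$ in place of $ad_k$. For (i), the inclusion $Der(L)\subseteq\Omega$ is immediate, since an $\alpha^k$-derivation is by definition an element of $\Omega$. The inclusion $ad(L)\subseteq Der(L)$ is the content of the preceding proposition: for $a$ with $\alpha(a)=a$, the map $ad_k(a)(x)=[a,\alpha^k(x)]$ is an $\alpha^{k+1}$-derivation, and it lies in $\Omega$ because $\alpha(ad_k(a)(x))=[\alpha(a),\alpha^{k+1}(x)]=[a,\alpha^{k+1}(x)]=ad_k(a)(\alpha(x))$ by multiplicativity and $\alpha(a)=a$. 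This yields $ad(L)\subseteq Der(L)\subseteq\Omega$.

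For (ii), each of the three sets is a $\Z_2$-graded subspace of $\Omega$, so it remains to check stability under the supercommutator $[u,v]'=uv-(-1)^{|u||v|}vu$ and, for the Hom-Lie statement, under $\tilde\alpha$. The centroid is the easiest: using $d\alpha=\alpha d$, a direct computation shows that for $\Phi_1\in C_{\alpha^k}(L)$ and $\Phi_2\in C_{\alpha^l}(L)$ the composite $\Phi_1\Phi_2$ satisfies both centroid equations with twist $\alpha^{k+l}$ and parity $|\Phi_1|+|\Phi_2|$; hence $C(L)$ is closed under composition, and a fortiori under $[\cdot,\cdot]'$. For $Der(L)$ I would expand $(d_1d_2)([x,y])$ and $(d_2d_1)([x,y])$ for $d_1\in Der_{\alpha^k}(L)$, $d_2\in Der_{\alpha^l}(L)$; applying the derivation rule twice produces two diagonal terms and two cross terms, and the cross terms cancel in the supercommutator, leaving $[d_1,d_2]'\in Der_{\alpha^{k+l}}(L)$. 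The only delicate point here is the Koszul sign bookkeeping, which is forced once the supercommutator convention is fixed.

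The heart of the argument is the closure of $ad(L)$, and this is where the Hom-Leibniz identity genuinely enters. Using $\alpha^k(b)=b$ and $\alpha^l(a)=a$ and writing $w=\alpha^{k+l}(x)$, one first obtains
\[
[ad_k(a),ad_l(b)]'(x)=[a,[b,w]]-(-1)^{|a||b|}[b,[a,w]].
\]
Since $\alpha(a)=a$, the left Hom-Leibniz identity applied to $[a,[b,w]]=[\alpha(a),[b,w]]$ gives $[a,[b,w]]=[[a,b],\alpha(w)]+(-1)^{|a||b|}[b,[a,w]]$; substituting this cancels the second summand above and leaves $[[a,b],\alpha^{k+l+1}(x)]=ad_{k+l+1}([a,b])(x)$. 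By multiplicativity $\alpha([a,b])=[\alpha(a),\alpha(b)]=[a,b]$, so $ad_{k+l+1}([a,b])$ is a genuine inner derivation and $[ad_k(a),ad_l(b)]'=ad_{k+l+1}([a,b])\in ad(L)$. I expect this relation to be the main obstacle, since it is the one step using the Hom-Leibniz axiom rather than formal properties of $\Omega$, and it relies essentially on the normalizations $\alpha(a)=a$, $\alpha(b)=b$ together with multiplicativity.

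Finally, for the Hom-Lie refinement I would verify $\tilde\alpha$-invariance of each piece by one-line checks: $\tilde\alpha(ad_k(a))=\alpha\circ ad_k(a)=ad_{k+1}(a)$, while $\alpha\circ D\in Der_{\alpha^{k+1}}(L)$ for $D\in Der_{\alpha^k}(L)$ and $\alpha\circ\Phi\in C_{\alpha^{k+1}}(L)$ for $\Phi\in C_{\alpha^l}(L)$, all following by pushing $\alpha$ through the respective defining identities. Hence all three subspaces are stable under $\tilde\alpha$ and inherit the Hom-Lie structure of $(\Omega,[\cdot,\cdot]',\tilde\alpha)$.
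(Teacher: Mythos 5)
Your proof is correct, but note that the paper states this proposition without any proof at all (the neighbouring theorem on $ZDer_{\alpha^k}(L)$ is likewise dispatched by reference to the Leibniz-algebra case of \cite{centroiide}), so your argument supplies a verification the text omits rather than paralleling one. All the key steps check out: $Der(L)\subseteq\Omega$ is definitional; $ad(L)\subseteq Der(L)$ is the preceding proposition; the cancellation of the two cross terms in the supercommutator gives $[Der_{\alpha^k}(L),Der_{\alpha^l}(L)]'\subseteq Der_{\alpha^{k+l}}(L)$; closure of the centroid under composition (with the correct Koszul sign $(-1)^{(|\Phi_1|+|\Phi_2|)|x|}$, using that $\alpha$ is even) gives closure under $[\cdot,\cdot]'$; and the identity $[ad_k(a),ad_l(b)]'=ad_{k+l+1}([a,b])$ is exactly the step where the left Hom-Leibniz identity enters, with the normalizations $\alpha(a)=a$, $\alpha(b)=b$ used correctly and the right case handled symmetrically via $Ad_k$. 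The one caveat worth flagging: you repeatedly invoke multiplicativity of $\alpha$, i.e. $\alpha([x,y])=[\alpha(x),\alpha(y)]$ --- to place $ad_k(a)$ in $\Omega$, to rewrite $\alpha^k([b,\alpha^l(x)])$ as $[b,\alpha^{k+l}(x)]$, to see that $[a,b]$ is again $\alpha$-fixed so that $ad_{k+l+1}([a,b])$ is genuinely inner, and for the $\tilde\alpha$-stability of $Der(L)$ and $C(L)$ --- whereas the paper's definition of a Hom-Leibniz superalgebra only asks that $\alpha$ be an even linear map, and its first example explicitly treats multiplicativity as an extra condition. Since the paper's own preceding proposition on $ad_k(a)$ tacitly needs the same hypothesis (without it $ad_k(a)$ need not commute with $\alpha$, hence need not lie in $\Omega$), your proof is faithful to the intended setting, but you should state the multiplicativity assumption explicitly at the outset.
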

%



\section{ BiHom-Leibniz algebras}	
In this section, we recall the notion of BiHom-Lie algebras and then give some relations between BiHom-Lie algebras and  Hom-Lie algebras.
Moreover, we introduce the notion of symmetric BiHom-Leibniz algebras.  We introduce the  definitions and give some properties related to ideals of BiHom–Leibniz algebras and we extend       the concept of $(\alpha,\beta,\gamma)$-derivations
 of Lie algebras  introduced in \cite{Petr}  to  BiHom–Leibniz case.
\subsection{ BiHom-Lie algebras      }
We  first recall the definition of Hom-Lie algebra of type $I_3$ and then we give some connections between them and BiHom-Lie algebras. .
\begin{defn}\cite{homtype}
	A Hom-Lie algebra of type $I_3$ is defined by replacing, in Definition\ref{HOM}, equation\eqref{jacobie} by 
	\begin{equation}
	\circlearrowleft_{x,y,z}\big[x,[y, \alpha(z)]\big]= 0.        
	\end{equation}         
\end{defn}
\begin{defn}\label{bihom}\cite{bihom,ShengBi}
	A BiHom-Lie algebra over $ \K $ is a $4$-tuple    $ (L,[\cdot,\cdot],\alpha,\beta) $ where $ L $ is a $ \K $-vector
	space, $ \alpha,  $  $ \beta\, \colon L\to  L $    are linear maps and   $[\cdot,\cdot] \colon L\times L\to  L  $
	is a bilinear map, satisfying the
	following conditions, for all $ x,\, y,\, z\in L $ :
	\begin{gather*}
	\alpha\circ\beta=\beta\circ\alpha;\\
	\left[ \beta(x),\alpha(y)\right] =-\left[ \beta(y),\alpha(x) \right] \text{ \quad  (BiHom-skew-symmetry)};\\
	\big[ \beta^2(x) ,\left[ \beta(y),\alpha(z) \right]  \big]
	+ \big[ \beta^2(y) ,\left[ \beta(z),\alpha(x) \right]  \big]	 
	+ \big[ \beta^2(z) ,\left[ \beta(x),\alpha(y) \right]  \big]	 
	=0\\ \text{(BiHom-Jacobi condition)}. 
	\end{gather*}
\end{defn}
	In particular, if  $\alpha $ and $\beta $ preserves the bracket, then we call $ (L,[\cdot,\cdot],\alpha,\beta) $ a multiplicative BiHom-Lie algebra.\\
	 We recover  Hom-Lie algebra when we have $\beta=id_L $ and $\beta$ is a bijection.\\
	 If $ (L,[\cdot,\cdot],\alpha) $ is a  Hom-Lie algebra of type $ I_3 $ and $\alpha$ is in the centroid of $L$ (i.e $\alpha\left(  [x,y] \right) =\left[ \alpha(x),y \right]$), then   $ (L,[\cdot,\cdot],\alpha,id_L) $ is a BiHom-Lie algebra.	

In the following of this subsection, we establish a connection between BiHom-Lie algebra and  (original) Hom-Lie algebra.
\begin{prop}
	The $4$-tuple    $ (L,[\cdot,\cdot],\alpha,\alpha) $
	is a BiHom-Lie algebra if and only if   the triple $(\alpha(L),[\cdot,\cdot],\alpha)  $  is  a  Hom-Lie algebra.
\end{prop}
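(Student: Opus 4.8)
The plan is to prove both implications by the same simple observation: once we set $\beta=\alpha$, every term occurring in the BiHom-skew-symmetry and BiHom-Jacobi conditions is built only from elements of the image $\alpha(L)$, and a generic pair (resp. triple) of elements of $\alpha(L)$ is exactly $(\alpha(x),\alpha(y))$ (resp. $(\alpha(x),\alpha(y),\alpha(z))$). Under this dictionary the two BiHom identities on $L$ translate termwise into the two Hom-Lie identities on $\alpha(L)$, and conversely. Before carrying this out I would first record the well-definedness of the candidate triple $(\alpha(L),[\cdot,\cdot],\alpha)$: the map $\alpha$ restricts to a linear self-map of $\alpha(L)$ since $\alpha(\alpha(L))\subseteq\alpha(L)$, and the bracket is closed on $\alpha(L)$ because $[\alpha(x),\alpha(y)]=\alpha([x,y])\in\alpha(L)$. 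This last point is where the hypothesis that $\alpha$ preserves the bracket enters, and it is what makes Definition~\ref{HOM} literally applicable to the triple $(\alpha(L),[\cdot,\cdot],\alpha)$.

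For the forward direction, assume $(L,[\cdot,\cdot],\alpha,\alpha)$ is a BiHom-Lie algebra. Skew-symmetry of the bracket on $\alpha(L)$ follows at once from BiHom-skew-symmetry $[\alpha(x),\alpha(y)]=-[\alpha(y),\alpha(x)]$, since any two elements of $\alpha(L)$ have this form. For the Hom-Jacobi identity I would take arbitrary $u=\alpha(x)$, $v=\alpha(y)$, $w=\alpha(z)\in\alpha(L)$ and use $\alpha(u)=\alpha^{2}(x)$ to rewrite $\circlearrowleft_{u,v,w}[\alpha(u),[v,w]]=\circlearrowleft_{x,y,z}[\alpha^{2}(x),[\alpha(y),\alpha(z)]]$, which vanishes by the BiHom-Jacobi condition. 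Hence $(\alpha(L),[\cdot,\cdot],\alpha)$ satisfies \eqref{jacobie} together with skew-symmetry, so it is a Hom-Lie algebra.

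For the converse, assume $(\alpha(L),[\cdot,\cdot],\alpha)$ is a Hom-Lie algebra. The commutativity $\alpha\circ\alpha=\alpha\circ\alpha$ is trivial. The BiHom-skew-symmetry for $(L,\alpha,\alpha)$, namely $[\alpha(x),\alpha(y)]=-[\alpha(y),\alpha(x)]$, is precisely the skew-symmetry of the Hom-Lie bracket evaluated at $\alpha(x),\alpha(y)\in\alpha(L)$. Finally, substituting $u=\alpha(x)$, $v=\alpha(y)$, $w=\alpha(z)$ and $\alpha(u)=\alpha^{2}(x)$ turns the Hom-Jacobi identity $\circlearrowleft_{u,v,w}[\alpha(u),[v,w]]=0$ back into $\circlearrowleft_{x,y,z}[\alpha^{2}(x),[\alpha(y),\alpha(z)]]=0$, which is the BiHom-Jacobi condition. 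This verifies all the axioms of Definition~\ref{bihom}.

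The two computations are purely formal relabelings, so I expect the only genuine obstacle to be the well-definedness point raised in the first paragraph: one must ensure that the restricted bracket actually takes values in $\alpha(L)$ (and that $\alpha$ stabilizes $\alpha(L)$) so that "$(\alpha(L),[\cdot,\cdot],\alpha)$ is a Hom-Lie algebra" is even meaningful. I expect this to be resolved immediately by the multiplicativity $\alpha([x,y])=[\alpha(x),\alpha(y)]$, after which both directions reduce to the substitution $\alpha(\alpha(x))=\alpha^{2}(x)$.
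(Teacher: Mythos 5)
Your proof is correct, and it is exactly the routine verification the paper leaves implicit: the proposition is stated there without proof, and the intended argument is precisely your termwise relabeling $u=\alpha(x)$, $v=\alpha(y)$, $w=\alpha(z)$, under which BiHom-skew-symmetry and the BiHom-Jacobi condition with $\beta=\alpha$ become skew-symmetry and the Hom--Jacobi identity \eqref{jacobie} for the triple $(\alpha(L),[\cdot,\cdot],\alpha)$, and conversely, the quantifications matching because every element of $\alpha(L)$ has the form $\alpha(x)$. You are also right to isolate closure of the bracket on $\alpha(L)$ as the one substantive point: it requires $\alpha([x,y])=[\alpha(x),\alpha(y)]$, which is part of the BiHom-Lie definition recalled in the introduction (following \cite{MakhloufABihom}) but is absent from Definition~\ref{bihom} as printed in Section~2, where it only singles out the multiplicative case; your reading, which treats multiplicativity as a standing hypothesis on the $4$-tuple, is the one under which the stated equivalence is actually true in both directions.
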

\begin{prop}
	If $(L,[\cdot,\cdot],\alpha,\beta)$  is a  BiHom-Lie algebra and we define the map $[\cdot,\cdot]' \colon L\times L\to  L,  $   $ [x,y]'=[\beta(x),\alpha(y)],  $ for all $ x,y\in L, $ then  $(L,[\cdot,\cdot]',\alpha\beta)$ is a Hom-Lie algebra.
\end{prop}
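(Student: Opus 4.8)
The plan is to verify the two defining axioms of a Hom-Lie algebra (Definition \ref{HOM}) for the triple $(L,[\cdot,\cdot]',\alpha\beta)$, namely ordinary skew-symmetry of $[\cdot,\cdot]'$ together with the Hom-Jacobi identity taken with respect to the twisting map $\gamma := \alpha\beta$. Observe first that $\gamma = \alpha\beta = \beta\alpha$ is a well-defined linear self-map of $L$, so it is a legitimate candidate twist; since a Hom-Lie algebra carries only a single structure map, no extra commutation hypothesis on $\gamma$ is required.

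Skew-symmetry is immediate: for all $x,y\in L$ one has $[x,y]' = [\beta(x),\alpha(y)] = -[\beta(y),\alpha(x)] = -[y,x]'$, where the middle equality is exactly the BiHom-skew-symmetry axiom. This step needs no further computation and uses neither multiplicativity nor the commutation of $\alpha$ and $\beta$.

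For the Hom-Jacobi identity I would expand a single cyclic term while tracking the twisting maps carefully. Writing $[u,v]' = [\beta(u),\alpha(v)]$, one computes
\[ [\gamma(x),[y,z]']' = [\beta\gamma(x),\, \alpha([\beta(y),\alpha(z)])]. \]
Using $\alpha\beta = \beta\alpha$ gives $\beta\gamma = \beta\alpha\beta = \alpha\beta^2$, so that $\beta\gamma(x) = \alpha\beta^2(x)$. The crucial observation, and the one place where the compatibility of the structure maps with the bracket enters, is that since $\alpha$ preserves the bracket the right-hand side equals $\alpha\big([\beta^2(x),[\beta(y),\alpha(z)]]\big)$. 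Summing cyclically over $x,y,z$ and pulling $\alpha$ out by linearity, the cyclic sum $\circlearrowleft_{x,y,z}[\gamma(x),[y,z]']'$ becomes $\alpha$ applied to the BiHom-Jacobi expression
\[ [\beta^2(x),[\beta(y),\alpha(z)]] + [\beta^2(y),[\beta(z),\alpha(x)]] + [\beta^2(z),[\beta(x),\alpha(y)]], \]
which vanishes by the BiHom-Jacobi condition; hence the cyclic sum equals $\alpha(0)=0$, as required.

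The main, and essentially only, obstacle is the bookkeeping of the twists: one must confirm that $\beta\gamma = \alpha\beta^2$ so that the outer slot matches $\beta^2$ after applying $\alpha$, and then use the multiplicativity of $\alpha$ to convert each transported term exactly into the image under $\alpha$ of the corresponding BiHom-Jacobi summand. Once the three cyclic terms are recognized in this way, the conclusion is immediate. I expect no genuine difficulty beyond this alignment; in particular, only the multiplicativity of $\alpha$ (not that of $\beta$) is needed for the Jacobi step, while the skew-symmetry argument uses neither.
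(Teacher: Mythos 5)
Your proof is correct, and since the paper states this proposition without giving any proof, your computation supplies exactly the verification one would expect: skew-symmetry of $[\cdot,\cdot]'$ is the BiHom-skew-symmetry axiom read off verbatim, and your identity $[\alpha\beta(x),[y,z]']'=\alpha\bigl(\big[\beta^2(x),[\beta(y),\alpha(z)]\big]\bigr)$, obtained from $\alpha\beta=\beta\alpha$ together with one application of $\alpha([u,v])=[\alpha(u),\alpha(v)]$ at the outer bracket, converts the cyclic sum into $\alpha$ applied to the BiHom-Jacobi expression, which vanishes. One caveat deserves emphasis: the Jacobi step genuinely requires $\alpha$ to preserve the bracket, yet Definition \ref{bihom} of the paper does not include multiplicativity among its axioms (it appears there only as the optional ``multiplicative'' case), whereas the definition recalled in the introduction, following \cite{MakhloufABihom}, does include it; so the proposition, and your proof, should be read in the multiplicative setting, and without $\alpha$-multiplicativity the term $\alpha\bigl([\beta(y),\alpha(z)]\bigr)$ cannot be matched to a BiHom-Jacobi summand and the argument stalls. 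Your explicit observation that only the multiplicativity of $\alpha$ (never that of $\beta$) is used is accurate and is a worthwhile sharpening of the statement.
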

\subsection{  BiHom-Leibniz algebras }
Inspired by \cite{Samiha},  the definition of  generalized derivations of Lie algebras (see \cite{George}) and the definition of twisted
derivations (see\cite{Daniel})
, we introduce the  concept of BiHom-Leibniz algebra .\\

Let    $ (L,[\cdot,\cdot],\alpha,\beta) $ be a $4$-tuple consisting of a vector space $L$, a bilinear map 
	$[\cdot,\cdot] \colon L\times L\to  L  $ 	and  two linear maps $ \alpha,  $  $ \beta\, \colon L\to  L $ such that 
$\alpha\circ\beta=\beta\circ\alpha$,	$\alpha\left(  [x,y] \right) =\left[ \alpha(x),\alpha(y) \right]$ and 
$  \beta\left(  [x,y] \right) =\left[ \beta(x),\beta(y) \right] $ . Let $\Delta_{k,l}(L)$ denote the set of triples $(f,f',f")$ with $f,\,f',\,f"\in End(L)$
such that $[f(x),\alpha^{k}\beta^{l}(y)] +[\alpha^{k}\beta^{l}(x),f'(y)]=f"([x,y] ).        $ For all $a\in L$ define the endomorphisms $L_a,$ $R_a$ of $L$ by $ L_a(x)=[a,x]$,  $R_a(x)=[x,a]$.
\begin{defn}
\begin{enumerate}[(i)]
	\item If $(L_{\beta(x)},L_{\alpha(x)},L_{\beta\alpha(x)})\in\Delta_{0,1}(L), $ $\forall x\in L$, i.e \[  \big[\alpha\beta(x) ,[ y,z ]  \big]=\big[[\beta(x) , y],\beta(z)   \big] +   
	\big[\beta(y) ,[\alpha(x),z]   \big], \forall x,y,z\in L,	\]
	then $ (L,[\cdot,\cdot],\alpha,\beta) $ is called a  left  BiHom-Leibniz algebra. 
	\item 	If $(R_{\beta(z)},R_{\alpha(z)},R_{\beta\alpha(z)})\in\Delta_{1,0}(L), $ $\forall z\in L$, i.e
	\[  \displaystyle \big[[x , y],\alpha\beta(z)   \big] =\big[[x,\beta(z)] , \alpha(y)  \big] +   
	\big[\alpha(x),[y,\alpha(z)]   \big], \forall x,\, y,\, z \in L,
	\]then $ (L,[\cdot,\cdot],\alpha,\beta) $ is called a   right BiHom-Leibniz algebra.	 
\end{enumerate}		   
\end{defn}
\begin{prop}
	Let     $ (L,[\cdot,\cdot],\alpha,\beta) $ be a  left (respectively right) BiHom-Leibniz algebra. Then  
	\[  \displaystyle  \big[ [ \beta(x),\alpha(y) ], \alpha\beta(z) \big]=-\big[[\beta(y) , \alpha(x)],\alpha\beta(z)   \big],
	\]
	respectively
	\[  \displaystyle \big[\alpha\beta(x),\left[ \beta(y) , \alpha(z)\right]   \big] =-  
	\big[\alpha\beta(x),\left[ \beta(z),\alpha(y)\right]    \big],
	\]	
	for all  $x,\, y,\, z \in L$.      
\end{prop}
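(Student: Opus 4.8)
The plan is to mirror the classical Leibniz computation, where one proves $[[a,b],c]=-[[b,a],c]$ in a left Leibniz algebra by rearranging the defining identity and reading off an antisymmetry; the only new ingredient is that the twisting maps must be inserted in exactly the right places, which is dictated by the multiplicativity hypotheses $\alpha([u,v])=[\alpha(u),\alpha(v)]$ and $\beta([u,v])=[\beta(u),\beta(v)]$ assumed in the set-up of BiHom-Leibniz algebras. Both asserted identities are of ``antisymmetry'' type, so in each case I expect to produce a formula for the relevant triple bracket that is manifestly antisymmetric under a swap of two of the entries.

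For the left case I would start from the left BiHom-Leibniz identity $[\alpha\beta(x),[y,z]]=[[\beta(x),y],\beta(z)]+[\beta(y),[\alpha(x),z]]$ and substitute $y\mapsto\alpha(y)$ and $z\mapsto\alpha(z)$. Using $\beta\circ\alpha=\alpha\circ\beta$, the first term on the right becomes precisely $[[\beta(x),\alpha(y)],\alpha\beta(z)]$, so after isolating it one obtains $[[\beta(x),\alpha(y)],\alpha\beta(z)]=[\alpha\beta(x),[\alpha(y),\alpha(z)]]-[\alpha\beta(y),[\alpha(x),\alpha(z)]]$. Folding the twists through the inner brackets by multiplicativity of $\alpha$ rewrites the right-hand side as $[\alpha\beta(x),\alpha([y,z])]-[\alpha\beta(y),\alpha([x,z])]$, which is visibly antisymmetric under the exchange $x\leftrightarrow y$; the claimed relation follows at once.

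The right case is dual. Starting from $[[x,y],\alpha\beta(z)]=[[x,\beta(z)],\alpha(y)]+[\alpha(x),[y,\alpha(z)]]$, I would substitute $x\mapsto\beta(x)$ and $y\mapsto\beta(y)$, isolate the term $[\alpha\beta(x),[\beta(y),\alpha(z)]]$, and then use multiplicativity of $\beta$ to rewrite the two remaining terms as $[\beta([x,y]),\alpha\beta(z)]$ and $[\beta([x,z]),\alpha\beta(y)]$. The resulting expression for $[\alpha\beta(x),[\beta(y),\alpha(z)]]$ is antisymmetric under $y\leftrightarrow z$, which is exactly the stated identity.

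The computations are routine once the substitutions are fixed, so the one point that requires care --- the ``main obstacle'' --- is choosing the twisting substitutions ($y\mapsto\alpha(y),\,z\mapsto\alpha(z)$ in the left case, $x\mapsto\beta(x),\,y\mapsto\beta(y)$ in the right case) so that the composites $\alpha\beta$, $\beta$ and $\alpha$ come to rest on the correct arguments, and verifying that multiplicativity of $\alpha$ (respectively $\beta$) is precisely what collapses the doubly twisted inner brackets into a single twist. Notably, neither BiHom-skew-symmetry nor any Jacobi-type condition is needed: the one-sided Leibniz identity together with multiplicativity of the twisting maps suffices.
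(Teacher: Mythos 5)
Your proof is correct, and it is the natural (and essentially only) argument here: the paper states this proposition without giving a proof, and your substitutions ($y\mapsto\alpha(y)$, $z\mapsto\alpha(z)$ in the left BiHom-Leibniz identity; $x\mapsto\beta(x)$, $y\mapsto\beta(y)$ in the right one) followed by isolating the twisted triple bracket do yield a difference that is manifestly antisymmetric in the relevant pair of variables, which gives both stated identities. One minor refinement: the folding step via multiplicativity of $\alpha$ (resp.\ $\beta$) is purely cosmetic, since the expressions $[\alpha\beta(x),[\alpha(y),\alpha(z)]]-[\alpha\beta(y),[\alpha(x),\alpha(z)]]$ and $[[\beta(x),\beta(y)],\alpha\beta(z)]-[[\beta(x),\beta(z)],\alpha\beta(y)]$ are already antisymmetric under the exchange $x\leftrightarrow y$ (resp.\ $y\leftrightarrow z$) as written; the only structural hypothesis actually consumed beyond the one-sided Leibniz identity is the commutation $\alpha\circ\beta=\beta\circ\alpha$, used to identify $\beta\alpha(z)$ with $\alpha\beta(z)$ in the left case.
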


\begin{defn}
	If $(L,[\cdot,\cdot],\alpha,\beta) $ is a left and a right 	BiHom-Leibniz algebra, then $L$ is called a symmetric
	BiHom-Leibniz algebra.	
\end{defn}
\begin{prop}\label{1thsymmetic }
	Let $(L,[\cdot,\cdot],\alpha,\beta)$  be  a left BiHom-Leibniz algebra. Then $(L,[\cdot,\cdot],\alpha,\beta)$  is a symmetric
	BiHom-Leibniz algebra if and only if
	\begin{equation}\label{thsymmetic }
	\Big[ \beta(y),\left[  \alpha(x) ,\alpha(z)   \right]      \Big] =-   \Big[\left[  \beta(x) ,\beta(z)   \right] ,\alpha (y)     \Big],  
	\end{equation}
	for all  $x,\, y,\, z \in L$. 	
\end{prop}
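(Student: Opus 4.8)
The plan is to exploit that ``symmetric'' means \emph{simultaneously} left and right BiHom-Leibniz, so that, the left identity being assumed throughout, the entire content of the statement is the equivalence (under the left identity) between the right identity and \eqref{thsymmetic}. I will use the standing hypotheses $\alpha\beta=\beta\alpha$, $\alpha([u,v])=[\alpha(u),\alpha(v)]$ and $\beta([u,v])=[\beta(u),\beta(v)]$ freely; in particular $[\alpha(x),\alpha(z)]=\alpha([x,z])$ and $[\beta(x),\beta(z)]=\beta([x,z])$, so that \eqref{thsymmetic} is nothing but the BiHom-skew-symmetry relation $[\beta(y),\alpha(w)]=-[\beta(w),\alpha(y)]$ restricted to elements $w=[x,z]\in[L,L]$.

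The two computational identities that drive the proof each come from a single substitution. Specialising the left BiHom-Leibniz identity at $(x,y,\alpha(z))$ (that is, replacing $z$ by $\alpha(z)$) and using $\beta\alpha=\alpha\beta$ gives
\begin{equation}\label{aux-left}
[\beta(y),[\alpha(x),\alpha(z)]]=[\alpha\beta(x),[y,\alpha(z)]]-[[\beta(x),y],\alpha\beta(z)],
\end{equation}
while specialising the right BiHom-Leibniz identity at $(\beta(x),y,z)$ gives
\begin{equation}\label{aux-right}
[[\beta(x),\beta(z)],\alpha(y)]=[[\beta(x),y],\alpha\beta(z)]-[\alpha\beta(x),[y,\alpha(z)]].
\end{equation}
The point to keep in mind is that \eqref{aux-left} uses only the left identity, whereas \eqref{aux-right} uses only the right one.

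For the forward implication I would assume $L$ symmetric, so both \eqref{aux-left} and \eqref{aux-right} are at my disposal. Reading the left-hand side of \eqref{thsymmetic} through \eqref{aux-left} and the right-hand side through \eqref{aux-right}, one sees that both $[\beta(y),[\alpha(x),\alpha(z)]]$ and $-[[\beta(x),\beta(z)],\alpha(y)]$ equal the single expression $[\alpha\beta(x),[y,\alpha(z)]]-[[\beta(x),y],\alpha\beta(z)]$, which is exactly \eqref{thsymmetic}. For the converse I would retain only the left identity, so that \eqref{aux-left} still holds; substituting \eqref{thsymmetic} into its left-hand side and solving for $[[\beta(x),y],\alpha\beta(z)]$ reproduces precisely the right BiHom-Leibniz identity evaluated at $(\beta(x),y,z)$.

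The one genuine subtlety, and the step I expect to be the real obstacle, is precisely this last point: the converse delivers the right identity only for triples whose first entry lies in $\beta(L)$, since $x$ enters \eqref{thsymmetic} solely through $\beta(x)$ and $\alpha(x)$. To upgrade this to the right identity for all elements of $L$ one invokes surjectivity of $\beta$ (automatic when $\beta$ is bijective, as in the regular/multiplicative setting usually assumed), after which $\beta(x)$ ranges over all of $L$. Everything else is routine bookkeeping of the twists $\alpha,\beta$ via their commutation and multiplicativity, with no idea beyond the two substitutions \eqref{aux-left}--\eqref{aux-right}.
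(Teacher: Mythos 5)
The paper states Proposition \ref{1thsymmetic } without proof, so your attempt can only be measured against the statement itself; your two-substitution scheme is the natural argument and the computations are right. The left identity at $(x,y,\alpha(z))$ gives your first auxiliary identity, the right identity at $(\beta(x),y,z)$ gives the second, and comparing them yields \eqref{thsymmetic }, so the forward direction is complete with no extra hypotheses; your remark that, by the standing multiplicativity assumptions, \eqref{thsymmetic } is exactly BiHom-skew-symmetry $[\beta(y),\alpha(w)]=-[\beta(w),\alpha(y)]$ restricted to $w\in[L,L]$ is also accurate.

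The obstacle you flag in the converse is genuine, and it is a defect of the printed statement rather than of your proof: from the left identity plus \eqref{thsymmetic } one recovers the right identity only at triples whose first entry lies in $\beta(L)$, and no cleverer argument can do better, because the proposition is false without surjectivity of $\beta$. Concretely, take any Lie algebra with $[x,[y,z]]\neq 0$ for some elements (e.g. $\mathfrak{sl}_2$), and set $\alpha=\mathrm{id}_L$, $\beta=0$. Then $\alpha\beta=\beta\alpha$, both maps are multiplicative, and every term of the left identity and of \eqref{thsymmetic } contains a factor of the form $\beta(\cdot)$ or $[\beta(\cdot),\beta(\cdot)]$, hence vanishes; so $(L,[\cdot,\cdot],\mathrm{id}_L,0)$ is a left BiHom-Leibniz algebra satisfying \eqref{thsymmetic }, while the right identity degenerates to $[\alpha(x),[y,\alpha(z)]]=[x,[y,z]]=0$ and fails. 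Thus your appeal to surjectivity of $\beta$ is not a removable convenience but a missing hypothesis in the proposition — and note that multiplicativity, which the paper does assume, does not supply it, so your parenthetical claim that surjectivity is ``automatic \dots\ in the regular/multiplicative setting'' overstates matters. With $\beta$ surjective added, your proof is complete and correct. It is consistent with this diagnosis that the paper's theorem on $IDer^{(\lambda,\mu,\gamma)}_{\alpha^k\beta^l}(L)$, the one place \eqref{thsymmetic } is quantitatively used, explicitly assumes $\alpha$ and $\beta$ surjective; by contrast, the proof of the Yau-twist proposition invokes the converse direction of Proposition \ref{1thsymmetic } without surjectivity and therefore inherits the same gap (though there one can instead verify the right identity directly from the symmetric Leibniz structure).
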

\begin{prop}
	If  $(L,[\cdot,\cdot])$  is a symmetric Leibniz algebra and 
	$\alpha,\, \beta \colon L\to  L$ are two commuting morphisms of Leibniz algebras, and we define the map  $\{\cdot,\cdot  \}  \colon L\times L\to  L $, $ \{x,y  \}=[\alpha(x),\beta (y)],  $
	for all $x,\, y\in L,$  then $(L,\{\cdot,\cdot  \},\alpha, \beta) $ is a symmetric BiHom-Leibniz algebra, called the Yau twist of $L$
	and denoted by $L_{(\alpha,\beta)} $.
\end{prop}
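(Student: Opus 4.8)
The plan is to check the four defining conditions of a symmetric BiHom-Leibniz algebra directly for the twisted data $(L,\{\cdot,\cdot\},\alpha,\beta)$, reducing each to an identity already available in $(L,[\cdot,\cdot])$. Throughout I write $\{a,b\}=[\alpha(a),\beta(b)]$ and use repeatedly that $\alpha,\beta$ commute and are algebra morphisms for $[\cdot,\cdot]$. The commutation $\alpha\circ\beta=\beta\circ\alpha$ is assumed, so it remains to verify multiplicativity and the two (left and right) BiHom-Leibniz identities. For multiplicativity, applying $\alpha$ to $\{x,y\}$ and using that $\alpha$ is a morphism gives $\alpha(\{x,y\})=[\alpha^2(x),\alpha\beta(y)]$, while $\{\alpha(x),\alpha(y)\}=[\alpha^2(x),\beta\alpha(y)]$; these agree by $\beta\alpha=\alpha\beta$. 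The computation for $\beta$ is symmetric, so $(L,\{\cdot,\cdot\},\alpha,\beta)$ is an admissible $4$-tuple.

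For the left identity I would expand each term of $\{\alpha\beta(x),\{y,z\}\}=\{\{\beta(x),y\},\beta(z)\}+\{\beta(y),\{\alpha(x),z\}\}$ by the rule $\{a,b\}=[\alpha(a),\beta(b)]$ and push $\alpha,\beta$ into the inner brackets using multiplicativity. After normalising the twists (every occurrence of $x$ accumulates $\alpha^2\beta$, every $y$ accumulates $\alpha\beta$, every $z$ accumulates $\beta^2$), the three terms become $[a,[b,c]]$, $[[a,b],c]$ and $[b,[a,c]]$ with $a=\alpha^2\beta(x)$, $b=\alpha\beta(y)$, $c=\beta^2(z)$, so the required identity is exactly the left Leibniz identity \eqref{equtionL} evaluated at $(a,b,c)$.

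The right identity is handled in the same way. Expanding $\{\{x,y\},\alpha\beta(z)\}=\{\{x,\beta(z)\},\alpha(y)\}+\{\alpha(x),\{y,\alpha(z)\}\}$ and again normalising the twists (here $x\mapsto\alpha^2(x)$, $y\mapsto\alpha\beta(y)$, $z\mapsto\alpha\beta^2(z)$) turns it into $[[p,q],r]=[[p,r],q]+[p,[q,r]]$ with $p=\alpha^2(x)$, $q=\alpha\beta(y)$, $r=\alpha\beta^2(z)$, which is precisely the right Leibniz identity \eqref{right1}. Since $(L,[\cdot,\cdot])$ is symmetric, both \eqref{equtionL} and \eqref{right1} hold, whence $(L,\{\cdot,\cdot\},\alpha,\beta)$ is simultaneously a left and a right BiHom-Leibniz algebra, i.e. symmetric.

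The one point needing care — the main obstacle, though it is bookkeeping rather than a conceptual difficulty — is tracking the powers of $\alpha$ and $\beta$ that accumulate on each variable as the maps are carried across the brackets, and confirming that in every term the same normalised triple appears so that the Leibniz identities apply verbatim. The commutativity $\alpha\beta=\beta\alpha$ is used precisely to force these exponent patterns to coincide; without it one would be left with mismatched factors such as $\alpha\beta$ versus $\beta\alpha$ and the reduction would break down.
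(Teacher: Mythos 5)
Your proof is correct; I verified the bookkeeping and both reductions come out as you claim: in the left identity all three terms collapse onto the triple $\big(\alpha^{2}\beta(x),\alpha\beta(y),\beta^{2}(z)\big)$ so that the identity becomes exactly \eqref{equtionL}, and in the right identity onto $\big(\alpha^{2}(x),\alpha\beta(y),\alpha\beta^{2}(z)\big)$ giving exactly \eqref{right1}, with $\alpha\beta=\beta\alpha$ and multiplicativity used just as you say. Your route is, however, genuinely different from the paper's. The paper does not re-derive the left BiHom-Leibniz identity at all: it cites the known fact that the Yau twist of a left Leibniz algebra is a left BiHom-Leibniz algebra, and then invokes its criterion (Proposition \ref{1thsymmetic }) that a left BiHom-Leibniz algebra is symmetric if and only if the single identity $\big[\beta(y),[\alpha(x),\alpha(z)]\big]=-\big[[\beta(x),\beta(z)],\alpha(y)\big]$ holds; this one identity is then checked for the twisted bracket using the derived property $[a,[b,c]]=-[[b,c],a]$ of symmetric Leibniz algebras. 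So the paper verifies one equation, your proposal verifies two (left and right) plus multiplicativity. What your approach buys is self-containedness: it needs neither the external citation, nor the symmetry criterion, nor the auxiliary identity $[a,[b,c]]=-[[b,c],a]$ --- only the defining identities \eqref{equtionL} and \eqref{right1}. What the paper's approach buys is brevity given its machinery, and it exhibits the criterion of Proposition \ref{1thsymmetic } in action. Both arguments are sound.
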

\begin{proof}
	Since 	$(L,[\cdot,\cdot])$  is a left Leibniz algebra, there $L_{(\alpha,\beta)} $ is a left BiHom-Leibniz algebra (see \cite{bihom}). It remains to show the equality \ref{thsymmetic } is satisfied:
	\begin{align*}
	\{\beta(x),\{\alpha(x) ,\alpha(y)  \}
	&=\{\beta(x),[\alpha^2(x) ,\alpha\beta(y)]  \}\\
	&=\left[\alpha\beta(x),[\beta\alpha^2(x) ,\alpha\beta^2(y)]\right].
	\end{align*}
	Since   $(L,[\cdot,\cdot])$  is a symmetric Leibniz algebra, we have
	\[  \left[ a,[b,c]\right]  =-\left[ [b,c],a\right]     \text{  (see \cite{Samiha})  }.     \]
	Therefore, we have 
	\begin{align*}
	\{\beta(x),\{\alpha(x) ,\alpha(y)  \}
	&=-\left[[\beta\alpha^2(x) ,\alpha\beta^2(y)],\alpha\beta(x)\right]\\
	&=-\lbrace [\beta\alpha(x) ,\beta^2(y)],\alpha(x)\rbrace \\
	&=-\lbrace \{\beta(x) ,\beta(y)\},\alpha(x)\rbrace .
	\end{align*}
	By Proposition \ref{1thsymmetic }, we deduce that $L_{(\alpha,\beta)} $ is a symmetric BiHom-Leibniz algebra.
\end{proof}
The following results gives a way to construct Hom-Leibniz algebra starting from a BiHom-Leibniz algebra.
\begin{prop}
	Let $(L,[\cdot,\cdot],\alpha,\beta)$  be  a  BiHom-Leibniz algebra. Define the bilinear map  $\{\cdot,\cdot  \}  \colon L\times L\to  L $, $ \{x,y  \}=[\beta(x), \alpha(y)],  $
	for all $x,\, y\in L.$	Then $(L,\{\cdot,\cdot  \} ,\alpha\beta)$
	is a Hom-Leibniz algebra.	
\end{prop}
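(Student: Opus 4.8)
The final statement asks me to prove that starting from a BiHom-Leibniz algebra $(L,[\cdot,\cdot],\alpha,\beta)$, the twisted bracket $\{x,y\}=[\beta(x),\alpha(y)]$ together with the single map $\alpha\beta$ defines a Hom-Leibniz algebra. I need to work out which sidedness is being claimed. Let me think about this carefully.

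The plan is to reduce both the left and the right Hom-Leibniz identities for the twisted bracket $\{\cdot,\cdot\}$ with structure map $\alpha\beta$ to the corresponding BiHom-Leibniz identities for $[\cdot,\cdot]$, by substituting suitably twisted arguments. Throughout I use only $\alpha\beta=\beta\alpha$, $\alpha([x,y])=[\alpha(x),\alpha(y)]$ and $\beta([x,y])=[\beta(x),\beta(y)]$, which allow me to commute $\alpha$ and $\beta$ past each other and past the bracket.

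The first step is to expand the three terms appearing in the Hom-Leibniz identities. Pushing $\alpha$ and $\beta$ through the nested brackets gives
\[\{\alpha\beta(x),\{y,z\}\}=\big[\alpha\beta^2(x),[\alpha\beta(y),\alpha^2(z)]\big],\]
\[\{\{x,y\},\alpha\beta(z)\}=\big[[\beta^2(x),\alpha\beta(y)],\alpha^2\beta(z)\big], \qquad \{\alpha\beta(y),\{x,z\}\}=\big[\alpha\beta^2(y),[\alpha\beta(x),\alpha^2(z)]\big].\]
I would also record that multiplicativity of $\alpha$ and $\beta$ yields $\alpha\beta(\{x,y\})=\{\alpha\beta(x),\alpha\beta(y)\}$, so $\alpha\beta$ is compatible with the new product.

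For the left case, the required identity $\{\alpha\beta(x),\{y,z\}\}=\{\{x,y\},\alpha\beta(z)\}+\{\alpha\beta(y),\{x,z\}\}$ becomes, after these expansions, precisely the left BiHom-Leibniz identity evaluated at $(\beta(x),\alpha\beta(y),\alpha^2(z))$ in place of $(x,y,z)$; hence it holds whenever $L$ is left BiHom-Leibniz. For the right case one additionally expands $\{\{x,z\},\alpha\beta(y)\}=\big[[\beta^2(x),\alpha\beta(z)],\alpha^2\beta(y)\big]$ and checks that $\{\alpha\beta(x),\{y,z\}\}=\{\{x,y\},\alpha\beta(z)\}-\{\{x,z\},\alpha\beta(y)\}$ coincides with the right BiHom-Leibniz identity evaluated at $(\beta^2(x),\alpha\beta(y),\alpha(z))$. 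Since a BiHom-Leibniz algebra is by our convention left, right, or symmetric, the matching identity (or both, in the symmetric case) transfers to $\{\cdot,\cdot\}$, giving the claim.

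The argument is essentially bookkeeping, and the only genuine obstacle is to track correctly the powers of $\alpha$ and $\beta$ generated when the twists are commuted through the two nested brackets and then to match them against the already $\alpha\beta$-twisted BiHom-Leibniz identities. The one point needing care is the choice of substitution — $(\beta(x),\alpha\beta(y),\alpha^2(z))$ on the left versus $(\beta^2(x),\alpha\beta(y),\alpha(z))$ on the right — which is forced by where the single surviving inner $\alpha$ must be placed.
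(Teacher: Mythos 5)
Your proof is correct. The paper states this proposition without any proof, so there is no argument to compare against; your direct verification is precisely the computation the author leaves implicit, and both of your substitutions check out: the left Hom-Leibniz identity for $\{\cdot,\cdot\}$ with twist $\alpha\beta$ is the left BiHom-Leibniz identity at $(\beta(x),\alpha\beta(y),\alpha^2(z))$, and the right one is the right BiHom-Leibniz identity at $(\beta^2(x),\alpha\beta(y),\alpha(z))$, using only $\alpha\beta=\beta\alpha$ and the multiplicativity of $\alpha,\beta$, which the paper assumes in the paragraph preceding the definition of BiHom-Leibniz algebras. You also correctly respected the paper's convention that ``BiHom-Leibniz'' means left, right, or symmetric, treating each sidedness (the symmetric case following from the two); no surjectivity of $\alpha$ or $\beta$ is needed since the BiHom identities are universally quantified.
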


\begin{prop}	
	Let $L$ be a vector space,  $[\cdot,\cdot] \colon L\times L\to  L  $
	a bilinear map, $\alpha,\, \beta \colon L\to  L$ two commuting linear maps such that  
	$ 	\alpha\left(  [x,y] \right) =\left[ \alpha(x),\alpha(y) \right]$  and $
	\beta\left(  [x,y] \right) =\left[ \beta(x),\beta(y) \right] $,	
	for all  $x,\, y \in L$. 
	Define the bilinear map 
	$\{\cdot,\cdot  \}  \colon L\times L\to  L $,    $ \{x,y  \}=[\beta(x),\alpha(y)],  $ for all $x,\, y\in L.$ Then:\\
	$(\alpha(L),[\cdot,\cdot],\alpha,\beta)$ is a BiHom-Lie algebra  if and only if $(L,\{\cdot,\cdot  \} ,\alpha\beta)$ is a symmetric Hom-Leibniz algebra.
\end{prop}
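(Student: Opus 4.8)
The plan is to reduce the statement to the classical fact, already recorded in Definition \ref{HOM}, that for a twisted bracket the conjunction of skew-symmetry and the Hom--Jacobi condition is equivalent to the conjunction of the left and right Hom--Leibniz identities \eqref{leftLeibniz} and \eqref{rigtLeibniz}. Concretely, I would first dispatch the common bookkeeping: since $\alpha$ and $\beta$ commute and are multiplicative, $\gamma:=\alpha\beta$ is multiplicative for $\{\cdot,\cdot\}$, and $\alpha(L)$ is stable under $\alpha$, $\beta$ and $[\cdot,\cdot]$ (because $[\alpha(a),\alpha(b)]=\alpha([a,b])$), so both sides of the asserted equivalence are well posed. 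The whole proof then rests on expanding the two Hom--Leibniz identities for $(L,\{\cdot,\cdot\},\gamma)$ and comparing them, monomial by monomial, with BiHom-skew-symmetry and the BiHom-Jacobi condition read on $\alpha(L)$.

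For the expansion I set $\{x,y\}=[\beta(x),\alpha(y)]$ and push all twists inside by multiplicativity. The left Hom--Leibniz identity $\{\gamma(x),\{y,z\}\}=\{\{x,y\},\gamma(z)\}+\{\gamma(y),\{x,z\}\}$ becomes $[\alpha\beta^{2}(x),[\alpha\beta(y),\alpha^{2}(z)]]=[[\beta^{2}(x),\alpha\beta(y)],\alpha^{2}\beta(z)]+[\alpha\beta^{2}(y),[\alpha\beta(x),\alpha^{2}(z)]]$, and the right one yields the same left-hand side with the last summand replaced by $-[[\beta^{2}(x),\alpha\beta(z)],\alpha^{2}\beta(y)]$. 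Writing $u=\alpha(x)$, $v=\alpha(y)$, $w=\alpha(z)$, the three cyclic terms $[\beta^{2}(u),[\beta(v),\alpha(w)]]$ are exactly those of the BiHom-Jacobi sum \eqref{BiHom-Jacobi} on $\alpha(L)$, while the relation $[\alpha\beta(a),\alpha^{2}(b)]=-[\alpha\beta(b),\alpha^{2}(a)]$ is precisely BiHom-skew-symmetry \eqref{BiHom-skew} on $\alpha(L)$.

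For the implication ($\Rightarrow$) I would then run the standard manipulation: starting from \eqref{BiHom-Jacobi} on $\alpha(L)$ and using \eqref{BiHom-skew} to flip the two inner brackets, the cyclic sum rearranges into the left Hom--Leibniz identity, and flipping the outer bracket instead gives the right one; this is the BiHom analogue of the passage from skew-symmetry and Hom--Jacobi to \eqref{leftLeibniz} and \eqref{rigtLeibniz}. For ($\Leftarrow$) I read the same computation backwards: the difference of the left and right Hom--Leibniz identities is exactly one instance of BiHom-skew-symmetry, and feeding this back into the left identity reconstructs the vanishing of the cyclic BiHom-Jacobi sum. One can also package the forward direction more cheaply via the preceding proposition, that $[x,y]'=[\beta(x),\alpha(y)]$ turns a BiHom-Lie algebra into a Hom-Lie algebra, together with the observation after Definition \ref{HOM} that every Hom-Lie algebra is at once a left and a right Hom--Leibniz algebra.

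I expect the delicate point, and the main obstacle, to be the interplay between the image $\alpha(L)$, on which the BiHom-Lie axioms are imposed, and the whole of $L$, on which the Hom--Leibniz identities must hold. Most monomials in the expansion are of the form $[\alpha(\cdot),\alpha(\cdot)]$ and hence, by multiplicativity, are brackets of genuine elements of $\alpha(L)$, where \eqref{BiHom-skew} and \eqref{BiHom-Jacobi} apply verbatim; the exception is the term $[[\beta^{2}(x),\alpha\beta(y)],\alpha^{2}\beta(z)]$, whose first slot carries only $\beta^{2}$. Matching this term to a BiHom-skew instance is where the argument is tightest, and it is cleanest when $\alpha$ is bijective, so that $\alpha(L)=L$ and the skew and Jacobi relations hold on all of $L$; one should verify explicitly that the monomial bookkeeping still closes in the general case, using that $\alpha(L)$ is a subalgebra so that brackets of its elements remain inside it. Once the correspondence of monomials is fixed, the remaining steps are routine.
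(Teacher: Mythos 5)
Your expansions of the left and right Hom--Leibniz identities for $\{x,y\}=[\beta(x),\alpha(y)]$ are correct, but both directions of your argument fail at exactly the seams you gesture at, and neither is repairable as written. In the direction ($\Leftarrow$), the step ``the difference of the left and right Hom--Leibniz identities is exactly one instance of BiHom-skew-symmetry'' is false: subtracting the two identities yields only
\begin{equation*}
\big\{\alpha\beta(y),\{x,z\}\big\}=-\big\{\{x,z\},\alpha\beta(y)\big\},
\end{equation*}
i.e.\ skew-symmetry of $\alpha\beta$-images against elements of the derived subalgebra $\{L,L\}$, never against arbitrary pairs in $\alpha(L)$. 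Identities that are homogeneous of degree three in the bracket cannot control the binary bracket: take $\alpha=\beta=\mathrm{id}_L$, $L=\mathrm{span}(e_1,e_2)$ with $[e_1,e_1]=e_2$ and all other products zero. Then every triple bracket vanishes, so $(L,\{\cdot,\cdot\},\alpha\beta)=(L,[\cdot,\cdot],\mathrm{id})$ is a symmetric (Hom-)Leibniz algebra, yet $[e_1,e_1]\neq-[e_1,e_1]$, so $(\alpha(L),[\cdot,\cdot],\mathrm{id},\mathrm{id})$ is not BiHom-Lie. No manipulation of the two Leibniz identities can ``reconstruct'' BiHom-skew-symmetry on arbitrary elements.

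In the direction ($\Rightarrow$), the ``delicate point'' you flag is a genuine obstruction, not routine bookkeeping. To read $\big[[\beta^{2}(x),\alpha\beta(y)],\alpha^{2}\beta(z)\big]$ as $\big[\beta(u'),\alpha(v')\big]$ you are forced to take $u'=[\beta(x),\alpha(y)]$, which does not lie in $\alpha(L)$ in general; that $\alpha(L)$ is a subalgebra is irrelevant, because the problematic slot is $\beta$ applied to elements outside $\alpha(L)$, about which axioms imposed only on $\alpha(L)$ say nothing. Concretely: take $\beta=\mathrm{id}$, $L=\mathrm{span}(e_0,e_1)$, $\alpha(e_0)=0$, $\alpha(e_1)=e_1$, $[e_0,e_1]=e_0$, all other products zero. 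Then $\alpha$ is multiplicative and commutes with $\beta$, and $\alpha(L)=\K e_1$ carries the zero bracket, hence is BiHom-Lie; but for $\{x,y\}=[x,\alpha(y)]$ the left Hom--Leibniz identity fails at $(e_0,e_1,e_1)$, since $\{\alpha(e_0),\{e_1,e_1\}\}=0$ while $\{\{e_0,e_1\},\alpha(e_1)\}+\{\alpha(e_1),\{e_0,e_1\}\}=e_0$. Your cheaper packaging via the earlier twisting proposition has the same defect, as it needs the BiHom-Lie axioms on all of $L$. In short: the forward implication is salvageable only under surjectivity of $\alpha$ (so that $\alpha(L)=L$), the converse is false outright, and since the paper states this proposition without proof, there is no argument there that fills these gaps either.
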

\subsection{      Ideals of  BiHom-Leibniz algebras }
In this 
subsection, we extend Ideals of Hom-Leibniz algebras introduced in \cite{CassasJM} to BiHom-Leibniz algebras.
 
\begin{defn}
	Let $ (L,[\cdot,\cdot],\alpha,\beta) $ be a BiHom–Leibniz algebra.
	A  	 vector subspace $H$ of $L$ is called a BiHom–Leibniz subalgebra of $ (L,[\cdot,\cdot],\alpha,\beta) $ if $\alpha(H)\subset H, $ 
	$\beta(H)\subset H $ and $[H,H]\subset H$. In particular, a BiHom–Leibniz subalgebra $H$ is said to be a two-sided ideal if $[h,l],\, [l,h]\in H$ for all $l\in L,$ $h\in H.$ If only one relation holds, then we call $H$ a right (or left) ideal.\\	
	If $(H,[\cdot,\cdot], \alpha_H,\beta_H)$ is 
	a two-sided ideal, then the quotient $L/H$ is endowed with a BiHom–Leibniz algebra structure, naturally induced from the bracket on $L$.\\
	The commutator of two-sided ideals $H$ and $K$ of a BiHom–Leibniz
	algebra $L$, denoted by $[H,K]$, is the BiHom–Leibniz subalgebra of  $L$ spanned
	by the brackets  $[h,k]$ and $[k,h]$ for all $h\in H$, $k\in K.$	             
\end{defn}
The following lemma can be readily checked.
\begin{lem}
	Let $H$ and $K$ be two-sided ideals of a BiHom–Leibniz algebra $ (L,[\cdot,\cdot],\alpha,\beta) $.
	The following statements hold:
	\begin{enumerate}[(a)]
		\item  $H\cap K$ and $H+K$ are two-sided ideals of $L$;
		\item  $[H,K]\subseteq H\cap K$;	
		\item   $[H,K]$ is a two-sided ideal of $H$ and $K$. In particular, $[L,L]$
		is a two-sided ideal of $L$ ;
		\item $\alpha(L) $ and $\beta(L)$  are  BiHom–Leibniz subalgebras of $L$;
		\item  	If 	$L$ is a left (resp. right) BiHom–Leibniz algebra and $\beta$ (resp. $\alpha$ )  is surjective, then $[H,K]$ is an ideal of $L$;
		\item If $\alpha$ and $\beta$ are surjective, then $[H,K]$  is a two-sided ideal of $L$.
	\end{enumerate}	           
\end{lem}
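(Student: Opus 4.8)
The plan is to read off every item from the closure axioms of ideals and subalgebras, using only the multiplicativity relations $\alpha([x,y])=[\alpha(x),\alpha(y)]$, $\beta([x,y])=[\beta(x),\beta(y)]$ and $\alpha\beta=\beta\alpha$; the left/right BiHom--Leibniz identity will be invoked only for (e) and (f). Items (a) and (d) are immediate. For (a): if $x\in H\cap K$ then $\alpha(x),\beta(x)$ and the brackets $[x,l],[l,x]$ all lie in both $H$ and $K$, so $H\cap K$ absorbs, and the same componentwise check on $x=h+k$ handles $H+K$. For (d): $\alpha(\alpha(L))\subseteq\alpha(L)$, $\beta(\alpha(L))=\alpha(\beta(L))\subseteq\alpha(L)$, and $[\alpha(L),\alpha(L)]=\alpha([L,L])\subseteq\alpha(L)$, with the symmetric computation for $\beta(L)$.

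For (b) I would first observe that each defining generator lies in $H\cap K$: since $H$ is an ideal and $k\in L$ we get $[h,k]\in H$, since $K$ is an ideal and $h\in L$ we get $[h,k]\in K$, and likewise $[k,h]\in H\cap K$. By (a) the space $H\cap K$ is a BiHom--Leibniz subalgebra containing all these generators, so minimality of the generated subalgebra forces $[H,K]\subseteq H\cap K$. Item (c) then rests on a membership trick rather than on the Leibniz identity: every $w\in[H,K]$ lies simultaneously in $H$ and in $K$, so for $h'\in H$ the bracket $[h',w]$ is a bracket of an element of $H$ with an element of $K$, hence a defining generator of $[H,K]$, and $[w,h']$ is a generator of the form $[k,h]$; thus $[H,K]$ absorbs brackets with $H$, and symmetrically with $K$. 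Taking $H=K=L$ gives that $[L,L]$ is a two-sided ideal of $L$.

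The substance lies in (e) and (f), where one must absorb brackets with an arbitrary $l\in L$, and where the membership trick fails because $w\in H\cap K$ is not by itself a generator when paired with a general $l$. My approach is to expand via the BiHom--Leibniz identity: in the left case,
\[
\big[\alpha\beta(x),[h,k]\big]=\big[[\beta(x),h],\beta(k)\big]+\big[\beta(h),[\alpha(x),k]\big],
\]
and both summands are again defining generators of $[H,K]$, since $[\beta(x),h]\in H$ and $\beta(k)\in K$ for the first, while $\beta(h)\in H$ and $[\alpha(x),k]\in K$ for the second. Hence left multiplication of a generator by any element of $\operatorname{im}(\alpha\beta)$ stays inside $[H,K]$, and dually, in a right BiHom--Leibniz algebra, right multiplication by $\operatorname{im}(\alpha\beta)$ does. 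When both $\alpha$ and $\beta$ are surjective one has $\operatorname{im}(\alpha\beta)=L$, so multiplication by a genuinely arbitrary $l$ is realised and, combining the left identity with its right analogue, one obtains the two-sided absorption of (f); surjectivity of the single relevant map in (e) is meant to realise the free twisted factor (the $\beta(z)$, resp.\ $\alpha\beta(x)$, slot) for the corresponding one-sided conclusion.

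The main obstacle I anticipate is precisely this bookkeeping in (e) and (f). One must check that \emph{every} term produced by the identity returns to $[H,K]$, including the ``derivation'' cross-term in which a twist lands on a factor one does not control, and one must confirm that the prescribed surjectivity compensates exactly for the double twist $\alpha\beta$ carried by the identity, so that multiplication by an arbitrary $l\in L$ is reached rather than only multiplication by elements of $\alpha(L)$ or $\beta(L)$. Verifying that the generated subalgebra $[H,K]$, and not merely the linear span of the brackets $[h,k],[k,h]$, is stable under this process — if necessary by iterating the identity or by feeding in the inner-skew relation established for left/right BiHom--Leibniz algebras — is the delicate point; the remaining items are routine closure checks.
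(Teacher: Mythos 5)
The paper offers no proof to compare against: it dismisses this lemma with ``can be readily checked,'' so your proposal must stand on its own. For items (a)--(d) it does: the closure checks are complete, and your membership trick for (c) is correct ($w\in[H,K]\subseteq H\cap K$ by (b), so $[h',w]$ is literally a generator of the form $[h,k]$ and $[w,h']$ one of the form $[k,h]$). Your closing worry about ``the generated subalgebra versus the linear span'' also dissolves: multiplicativity gives $\alpha([h,k])=[\alpha(h),\alpha(k)]$, and $\bigl[[h,k],[h',k']\bigr]$ is itself of the form $[h'',k'']$ since $[h,k]\in H$ and $[h',k']\in K$, so the span of the brackets is already $\alpha,\beta$-stable and bracket-closed; no iteration of the identity is needed.

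The genuine gap is in (e) and in the right-absorption half of (f), exactly at the point you flagged but did not resolve. In the left identity the element being left-multiplied sits in the slot $\alpha\beta(x)$, so with only $\beta$ surjective you reach $\mathrm{im}(\alpha\beta)=\alpha(\beta(L))=\alpha(L)$, \emph{not} all of $L$: your statement that surjectivity of the single map ``realises the free twisted factor'' miscounts the twists. If you instead aim at right absorption through the $\beta(z)$ slot, you must write a generator $[h,k]$ as $[\beta(x),k]$; surjectivity of $\beta$ on $L$ gives a preimage $x$ that need not lie in $H$, and the rearranged identity $\bigl[[\beta(x),k],\beta(z)\bigr]=\bigl[\alpha(h),[k,z]\bigr]-\bigl[\beta(k),[\alpha(x),z]\bigr]$ then produces the uncontrolled cross-term $\bigl[\beta(k),[\alpha(x),z]\bigr]$, which lies in $K$ but not visibly in $[H,K]$. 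What the computation honestly yields with $\beta$ surjective is absorption of \emph{twisted} generators only, e.g.\ $\bigl[l,[\alpha(h),k]\bigr]\in[H,K]$ and $\bigl[[\beta(h),k],l\bigr]\in[H,K]$ for all $l\in L$; that makes $[\alpha(H),K]+[\alpha(K),H]$ an ideal, not $[H,K]$ itself, unless one additionally has $\alpha(H)=H$ and $\alpha(K)=K$ (for instance $\alpha$ bijective preserving the ideals). Likewise, for (f) your plan to ``combine the left identity with its right analogue'' silently assumes the symmetric case, whereas the paper's standing convention allows a BiHom-Leibniz algebra to satisfy only one of the two identities; with only the left identity, surjectivity of both maps does give the left absorption $[l,w]\in[H,K]$ for every $l=\alpha\beta(x)$ (your two-generator expansion is correct there), but the opposite side runs into the same uncontrolled cross-term. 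So (e), and two-sidedness in (f) outside the symmetric case, are not established by the proposal --- and, since the paper supplies no argument either, they would need extra hypotheses or a new idea to close.
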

In the following,
  we extend some result  related in ideals of  Leibniz algebras introduced in \cite{Samiha} to  BiHom-Leibniz case:
\begin{prop}
	Let $ (L,[\cdot,\cdot],\alpha,\beta) $ be a left (resp. right) BiHom–Leibniz algebra and  $ I_L$
	the vector
	space $ I_L$ spanned by the set $\{ [x,x] \mid \, x\in L                 \}$. If $\beta$  (resp. $\alpha$ ) is a surjective homomorphism, then,  $ I_L$ is an ideal of $L$. Moreover, $[I_L,L]=0$ (resp. $[L,I_L]=0$  ).\\
	It is clear that
	$L$ is a BiHom–Lie algebra if and only if $ I_L=\{0\}$. Therefore, the quotient algebra $L/ I_L$
	is a BiHom–Lie algebra.
\end{prop}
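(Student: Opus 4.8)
The plan is to reduce all three assertions to the defining left BiHom-Leibniz identity $[\alpha\beta(x),[y,z]]=[[\beta(x),y],\beta(z)]+[\beta(y),[\alpha(x),z]]$ together with the standing hypotheses $\alpha\beta=\beta\alpha$, $\alpha([u,v])=[\alpha(u),\alpha(v)]$, $\beta([u,v])=[\beta(u),\beta(v)]$ and surjectivity of $\beta$. The invariance of $I_L$ under the structure maps is immediate: since $\alpha,\beta$ are morphisms, $\alpha([x,x])=[\alpha(x),\alpha(x)]$ and $\beta([x,x])=[\beta(x),\beta(x)]$ are again squares, so $\alpha(I_L)\subseteq I_L$ and $\beta(I_L)\subseteq I_L$.

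The heart of the proof is the vanishing $[I_L,L]=0$, and I would model it on the classical Leibniz computation, where one rewrites the identity as $[[a,b],c]=[a,[b,c]]-[b,[a,c]]$ and sets $a=b$ so the two right-hand terms cancel. In the BiHom setting the correct substitution is dictated by $\alpha\beta=\beta\alpha$: putting $y=\alpha(x)$ in the left identity gives $[\alpha\beta(x),[\alpha(x),z]]=[[\beta(x),\alpha(x)],\beta(z)]+[\beta\alpha(x),[\alpha(x),z]]$, and since $\beta\alpha(x)=\alpha\beta(x)$ the first and last brackets coincide, forcing $[[\beta(x),\alpha(x)],\beta(z)]=0$. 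Surjectivity of $\beta$ then lets $\beta(z)$ range over all of $L$, so $[[\beta(x),\alpha(x)],w]=0$ for every $x,w$, which is the BiHom analogue of ``squares are left-central.'' I expect this cancellation to be the main obstacle, because the two twisting maps obstruct the naive ``set the first two arguments equal'' trick: only the combination $[\beta(x),\alpha(x)]$, compatible with $\alpha\beta=\beta\alpha$, yields matching inner terms, and one must lean on the morphism property and surjectivity of $\beta$ to pass from these elements to the generators $[x,x]$ of $I_L$. The right case is entirely parallel: substituting $y=\beta(z)$ in the right identity and using surjectivity of $\alpha$ gives $[\alpha(x),[\beta(z),\alpha(z)]]=0$, i.e.\ $[L,I_L]=0$.

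With $[I_L,L]=0\subseteq I_L$ established, two-sidedness reduces to $[L,I_L]\subseteq I_L$, which I would obtain by expanding $[\alpha\beta(w),[x,x]]$ through the left identity (taking $y=z=x$) and recognising the resulting sum of brackets as a combination of squares, exactly as $[u,x]+[x,u]=[u+x,u+x]-[u,u]-[x,x]$ lands in the Leibniz kernel classically; here the morphism and surjectivity hypotheses are again what force the twisted terms to recombine inside $I_L$. For the characterisation, the chain $I_L=\{0\}\iff[x,x]=0\ \forall x\iff$ the bracket is alternating is immediate, and—just as for Hom-Leibniz algebras recalled in Section~1—polarisation (in characteristic $\neq 2$) shows that an alternating left BiHom-Leibniz algebra is precisely a BiHom-Lie algebra, so the left Leibniz identity collapses to the BiHom-Jacobi condition. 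Finally, since $I_L$ is a two-sided ideal, the quotient $L/I_L$ carries the induced BiHom-Leibniz structure described in the preceding definition, and $[\bar x,\bar x]=\overline{[x,x]}=\bar 0$ makes its bracket alternating; hence $L/I_L$ is a BiHom-Lie algebra.
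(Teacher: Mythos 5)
Your invariance step is fine, and your central cancellation is carried out correctly: substituting $y=\alpha(x)$ into $[\alpha\beta(x),[y,z]]=[[\beta(x),y],\beta(z)]+[\beta(y),[\alpha(x),z]]$ and using $\alpha\beta=\beta\alpha$ does force $[[\beta(x),\alpha(x)],\beta(z)]=0$, and surjectivity of $\beta$ upgrades this to $[[\beta(x),\alpha(x)],w]=0$ for all $w$ (dually, $y=\beta(z)$ in the right identity plus surjectivity of $\alpha$ gives $[w,[\beta(z),\alpha(z)]]=0$). But this annihilates the span of the \emph{twisted} squares $[\beta(x),\alpha(x)]$, not $I_L=\mathrm{span}\{[x,x]\}$, and the single sentence you devote to the transition --- ``lean on the morphism property and surjectivity of $\beta$ to pass from these elements to the generators $[x,x]$'' --- is precisely where the proof is missing. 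Surjectivity of $\beta$ produces, for a given $u$, some $x$ with $\beta(x)=u$, whence $[\beta(x),\alpha(x)]=[u,\alpha(x)]$; nothing forces $\alpha(x)=u$, so no square $[u,u]$ is ever reached.

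The gap is not cosmetic: under exactly your hypotheses the conclusion $[I_L,L]=0$ can fail, so no argument can close it. Take $L=\mathfrak{sl}_2$ with its Lie bracket (hence a symmetric Leibniz algebra), $\beta=\mathrm{id}$ (surjective), $\alpha=\mathrm{Ad}_g$ with $g$ the rotation by $\pi/2$, and the Yau twist $\{x,y\}=[\alpha(x),\beta(y)]$ from the paper's own construction; this is a left BiHom-Leibniz algebra satisfying all standing assumptions. Since $\mathrm{Ad}_g$ sends $e\mapsto -f$, $h\mapsto -h$, one gets $\{e,e\}=[-f,e]=h\in I_L$ but $\{h,e\}=[-h,e]=-2e\neq 0$, so $\{I_L,L\}\neq 0$ (and $I_L$ is not even an ideal). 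Note this twisted algebra is BiHom-skew-symmetric, $\{\beta(x),\alpha(x)\}=[\alpha(x),\alpha(x)]=0$, so your computation in fact identifies the correct BiHom object --- the span of the elements $[\beta(x),\alpha(x)]$, whose vanishing is exactly BiHom-skew-symmetry --- whereas for the literal span of $[x,x]$ the statement needs hypotheses beyond those you use (the paper states this proposition without proof, so there is no argument there to compare against either). Your remaining steps are likewise only gestured at: expanding $[\alpha\beta(w),[x,x]]$ by the left identity yields $[[\beta(w),x],\beta(x)]+[\beta(x),[\alpha(w),x]]$, which is not visibly a combination of squares, so two-sidedness does not follow as claimed; and $I_L=0$ gives full skew-symmetry $[x,y]=-[y,x]$, from which neither the BiHom-skew-symmetry $[\beta(x),\alpha(y)]=-[\beta(y),\alpha(x)]$ nor the BiHom-Jacobi condition follows by mere polarisation --- indeed the example above is a BiHom-Lie algebra with $I_L\neq\{0\}$, so the ``if and only if'' cannot be proved the way you suggest.
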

\begin{prop}
	If  $(L,[\cdot,\cdot],\alpha,\beta)$  is  a symmetric  BiHom-Leibniz algebra,   then the   two-sided ideal
	$\left( L^2=[L,L],[\cdot,\cdot]_{/L^2\times L^2},\alpha_{/L^2},\beta_{/L^2}\right) $ is  a BiHom–Lie algebra.       
\end{prop}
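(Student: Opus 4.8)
The plan is to verify that the restricted structure on $L^2=[L,L]$ satisfies every axiom of a BiHom–Lie algebra (Definition~\ref{bihom}), the only non-inherited one being BiHom–skew-symmetry. First I would record that $L^2$ is a genuine sub-object. Since $[L,L]$ is a two-sided ideal of $L$, and since $\alpha,\beta$ are morphisms, one has $\alpha([x,y])=[\alpha(x),\alpha(y)]\in L^2$ and likewise for $\beta$, so $\alpha(L^2)\subseteq L^2$, $\beta(L^2)\subseteq L^2$ and $[L^2,L^2]\subseteq L^2$. Consequently the restrictions $\alpha_{/L^2}$, $\beta_{/L^2}$ still commute and remain multiplicative, and the left and right BiHom–Leibniz identities, holding for all elements of $L$, hold a fortiori on $L^2$; thus $(L^2,[\cdot,\cdot]_{/L^2},\alpha_{/L^2},\beta_{/L^2})$ is itself a symmetric BiHom–Leibniz algebra.

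The key step is to produce BiHom–skew-symmetry on $L^2$. For this I would start from the characterization of symmetric BiHom–Leibniz algebras established above, namely $[\beta(y),[\alpha(x),\alpha(z)]]=-[[\beta(x),\beta(z)],\alpha(y)]$ for all $x,y,z\in L$. Using multiplicativity of $\alpha$ and $\beta$ to pull the twisting maps out of the inner brackets, $[\alpha(x),\alpha(z)]=\alpha([x,z])$ and $[\beta(x),\beta(z)]=\beta([x,z])$, this identity becomes
\[
[\beta(y),\alpha([x,z])]=-[\beta([x,z]),\alpha(y)].
\]
Writing $p=[x,z]$ and extending by bilinearity, I obtain $[\beta(y),\alpha(p)]=-[\beta(p),\alpha(y)]$ for every $y\in L$ and every $p\in L^2$. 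Specializing $y$ to an element $u\in L^2\subseteq L$ and $p=v\in L^2$ then gives exactly $[\beta(u),\alpha(v)]=-[\beta(v),\alpha(u)]$ for all $u,v\in L^2$, which is BiHom–skew-symmetry for the restricted bracket. In particular, taking $u=v$ (in characteristic $\neq 2$) yields $[\beta(u),\alpha(u)]=0$, the vanishing of the Leibniz defect on $L^2$.

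Finally I would conclude that a symmetric BiHom–Leibniz algebra satisfying BiHom–skew-symmetry is a BiHom–Lie algebra, so that $L^2$ is BiHom–Lie. The BiHom–Jacobi identity should be obtained as the skew-symmetrization of the inherited left BiHom–Leibniz identity: feeding the skew relation just established into the left (and right) Leibniz identities and summing cyclically is meant to collapse the three Leibniz terms into the cyclic Jacobi sum, exactly as in the Hom–Lie computation of Definition~\ref{HOM} read in reverse (there skew-symmetry together with Jacobi yields the two Leibniz identities; here the two Leibniz identities together with skew-symmetry should return Jacobi). The main obstacle, and the point deserving the most careful bookkeeping, lies precisely in this last step: the twists attached to the outer and inner slots of the stated BiHom–Jacobi condition ($\beta^2$ outside, $[\beta(\cdot),\alpha(\cdot)]$ inside) differ from those produced directly by the left BiHom–Leibniz identity ($\alpha\beta$ outside, untwisted inner bracket), so matching them requires repeatedly using multiplicativity together with the relation $[\beta(u),\alpha(v)]=-[\beta(v),\alpha(u)]$ to transport the maps across the brackets before the cyclic cancellation can be carried out.
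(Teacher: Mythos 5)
Your first two steps are correct and essentially optimal: closure of $L^2$ under $\alpha$, $\beta$ and the bracket is immediate from multiplicativity, and your derivation of BiHom-skew-symmetry on $L^2$ from the symmetry characterization $[\beta(y),[\alpha(x),\alpha(z)]]=-[[\beta(x),\beta(z)],\alpha(y)]$, rewritten via multiplicativity and bilinearity as $[\beta(y),\alpha(p)]=-[\beta(p),\alpha(y)]$ for $y\in L$ and $p\in L^2$, is exactly the right move. (For what it is worth, the paper states this proposition without any proof at all, so there is no argument of the author's to compare yours against; it has to stand on its own.)

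The genuine gap is the BiHom-Jacobi step, which you only sketch and whose difficulty you correctly sense but do not overcome. Concretely, the best the left identity yields is obtained by substituting $y\mapsto\beta(y)$, $z\mapsto\alpha(z)$ and then applying the flip above:
\begin{equation*}
\big[\alpha\beta(x),[\beta(y),\alpha(z)]\big]=\big[\beta^{2}(y),\alpha([x,z])\big]-\big[\beta^{2}(z),\alpha([x,y])\big].
\end{equation*}
Every identity at your disposal produces either an outer twist $\alpha\beta$ (the left and right Leibniz identities) or an inner slot of the form $\alpha(p)$ with $p\in L^2$ (the flip), whereas the required BiHom-Jacobi terms are $[\beta^{2}(u),[\beta(v),\alpha(w)]]$ with $u,v,w\in L^2$: here $\beta^{2}(u)$ is not of the form $\alpha\beta(x)$, and $[\beta(v),\alpha(w)]$ need not lie in $\alpha(L^2)$ or $\alpha\beta(L)$, so none of the left identity, the right identity, or the skew flip can even be applied to such a term when $\alpha$ is not surjective. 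Hence ``repeatedly using multiplicativity together with $[\beta(u),\alpha(v)]=-[\beta(v),\alpha(u)]$'' cannot transport the twists as you hope: the cyclic cancellation works in the Hom case precisely because there the outer twist of the Leibniz identity coincides with the twist appearing in the Jacobi identity, and that coincidence fails here. To close the argument you need an additional hypothesis such as surjectivity of $\alpha$ and $\beta$ --- note that the paper itself imposes exactly such surjectivity in the neighbouring statements about $I_L$, about $Z(L)$, and in the theorem on $IDer^{(\lambda,\mu,\gamma)}_{\alpha^k\beta^l}(L)$ --- under which elements and inner brackets can be pulled back through $\alpha$ and your skew-symmetrization scheme does go through, reducing to the classical computation you cite. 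Without it, your proof is incomplete at precisely the step that carries the mathematical content of the proposition.
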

\begin{defn}
	Let $ (L,[\cdot,\cdot],\alpha,\beta) $ be a  BiHom–Leibniz algebra. The subspace $Z(L)=\{x\in L\mid [x,y]=0=[y,x],\forall y\in L \}$ of $L$ is said to be the center of $L$. Note that if $\alpha$ and 
	$\beta$ are surjective, then $Z(L)$ is a two-sided
	ideal of L.
\end{defn}
\subsection{$(\lambda,\mu,\gamma) $- 
	derivations }
Let $ (L,[\cdot,\cdot],\alpha,\beta) $ be a BiHom–Leibniz algebra.
We set
\[\Omega =\{f\in End(L)\mid f\circ \alpha=\alpha\circ f,f\circ\beta=\beta\circ f\}    .\]
\begin{defn}
	For any integer $ k,\, l $, a linear map $ D\colon L\to L $
	is called an
	$ \alpha^k\beta^l $-derivation of the BiHom-Leibniz  algebra $(L,[\cdot,\cdot ],\alpha,\beta)$,
	if
	$D\in \Omega$ and
	\begin{eqnarray*}
		D([x,y])&=&[D(x),\alpha^{k}\beta^{l}(y)]+[\alpha^{k}\beta^{l}(x),D(y)],
	\end{eqnarray*}
	for all  $ x,y \in L$.  Denote by $\displaystyle Der(L)=\bigoplus_{0\leq k,l} Der_{\alpha^k\beta^l}(L)$ the set of derivations of the Bihom-Leibniz algebra $ (L,[\cdot,\cdot],\alpha,\beta) $.
\end{defn}

\begin{prop}
	Let  $(L,[\cdot,\cdot ],\alpha,\beta)$ be a left  BiHom-Leibniz   algebra and $ a\in L $. 
	Define respectively $ D$, $D'$, $ D'' $
	by 
	\begin{equation*}
	D(x)=[\alpha\beta(a),\alpha^{k}\beta^{l}(x)],\quad  D'(x)=[\beta(a),\alpha^{k}\beta^{l}(x)],\quad   D''(x)=[\alpha(a),\alpha^{k}\beta^{l}(x)].  
	\end{equation*}	
	Then 
	\begin{enumerate}[(i)]
		\item \quad  $ \alpha\circ D'=D\circ\alpha, $ $ \beta\circ D''=D\circ\beta $;
		\item \quad$ 	[D(x),\alpha^{k}\beta^{l}(y)]+[\alpha^{k}\beta^{l}(x),D'(y)]=D''([x,y]) $;
		\item \quad If $ \alpha^{2}(a) =\alpha(a)=\beta(a)$, then $ D $ is an $\alpha^{k}\beta^{l+1}  $-derivation   of the 
		left  BiHom-Leibniz algebra $ L $.
	\end{enumerate}
\end{prop}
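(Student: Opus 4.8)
The plan is to reduce all three parts to the defining left BiHom--Leibniz identity
\[
\big[\alpha\beta(u),[v,w]\big]=\big[[\beta(u),v],\beta(w)\big]+\big[\beta(v),[\alpha(u),w]\big],
\]
which I abbreviate $(\mathrm{LBHL})$, together with the multiplicativity relations $\alpha([v,w])=[\alpha(v),\alpha(w)]$, $\beta([v,w])=[\beta(v),\beta(w)]$ and the commutation $\alpha\beta=\beta\alpha$; no further structure is needed.

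Part (i) is immediate from multiplicativity. Indeed $\alpha(D'(x))=\alpha[\beta(a),\alpha^k\beta^l(x)]=[\alpha\beta(a),\alpha^{k+1}\beta^l(x)]=D(\alpha(x))$, and symmetrically $\beta(D''(x))=[\beta\alpha(a),\alpha^k\beta^{l+1}(x)]=[\alpha\beta(a),\alpha^k\beta^{l+1}(x)]=D(\beta(x))$, the middle equality using $\beta\alpha=\alpha\beta$. Hence $\alpha\circ D'=D\circ\alpha$ and $\beta\circ D''=D\circ\beta$, with no hypothesis on $a$.

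For part (ii) I would substitute $u=a$, $v=\alpha^k\beta^l(x)$, $w=\alpha^k\beta^l(y)$ into $(\mathrm{LBHL})$. Writing $\beta\alpha^k\beta^l=\alpha^k\beta^{l+1}$ and $[\alpha^k\beta^l(x),\alpha^k\beta^l(y)]=\alpha^k\beta^l([x,y])$, the outer element $\alpha\beta(a)$ and the two inner elements $\beta(a)$, $\alpha(a)$ are exactly those defining $D$, $D'$, $D''$, and the substitution produces the relation $D([x,y])=[D'(x),\alpha^k\beta^{l+1}(y)]+[\alpha^k\beta^{l+1}(x),D''(y)]$. The remaining task is to convert this into the stated form $[D(x),\alpha^k\beta^l(y)]+[\alpha^k\beta^l(x),D'(y)]=D''([x,y])$, and this conversion is the main obstacle and the step I would scrutinize most. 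The natural instance of $(\mathrm{LBHL})$ places $\alpha\beta(a)$ in the outermost slot and attaches a surplus $\beta$ to one argument, whereas the stated identity demands $\alpha(a)$ (that is, $D''$) as the output and the bare exponent $\alpha^k\beta^l$ on both arguments. Relocating $\alpha(a)$ to the outermost position and shedding the extra $\beta$ is not a formal manipulation: it appears to require $\beta$ to be surjective --- a hypothesis invoked repeatedly elsewhere in this section --- so that one may write $a=\beta(b)$, re-apply $(\mathrm{LBHL})$ with $u=b$, and then identify the surviving terms through part (i) and multiplicativity. Pinning down the precise hypotheses under which these twists cancel is, in my view, the real difficulty of (ii).

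Part (iii) is much cleaner and, notably, does not pass through (ii). Under $\alpha^2(a)=\alpha(a)=\beta(a)$ one has $\alpha\beta(a)=\alpha(\beta(a))=\alpha(\alpha(a))=\alpha^2(a)=\alpha(a)$, so the three twisted elements coincide; hence $D=D'=D''$, and this common element is fixed by both $\alpha$ and $\beta$, placing $D$ in $\Omega$. Substituting $u=a$, $v=\alpha^k\beta^l(x)$, $w=\alpha^k\beta^l(y)$ into $(\mathrm{LBHL})$ and using $\beta\alpha^k\beta^l=\alpha^k\beta^{l+1}$, the identity collapses to $D([x,y])=[D(x),\alpha^k\beta^{l+1}(y)]+[\alpha^k\beta^{l+1}(x),D(y)]$, which is precisely the assertion that $D$ is an $\alpha^k\beta^{l+1}$-derivation. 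Here the only points to verify are the collapse $\alpha\beta(a)=\beta(a)=\alpha(a)$ and the exponent arithmetic, both routine.
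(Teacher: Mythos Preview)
The paper states this proposition without proof, so there is no argument to compare against; I can only assess your proposal on its own merits.

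Your proofs of (i) and (iii) are correct and exactly what is needed. For (i) only multiplicativity and $\alpha\beta=\beta\alpha$ are used, and for (iii) the hypothesis $\alpha^2(a)=\alpha(a)=\beta(a)$ indeed forces $\alpha\beta(a)=\alpha(a)=\beta(a)$, so that $D=D'=D''$ and this common map lies in $\Omega$; the substitution $u=a$, $v=\alpha^k\beta^l(x)$, $w=\alpha^k\beta^l(y)$ into the left BiHom--Leibniz identity then yields the $\alpha^k\beta^{l+1}$-derivation law directly.

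Your scepticism about (ii) is well founded, and you have identified the real obstruction. The only instance of the left BiHom--Leibniz identity available with the element $a$ is
\[
\big[\alpha\beta(a),[v,w]\big]=\big[[\beta(a),v],\beta(w)\big]+\big[\beta(v),[\alpha(a),w]\big],
\]
and with $v=\alpha^k\beta^l(x)$, $w=\alpha^k\beta^l(y)$ this reads $D([x,y])=[D'(x),\alpha^k\beta^{l+1}(y)]+[\alpha^k\beta^{l+1}(x),D''(y)]$, i.e.\ $(D',D'',D)\in\Delta_{k,l+1}(L)$, not the asserted $(D,D',D'')\in\Delta_{k,l}(L)$. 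Matching the stated form term by term against the identity forces the incompatible requirements $\alpha\beta(u)=\alpha(a)$, $\beta(u)=\alpha\beta(a)$ and $\alpha(u)=\beta(a)$ simultaneously, together with $v=\alpha^k\beta^l(x)$ and $\beta(v)=\alpha^k\beta^l(x)$; these cannot all be met for general $a$ and $x$. So (ii) as printed does not follow from the axioms without an additional hypothesis (such as $a\in\beta(L)$, or the fixed-point condition of (iii)), and is most plausibly a misprint for the identity you actually derived. Your diagnosis---that relocating $\alpha(a)$ to the outer slot and removing the surplus $\beta$ requires surjectivity of $\beta$---is exactly right.
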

\begin{prop}
	Let  $(L,[\cdot,\cdot ],\alpha,\beta)$ be a symmetric  BiHom-Leibniz   algebra and $ a\in L $. 	For any $ a\in L $, define 
	respectively	
	$ ad_{kl}(a)$, $ Ad_{kl}(a)\in End(L) $ by 
	\[ad_{kl}(a)(x)=[a,\alpha^{k}\beta^{l}(x)],\quad  \quad Ad_{kl}(a)(x)=[\alpha^{k}\beta^{l}(x),a],\, \forall x\in L.  \] Then	
	\begin{enumerate}[(i)]
		\item
		 \begin{align*}
		   \alpha\circ ad_{kl}(a)=ad_{kl}(\alpha(a))\circ\alpha;& \quad&
		\beta\circ ad_{kl}(a)=ad_{kl}(\beta(a))\circ\beta; \\
		\alpha\circ Ad_{kl}(a)=Ad_{kl}(\alpha(a))\circ\alpha;&\quad&
		\beta\circ Ad_{kl}(a)=Ad_{kl}(\beta(a))\circ\beta;
	\end{align*}	
		\item 
		\begin{align*}
		Ad_{kl}(\alpha\beta(a))([x,y])&=-ad_{k+1,l-1}(\beta^2(a))([x,y])  ;    \\
		ad_{kl}(\alpha\beta(a))([x,y])&=-Ad_{k-1,l+1}(\alpha^2(a))([x,y]);\\
		Ad_{kl}(\alpha\beta(a))([x,y])&=[Ad_{kl}(\beta(a))(x),\alpha^{k+1}\beta^{l}(y)]-[\alpha^{k+1}\beta^{l}(x),ad_{k+1,l-1}(\beta(a))(x)].
		\end{align*}		  
		\item \quad If $ \alpha\beta(a) =\alpha(a)=\beta(a)$, $Ad_{k,l}  $ and $ad_{k,l}  $  are $\alpha^{k}\beta^{l+1}  $-derivation   of the 
		symmetric  BiHom-Leibniz algebra $ L $.
	\end{enumerate}
\end{prop}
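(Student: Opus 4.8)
The plan is to reduce all three parts to the two defining identities (the left and right BiHom-Leibniz laws), the multiplicativity relations $\alpha([x,y])=[\alpha(x),\alpha(y)]$ and $\beta([x,y])=[\beta(x),\beta(y)]$, the commutation $\alpha\beta=\beta\alpha$, and the symmetric characterization of Proposition~\ref{1thsymmetic }. Part (i) is the routine part: I would evaluate each identity on an arbitrary $x$ and push $\alpha$ (resp.\ $\beta$) through the bracket. For instance $\alpha(ad_{kl}(a)(x))=\alpha([a,\alpha^k\beta^l(x)])=[\alpha(a),\alpha^{k+1}\beta^l(x)]=ad_{kl}(\alpha(a))(\alpha(x))$, and the remaining three equalities follow the same way; no structural input beyond multiplicativity and $\alpha\beta=\beta\alpha$ is used.

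For part (ii) I would first use multiplicativity to turn $\alpha^k\beta^l([x,y])$ into $[\alpha^k\beta^l(x),\alpha^k\beta^l(y)]$, so that each side becomes a single nested bracket in twisted arguments. The first two relations then come directly from the symmetric identity $[\beta(y),[\alpha(x),\alpha(z)]]=-[[\beta(x),\beta(z)],\alpha(y)]$ of Proposition~\ref{1thsymmetic }: for the first relation I substitute $x\mapsto\alpha^k\beta^{l-1}(x)$, $z\mapsto\alpha^k\beta^{l-1}(y)$, $y\mapsto\beta(a)$, and for the second $x\mapsto\alpha^{k-1}\beta^l(x)$, $z\mapsto\alpha^{k-1}\beta^l(y)$, $y\mapsto\alpha(a)$; these substitutions are exactly what produce the index shifts $(k+1,l-1)$ and $(k-1,l+1)$ appearing in the statement. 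For the third relation I would expand $Ad_{kl}(\alpha\beta(a))([x,y])=[[\alpha^k\beta^l(x),\alpha^k\beta^l(y)],\alpha\beta(a)]$ by the right defining identity with outer variable $z=a$, obtaining $[Ad_{kl}(\beta(a))(x),\alpha^{k+1}\beta^l(y)]+[\alpha^{k+1}\beta^l(x),[\alpha^k\beta^l(y),\alpha(a)]]$, and then rewrite the second summand using the right skew-relation $[\alpha\beta(x),[\beta(y),\alpha(z)]]=-[\alpha\beta(x),[\beta(z),\alpha(y)]]$ proved earlier for right BiHom-Leibniz algebras, which turns $[\alpha^k\beta^l(y),\alpha(a)]$ (against the fixed left slot) into $-[\beta(a),\alpha^{k+1}\beta^{l-1}(y)]=-ad_{k+1,l-1}(\beta(a))(y)$. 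I note that the last $(x)$ in the displayed third relation should read $(y)$.

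For part (iii) I would first record that the hypothesis $\alpha\beta(a)=\alpha(a)=\beta(a)$ makes $a$ behave as a common fixed point of $\alpha$ and $\beta$ (in particular $a=\alpha\beta(a)$); then part (i) immediately yields $\alpha\circ ad_{kl}(a)=ad_{kl}(a)\circ\alpha$ and $\beta\circ ad_{kl}(a)=ad_{kl}(a)\circ\beta$, i.e.\ $ad_{kl}(a)\in\Omega$, and likewise $Ad_{kl}(a)\in\Omega$. The derivation rule for $ad_{kl}(a)$ then falls out of the left defining identity applied to $[a,[\alpha^k\beta^l(x),\alpha^k\beta^l(y)]]$ with $X=a=\alpha\beta(a)$, producing exactly $[ad_{kl}(a)(x),\alpha^k\beta^{l+1}(y)]+[\alpha^k\beta^{l+1}(x),ad_{kl}(a)(y)]$; symmetrically, applying the right defining identity to $[[\alpha^k\beta^l(x),\alpha^k\beta^l(y)],a]$ with $Z=a$ gives the Leibniz rule for $Ad_{kl}(a)$.

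The main obstacle is the exponent bookkeeping in (ii) and (iii): the left law shifts the twist by one unit of $\beta$ while the right law shifts it by one unit of $\alpha$, which is why the companion maps enter with indices $(k\pm1,l\mp1)$. In particular I would double-check the twist obtained for $Ad_{kl}(a)$ in (iii): expanding by the right law naturally outputs an $\alpha^{k+1}\beta^{l}$-derivation, so matching the stated $\alpha^k\beta^{l+1}$ either requires the stronger normalization $\alpha(a)=\beta(a)=a$ fed back through relation (ii), or a correction to the stated exponent for $Ad_{kl}(a)$. Apart from this indexing care, the structural inputs---multiplicativity and Proposition~\ref{1thsymmetic }---are entirely routine to apply.
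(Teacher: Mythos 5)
The paper states this proposition without any proof, so there is nothing to compare against line by line; judged on its own, your reconstruction is the natural verification and parts (i) and (ii) are fully correct. I checked your substitutions: in (ii) the choices $x\mapsto\alpha^{k}\beta^{l-1}(x)$, $z\mapsto\alpha^{k}\beta^{l-1}(y)$, $y\mapsto\beta(a)$ and $x\mapsto\alpha^{k-1}\beta^{l}(x)$, $z\mapsto\alpha^{k-1}\beta^{l}(y)$, $y\mapsto\alpha(a)$ into the symmetric identity of Proposition 2.11 do produce the two stated relations with the index shifts $(k+1,l-1)$ and $(k-1,l+1)$, and your expansion of the third relation by the right Leibniz law with $z=a$, followed by the right skew relation applied with $\alpha\beta$-preimage $\alpha^{k}\beta^{l-1}(x)$, gives exactly the displayed formula; your observation that the final argument should be $(y)$ rather than $(x)$ is a genuine typo in the paper.

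Two remarks on (iii), where you are right in substance but slightly loose in one inference. The hypothesis $\alpha\beta(a)=\alpha(a)=\beta(a)$ does \emph{not} give $a=\alpha\beta(a)$, as your parenthetical asserts; what it gives is that $b:=\alpha(a)=\beta(a)$ satisfies $\alpha(b)=\beta(b)=b$ (e.g.\ $\beta(b)=\beta\alpha(a)=\alpha\beta(a)=b$). Consequently part (i) yields $\Omega$-membership and the left-law Leibniz rule for $ad_{kl}(b)$ and $Ad_{kl}(b)$, not for $ad_{kl}(a)$ itself: the left law computes $\bigl[\alpha\beta(a),[\cdot,\cdot]\bigr]=[b,[\cdot,\cdot]]$, so the conclusion should be read for $b$, or the hypothesis strengthened to $\alpha(a)=\beta(a)=a$. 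Your closing flag about the $Ad$ exponent is correct and identifies a real error in the statement: the right law gives $Ad_{kl}(b)([x,y])=[Ad_{kl}(b)(x),\alpha^{k+1}\beta^{l}(y)]+[\alpha^{k+1}\beta^{l}(x),Ad_{kl}(b)(y)]$, i.e.\ an $\alpha^{k+1}\beta^{l}$-derivation, and no normalization of $a$ can convert this into the stated $\alpha^{k}\beta^{l+1}$, since the twist acts on $x,y$ rather than on $a$; this is also consistent with (ii), where $Ad_{kl}(b)=-ad_{k+1,l-1}(b)$ on brackets and the latter is an $\alpha^{k+1}\beta^{l}$-derivation. So the correct reading is: $ad_{kl}(b)$ is an $\alpha^{k}\beta^{l+1}$-derivation and $Ad_{kl}(b)$ an $\alpha^{k+1}\beta^{l}$-derivation.
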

\begin{defn} Let $ (L,[\cdot,\cdot],\alpha,\beta) $ be a BiHom–Leibniz algebra and $\lambda,\mu,\gamma $
	be elements of $\K$	.
	A linear map $d\in \Omega$ is a $ (\lambda,\mu,\gamma)$-$ \alpha^k\beta^l $-derivation of $L$ if for all $x,y\in L$ we have\[ \lambda\,	d([x,y])=\mu\, [d(x),\alpha^{k}\beta^{l}(y)]+ \gamma\,[\alpha^{k}\beta^{l}(x),d(y)]. \]
	
	We denote the set of all $ (\lambda,\mu,\gamma)$-derivations by $\displaystyle Der^{(\lambda,\mu,\gamma)}(L)=\bigoplus_{0\leq k,l} Der_{\alpha^k\beta^l}^{(\lambda,\mu,\gamma)}(L).$
\end{defn} 
\begin{defn}
	Let $ (L,[\cdot,\cdot],\alpha,\beta) $ be a BiHom–Leibniz algebra over a field $\K$ (char $\K\neq 2$) and $\lambda,\mu,\gamma\in \K $. 
	Then for $ Der_{\alpha^k\beta^l}^{(\lambda,\mu,\gamma)}(L)$
	we fix the followings particular cases: 
	\begin{enumerate}[(a)]
		\item $\displaystyle Der_{\alpha^k\beta^l}^{(1,1,1)}(L)=Der_{\alpha^k\beta^l } (L)    ,$ 	   
		\item $\displaystyle Der_{\alpha^k\beta^l}^{(1,1,0)}(L)=\{ d\in  \Omega\mid d([x,y]) = [d(x),\alpha^{k}\beta^{l}(y)]          \},$ 
		is called the $\alpha^k\beta^l$-left-centroid of $L$;	   
		\item $\displaystyle Der_{\alpha^k\beta^l}^{(1,0,1)}(L)=\{ d\in  \Omega \mid d([x,y]) = [\alpha^{k}\beta^{l}(x),d(y)]          \},$ 
		is called the $\alpha^k\beta^l$-right-centroid of $L$;
		\item $\displaystyle Der_{\alpha^k\beta^l}^{(1,1,0)}(L)\cap Der_{\alpha^k\beta^l}^{(1,0,1)}(L)$, 
		is called the $\alpha^k\beta^l$-centroid of $L$ and it is noted $C_{\alpha^k\beta^l}(L)$;
		\item \begin{gather*}
		 \displaystyle Der_{\alpha^k\beta^l}^{(0,1,-1)}(L)\cap Der_{\alpha^k\beta^l}^{(1,1,-1)}(L)\\ =\{ d\in   \Omega\mid d([x,y]) = 0 =         [d(x),\alpha^{k}\beta^{l}(y)]=[\alpha^{k}\beta^{l}(x),d(y)]          \}       
		\end{gather*}
		is called the $\alpha^k\beta^l$-central derivation of $L$ and it is noted $Zder_{\alpha^k\beta^l}(L)$; 
		\item $ Der_{\alpha^k\beta^l}^{(0,1,-1)}(L)=\{ d\in   \Omega\mid         [d(x),\alpha^{k}\beta^{l}(y)]=[\alpha^{k}\beta^{l}(x),d(y)]          \},$ 
		is called the $\alpha^k\beta^l$-quasi-centroid of $L$ and it is noted $QC_{\alpha^k\beta^l}(L)$. 	   	 		
	\end{enumerate}        
\end{defn}
Now, we consider the subspace  \[ IDer_{\alpha^k\beta^l}^{(\lambda,\mu,\gamma)}(L)=\{d\in  \Omega  \mid \lambda\,	d([x,u])=\mu\, [d(x),\alpha^{k}\beta^{l}(u)]+ \gamma\,[\alpha^{k}\beta^{l}(x),d(u)], \forall x\in L,u\in L^2   \}.\]
\begin{thm}
	Let $ (L,[\cdot,\cdot],\alpha,\beta) $ be a symmetric BiHom–Leibniz algebra 
	such that the maps $\alpha$ and $\beta$ are surjective. Let  $\lambda,\mu,\gamma $
	be elements of $\K$	.
	\begin{enumerate}[(a)]
\item If $\lambda\neq 0$ and $\mu^2\neq \gamma^2$. Then $\displaystyle IDer_{\alpha^k\beta^l}^{(\lambda,\mu,\gamma)}(L)=\displaystyle IDer_{\alpha^k\beta^l}^{(\frac{ \lambda}{\mu+\gamma},1,0)}(L)$.
\item If $\lambda\neq 0$, $\mu\neq 0$ and $\gamma=-\mu$. Then $\displaystyle IDer_{\alpha^k\beta^l}^{(\lambda,\mu,\gamma)}(L)=\displaystyle IDer_{\alpha^k\beta^l}^{(\frac{ 1}{2},1,0)}(L)$.
\item If $\lambda\neq 0$, $\mu=\gamma$ and $\mu\neq 0 $. Then $\displaystyle IDer_{\alpha^k\beta^l}^{(\lambda,\mu,\gamma)}(L)=\displaystyle IDer_{\alpha^k\beta^l}^{(\frac{ \lambda}{\mu},1,1)}(L)$.
\item If $\lambda\neq 0$, $\mu=\gamma=0$. Then $\displaystyle IDer_{\alpha^k\beta^l}^{(\lambda,\mu,\gamma)}(L)=\displaystyle IDer_{\alpha^k\beta^l}^{(1,0,0)}(L)$.
\item If $\lambda= 0$ and $\mu^2\neq \gamma^2$. Then $\displaystyle IDer_{\alpha^k\beta^l}^{(\lambda,\mu,\gamma)}(L)=\displaystyle IDer_{\alpha^k\beta^l}^{(0,1,0)}(L)$.
\item If $\lambda= 0$ and $\mu= \gamma$. Then $\displaystyle IDer_{\alpha^k\beta^l}^{(\lambda,\mu,\gamma)}(L)=\displaystyle IDer_{\alpha^k\beta^l}^{(0,1,1)}(L)$.
\item If $\lambda= 0$ and $\mu= -\gamma$. Then $\displaystyle IDer_{\alpha^k\beta^l}^{(\lambda,\mu,\gamma)}(L)=\displaystyle IDer_{\alpha^k\beta^l}^{(0,1,-1)}(L)$.
 	\end{enumerate}	                                     
\end{thm}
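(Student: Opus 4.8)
The plan is to fix $x,y\in L$, to abbreviate $\bar z:=\alpha^{k}\beta^{l}(z)$, and to read the condition defining $IDer_{\alpha^{k}\beta^{l}}^{(\lambda,\mu,\gamma)}(L)$ as the single linear relation $\lambda\,d([x,y])=\mu[d(x),\bar y]+\gamma[\bar x,d(y)]$. Four of the seven cases are then a mere rescaling of this identity. Indeed, in (c), (d), (f) and (g) the target triple is proportional to $(\lambda,\mu,\gamma)$, since $(\lambda,\mu,\mu)=\mu(\tfrac{\lambda}{\mu},1,1)$, $(\lambda,0,0)=\lambda(1,0,0)$, $(0,\mu,\mu)=\mu(0,1,1)$ and $(0,\mu,-\mu)=\mu(0,1,-1)$; dividing the defining relation by the relevant nonzero scalar ($\lambda$ or $\mu$, as guaranteed by the hypotheses) shows that the two conditions are the same equation, and hence define the same subspace of $\Omega$.

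For the genuinely nontrivial cases (a), (b) and (e) I would exploit that $L$ is symmetric. By Proposition \ref{1thsymmetic } the bracket is skew–symmetric on the derived ideal, and this is exactly the ingredient that allows one to pair the defining relation written for $(x,y)$ with the one written for $(y,x)$:
\[\lambda\,d([x,y])=\mu[d(x),\bar y]+\gamma[\bar x,d(y)],\qquad \lambda\,d([y,x])=\mu[d(y),\bar x]+\gamma[\bar y,d(x)].\]
Using the skew relations of Proposition \ref{1thsymmetic } together with the surjectivity of $\alpha$ and $\beta$ — to redistribute the twist $\alpha^{k}\beta^{l}$ between the two slots and to write each argument in the form $\alpha(\cdot)$ or $\beta(\cdot)$ so that the proposition applies — I would reduce the four cross–brackets to their symmetric and antisymmetric combinations and arrive at a $2\times2$ linear system whose coefficient determinant is proportional to $\mu^{2}-\gamma^{2}$.

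The three cases are then governed by this determinant. In the non–degenerate range $\mu^{2}\neq\gamma^{2}$ (cases (a) and (e)) the system is invertible, which forces the antisymmetric combination to vanish and hence identifies $[\bar x,d(y)]$ with $[d(x),\bar y]$; feeding this back collapses $\mu[d(x),\bar y]+\gamma[\bar x,d(y)]$ into $(\mu+\gamma)[d(x),\bar y]$, so the defining relation becomes $\lambda\,d([x,y])=(\mu+\gamma)[d(x),\bar y]$, i.e. the normal form $(\tfrac{\lambda}{\mu+\gamma},1,0)$ when $\lambda\neq0$ and the normal form $(0,1,0)$ when $\lambda=0$. The reverse inclusions are immediate, since each normalized identity trivially implies the original one. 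On the degenerate loci $\mu^{2}=\gamma^{2}$ the determinant vanishes and one instead keeps the surviving channel: $\mu=\gamma$ preserves the symmetric combination $[d(x),\bar y]+[\bar x,d(y)]$ (the rescaling already handled in (c), (f)), while $\mu=-\gamma$ preserves the antisymmetric combination $[d(x),\bar y]-[\bar x,d(y)]$, which is where the normalization of case (b) must be extracted.

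I expect the single point requiring real work to be the reduction to the $2\times2$ system: one must pin down the exact skew relations among the four cross–brackets coming from Proposition \ref{1thsymmetic } and the two multiplicativity identities $\alpha[x,y]=[\alpha x,\alpha y]$, $\beta[x,y]=[\beta x,\beta y]$, and verify that surjectivity of $\alpha,\beta$ genuinely transports the twist $\alpha^{k}\beta^{l}$ onto the correct arguments, so that the determinant comes out as $\mu^{2}-\gamma^{2}$ and not something spurious. The contributions landing in the central ideal spanned by the squares $[z,z]$ must be carried along throughout, since it is their vanishing under further bracketing that makes the antisymmetric part disappear in (a), (e) and the cancellations close up on the degenerate locus (b).
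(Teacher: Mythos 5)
Your proposal takes essentially the same route as the paper: the paper's proof likewise treats the proportional cases by rescaling and reduces the remaining ones to the Lie-algebra argument of Novotn\'y--Hrivn\'ak \cite{Petr}, after first using the surjectivity of $\alpha$ and $\beta$ together with \eqref{thsymmetic } to produce the twisted skew-symmetry $[\beta(a),\alpha(v)]=-[\beta(v),\alpha(a)]$ on $L\times L^2$ --- exactly the swap mechanism behind your $2\times 2$ system with determinant proportional to $\mu^2-\gamma^2$. Where the paper simply defers the case analysis to \cite{Petr}, you sketch it explicitly (including the point that the twist $\alpha^k\beta^l$ must be transported correctly), so your attempt is, if anything, slightly more detailed than the paper's own proof.
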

\begin{proof}
Let $x\in L,\, u=[y,z]\in L^2.$	Since $\alpha$ and $ \beta$ are surjectives there exists $a,\,v\in L $ so that $x=\beta(a) $, $u=\alpha(v)=[\alpha(y),\alpha(z)]$. It follows from \eqref{thsymmetic } that $[\beta(a),\alpha(v)]=- [\beta(v),\alpha(a)]   $.
The rest of proof is similar to the one of case of   Lie algebras  given in \cite{Petr}.              
\end{proof}
\begin{cor}
	Let $ (L,[\cdot,\cdot],\alpha,\beta) $ be a  BiHom–Lie algebra 
	such that the maps $\alpha$ and $\beta$ are surjective. 
	For any $\lambda,\mu,\gamma\in \K$ 
	there exists $\delta\in \K$ such that 
	the subspace $ Der_{\alpha^k\beta^l}^{(\lambda,\mu,\gamma)}(L)$
	is equal to one
	of the four following subspaces: 	
	\begin{multicols}{2} 
		\begin{enumerate}[(a)]
			\item $\displaystyle Der_{\alpha^k\beta^l}^{(\delta,0,0)}(L)$;
			\item $\displaystyle Der_{\alpha^k\beta^l}^{(\delta,1,1)}(L)$;
			\item $\displaystyle Der_{\alpha^k\beta^l}^{(\delta,1,-1)}(L)$;
			\item $\displaystyle Der_{\alpha^k\beta^l}^{(\delta,1,0)}(L)$.
		\end{enumerate}
	\end{multicols}                            
\end{cor}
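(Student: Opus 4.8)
The plan is to adapt the mechanism of the preceding Theorem, using that a BiHom–Lie algebra carries the BiHom-skew-symmetry $[\beta(x),\alpha(y)]=-[\beta(y),\alpha(x)]$ for \emph{all} $x,y\in L$ (not merely on $L^2$), so that the same collapse of derivation-classes applies to the full space $Der_{\alpha^k\beta^l}^{(\lambda,\mu,\gamma)}(L)$. Throughout I abbreviate $A:=[d(x),\alpha^{k}\beta^{l}(y)]$ and $B:=[\alpha^{k}\beta^{l}(x),d(y)]$, so the defining identity of $d\in Der_{\alpha^k\beta^l}^{(\lambda,\mu,\gamma)}(L)$ reads $\lambda\,d([x,y])=\mu A+\gamma B$. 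A first, trivial reduction is that multiplying this identity by a nonzero scalar leaves its solution set unchanged, so the subspace depends only on the projective class $(\lambda:\mu:\gamma)$; this is what allows the normalisations $\mu,\gamma\in\{0,\pm1\}$ occurring in forms (a)--(d).

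The heart of the argument is a \emph{swap identity}. Fixing $d\in Der_{\alpha^k\beta^l}^{(\lambda,\mu,\gamma)}(L)$ and using surjectivity I would write arbitrary elements as $x=\beta(p)$, $y=\alpha(q)$, apply the defining identity to the pair $(\beta(q),\alpha(p))$, invoke that $d$ commutes with $\alpha,\beta$ and that $\alpha,\beta$ are multiplicative, and then apply BiHom-skew-symmetry to both of the resulting brackets as well as to $d([\beta(p),\alpha(q)])=-d([\beta(q),\alpha(p)])$. This yields
\[
\lambda\,d([x,y])=\gamma A+\mu B\qquad\text{for all }x,y\in L,
\]
i.e. $Der_{\alpha^k\beta^l}^{(\lambda,\mu,\gamma)}(L)=Der_{\alpha^k\beta^l}^{(\lambda,\gamma,\mu)}(L)$. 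Subtracting the two identities gives $(\mu-\gamma)(A-B)=0$, and adding them gives $2\lambda\,d([x,y])=(\mu+\gamma)(A+B)$ (here $\operatorname{char}\K\neq2$ enters).

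With these relations the result follows by a short case analysis on $(\lambda:\mu:\gamma)$. If $\mu^{2}\neq\gamma^{2}$ then $\mu\neq\gamma$ forces $A=B$ (the quasi-centroid condition), whence $\lambda\,d([x,y])=(\mu+\gamma)A$ and, since $\mu+\gamma\neq0$, one gets $d\in Der_{\alpha^k\beta^l}^{(\delta,1,0)}(L)$ with $\delta=\frac{\lambda}{\mu+\gamma}$, which is form (d); for the reverse inclusion one uses the swap identity to check that any such $d$ automatically satisfies $A=B=\delta\,d([x,y])$ and hence the original $(\lambda,\mu,\gamma)$-relation. If $\mu=\gamma\neq0$ one rescales to $(\delta,1,1)$, form (b); if $\mu=-\gamma\neq0$ one rescales to $(\delta,1,-1)$, form (c); and if $\mu=\gamma=0$ the identity reduces to $\lambda\,d([x,y])=0$, giving form (a) (including $\lambda=0$, i.e. $\delta=0$, where the subspace is all of $\Omega$). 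Each of the seven situations of the preceding Theorem falls into exactly one of these four cases, with the indicated value of $\delta$.

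The hard part will be the careful bookkeeping in the swap identity: one must track how the fixed twist $\alpha^{k}\beta^{l}$, the extra $\alpha$ and $\beta$ introduced by the substitution $x=\beta(p),\,y=\alpha(q)$, and the repeated use of BiHom-skew-symmetry combine so that the transformed brackets reassemble \emph{precisely} into $\gamma A+\mu B$ — a single sign error anywhere destroys the whole reduction. Once the swap identity is correctly in place, the remaining steps are the same projective and linear manipulations used in the Lie-algebra case of \cite{Petr}, so I would only sketch the verification of both inclusions in one representative case (namely $\mu^{2}\neq\gamma^{2}$) and indicate that the others are analogous.
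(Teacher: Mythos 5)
Your proposal is correct and takes essentially the same route as the paper, which likewise uses surjectivity of $\alpha$ and $\beta$ to bring BiHom-skew-symmetry into play (writing elements as $\beta(p)$, $\alpha(q)$) and then defers to the swap-and-rescale case analysis of the Lie-algebra classification in \cite{Petr}. Your swap identity $\lambda\,d([x,y])=\gamma A+\mu B$ does check out in the BiHom setting — the twists reassemble exactly as claimed, using only $d\in\Omega$, $\alpha\circ\beta=\beta\circ\alpha$ and two applications of $\left[\beta(u),\alpha(v)\right]=-\left[\beta(v),\alpha(u)\right]$ — and the ensuing case analysis on $(\lambda:\mu:\gamma)$ is sound.
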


\section{BiHom-Leibniz colour algebras}
Let $\Gamma$ be an abelian group. A vector space $\mathcal{A}$ is said to be $\Gamma$-graded, if there is a family $(\mathcal{A}_\gamma)_{\gamma\in \Gamma}$
of vector subspace of $\mathcal{A}$ such that
$\mathcal{A}=\displaystyle \oplus_{\gamma\in \Gamma}\mathcal{A}_\gamma$. 
An element $x \in \mathcal{A}$ is said to be homogeneous of degree $\gamma$ if $x\in \mathcal{A}_\gamma$. We denote by $ \mathcal{H}(\mathcal{A})$ the set of all the homogeneous elements of $ \mathcal{A}$.
\begin{defn}
	A map $ \varepsilon\colon \Gamma\times \Gamma\to \K^*$ 
	is called a
	skewsymmetric bicharacter on $\Gamma$
	if the following identities hold, for all 
	$a$, $b$, $c\in \Gamma$ 
	\begin{enumerate}[(i)]
		\item $\varepsilon(a,b)\varepsilon(b,a)=1$;
		\item 	$ \varepsilon(a,b+c)=\varepsilon(a,b)\varepsilon(a,c)$;
		\item   $\varepsilon(a+b,c)=\varepsilon(a,c)\varepsilon(b,c)$.
	\end{enumerate}
\end{defn}
If $x$ and $y$ are two homogeneous elements of degree $\gamma$ and $\gamma'$  respectively, then we
shorten the notation by writing $\varepsilon(x,y)$ instead of $\varepsilon(\gamma,\gamma')$.
\subsection{BiHom-Lie  colour  algebras       }
In the following we summarize definitions of BiHom–Lie and BiHom-associative
color algebraic structures  generalizing the well known Hom–Lie and Hom–associative
color algebras (See \cite{Yuan}  ).
\begin{defn}\label{ bihom}(see \cite{bihomC})
	A BiHom-Lie  colour algebra over a field $ \K $ is a $5$-tuple    $ (\mathcal{A},[\cdot,\cdot],\varepsilon,\alpha,\beta) $ consisting of a $\Gamma$-graded vector space $\mathcal{A}$, an even bilinear mapping   $[\cdot,\cdot] \colon \mathcal{A}\times \mathcal{A}\to   \mathcal{A} $ (i.e $[\mathcal{A}_{a},\mathcal{A}_{b}]\subset\mathcal{A} _{a+b} $ for all $a, b \in \Gamma$), a bicharacter $\varepsilon\colon \mathcal{A}\times  \mathcal{A}\to \K^*$ and two even homomorphisms $ \alpha,  $  $ \beta\, \colon \mathcal{A}\to  \mathcal{A} $ such that for all $ x,y,z\in \mathcal{H}(\mathcal{A})$ we have
	\begin{gather*}
	\alpha\circ\beta=\beta\circ\alpha\\
	\alpha\left(  [x,y] \right) =\left[ \alpha(x),\alpha(y) \right] \text{\quad and  \quad  } 
	\beta\left(  [x,y] \right) =\left[ \beta(x),\beta(y) \right] \\
	\left[ \beta(x),\alpha(y)\right] =-\varepsilon(x,y)\left[ \beta(y),\alpha(x) \right], \text{ \quad  ($\varepsilon$-BiHom-skew-symmetry) \quad }\\
	\displaystyle \circlearrowleft_{x,y,z} \varepsilon(z,x)\big[ \beta^2(x) ,\left[ \beta(y),\alpha(z) \right]  \big]=0. \text{ \quad  ($\varepsilon$-BiHom-Jacobi condition)     \quad } 
	\end{gather*}
\end{defn}
\begin{defn}[\textbf{BiHom-associative  colour  algebras}]\label{ bihom}(see \cite{bihomC})
	A BiHom-associative  colour algebra over $ \K $ is a $5$-tuple    $ (\mathcal{A},\mu,\varepsilon,\alpha,\beta) $ consisting of a $\Gamma$-graded vector space $\mathcal{A}$, an even bilinear mapping   $\mu \colon \mathcal{A}\times \mathcal{A}\to   \mathcal{A} $ (i.e $\mu({A}_{a},\mathcal{A}_{b})\subset\mathcal{A} _{a+b} $ for all $a, b \in \Gamma$), a bicharacter $\varepsilon\colon \mathcal{A}\times  \mathcal{A}\to \K^*$ and two even homomorphisms $ \alpha,  $  $ \beta\, \colon \mathcal{A}\to  \mathcal{A} $ such that
	$\alpha\circ\beta=\beta\circ\alpha$ and 
	for all $ x,y,z\in A$ we have 
	$\mu\left( \alpha(x),\mu(y,z)\right) =\mu\left( \mu(x,y),\beta(z)\right) .$\\
	In particular, if $\alpha(\mu(x,y))=\mu(\alpha(x),\alpha(y))$
	and $\beta(\mu(x,y))=\mu(\beta(x),\beta(y))$,
	we call it
	multiplicative Bihom-associative colour algebra. 	
\end{defn}
Inspired by \cite{bihom}, we give the following  constructions  of   BiHom-associative and  BiHom-Lie algebras starting by an ordinary associative colour algebra.
\begin{prop}
	Let $ (\mathcal{A},\mu,\varepsilon) $ 
	be an ordinary associative colour algebra and let 
	$ \alpha,  $  $ \beta\, \colon \mathcal{A}\to  \mathcal{A} $ 
	two
	commuting even linear maps such that 
	$	\alpha\left(  \mu(x,y) \right) =\mu\left(\alpha(x), \alpha(y)      \right) $  and  
	$\beta\left(  \mu(x,y)  \right) =\mu\left(  \beta(x),\beta(y) \right)$
	, for
	all $x,\, y\in  \mathcal{H}(\mathcal{A})$
	. Define the even linear map $\mu' \colon \mathcal{A}\times \mathcal{A}\to   \mathcal{A} ,$ 
	$   \mu'(x,y) =\mu\left( \alpha(x),\beta(y)  \right)$ 
	 (resp.
	$  [\cdot,\cdot] \colon \mathcal{A}\times \mathcal{A}\to   \mathcal{A},$ $[x,y] = \mu\left( \alpha(x),\beta(y)\right)  - \mu\left( \beta(y),\alpha(x)\right) $)
	\\
	Then $(\mathcal{A},\mu',\varepsilon,\alpha,\beta)$ (resp.  $\mathcal{A}_{\alpha,\beta}=(\mathcal{A},[\cdot,\cdot], \varepsilon,\alpha,\beta )$   )  is a BiHom-associative (resp. BiHom-Lie   ) colour algebra.
\end{prop}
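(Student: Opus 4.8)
The plan is to reduce every identity to the associativity of $\mu$, exploiting that $\alpha$ and $\beta$ commute and are multiplicative for $\mu$, together with the bicharacter relations $\varepsilon(a,b)\varepsilon(b,a)=1$, $\varepsilon(a,b+c)=\varepsilon(a,b)\varepsilon(a,c)$ and $\varepsilon(a+b,c)=\varepsilon(a,c)\varepsilon(b,c)$. For the BiHom-associative claim I would expand both sides of the BiHom-associativity condition for $\mu'$: inserting $\mu'(x,y)=\mu(\alpha(x),\beta(y))$ and pushing the outer twists inside by multiplicativity, one finds
\[
\mu'\big(\alpha(x),\mu'(y,z)\big)=\mu\big(\alpha^2(x),\mu(\alpha\beta(y),\beta^2(z))\big),\qquad
\mu'\big(\mu'(x,y),\beta(z)\big)=\mu\big(\mu(\alpha^2(x),\alpha\beta(y)),\beta^2(z)\big),
\]
where $\alpha\beta=\beta\alpha$ is used to align the twists. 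These two expressions coincide precisely because $\mu$ is associative. The multiplicativity of $\alpha$ and $\beta$ with respect to $\mu'$ is then immediate from their multiplicativity for $\mu$ and the commutation $\alpha\beta=\beta\alpha$.

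For the BiHom-Lie claim I would first observe that the bracket is the ($\varepsilon$-twisted) commutator attached to $\mu'$, so multiplicativity of $\alpha$ and $\beta$ for $[\cdot,\cdot]$ follows at once from the preceding paragraph. The $\varepsilon$-BiHom-skew-symmetry is a direct check: expanding $[\beta(x),\alpha(y)]$ and $[\beta(y),\alpha(x)]$, using that $\alpha$ and $\beta$ are even (so that $\varepsilon(\beta(x),\alpha(y))=\varepsilon(x,y)$), and invoking $\varepsilon(x,y)\varepsilon(y,x)=1$, shows that the two brackets differ exactly by the factor $-\varepsilon(x,y)$. It is this commutation factor that forces the bicharacter to appear in the opposite monomial of the commutator.

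The heart of the proof is the $\varepsilon$-BiHom-Jacobi condition. Here I would substitute the definition of $[\cdot,\cdot]$ twice: the inner bracket $[\beta(y),\alpha(z)]$ produces two monomials, and applying the outer bracket with $\beta^2(x)$ (after pushing $\beta$ through $\mu$ by multiplicativity) yields four fully associated triple products per cyclic term, hence twelve monomials in the cyclic sum $\circlearrowleft_{x,y,z}\varepsilon(z,x)\big[\beta^2(x),[\beta(y),\alpha(z)]\big]$. Using associativity of $\mu$ to reassociate and the bicharacter identities to combine the commutation factors, each monomial should cancel against exactly one monomial arising from another term of the cyclic sum. The main obstacle --- and the only genuinely laborious step --- is the bookkeeping of these twelve terms: one must verify that the $\varepsilon$-coefficients generated by moving a factor past a product match correctly, which is exactly where $\varepsilon(a,b+c)=\varepsilon(a,b)\varepsilon(a,c)$ is essential, so that the cancellation is exact. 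This is the colour, BiHom analogue of the classical fact that an associative algebra becomes a Lie algebra under the commutator, and the computation follows the same pattern as in \cite{bihom}.
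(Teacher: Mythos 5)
The paper itself states this proposition without proof (it is offered as ``inspired by \cite{bihom}''), so there is no in-paper argument to compare against; your direct verification is the standard one, and its two main components are sound. The reduction of BiHom-associativity of $\mu'$ to associativity of $\mu$ is exactly right: both sides collapse to $\mu\big(\alpha^{2}(x),\mu(\alpha\beta(y),\beta^{2}(z))\big)=\mu\big(\mu(\alpha^{2}(x),\alpha\beta(y)),\beta^{2}(z)\big)$ using $\alpha\beta=\beta\alpha$ and multiplicativity. Your count for the Jacobi check is also correct: writing $X=\alpha\beta^{2}(x)$, $Y=\alpha\beta^{2}(y)$, $Z=\alpha\beta^{2}(z)$, each cyclic term contributes four triple products, and the twelve monomials cancel in pairs by reassociation, with the bicharacter identities matching the $\varepsilon$-coefficients.

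Two points need repair, however. First, the bracket as printed, $[x,y]=\mu(\alpha(x),\beta(y))-\mu(\beta(y),\alpha(x))$, cannot satisfy $\varepsilon$-BiHom-skew-symmetry as you claim: since $\alpha\beta=\beta\alpha$, one gets $[\beta(x),\alpha(y)]=\mu(\alpha\beta(x),\alpha\beta(y))-\mu(\alpha\beta(y),\alpha\beta(x))=-[\beta(y),\alpha(x)]$ on the nose, with no factor $\varepsilon(x,y)$, and the $\varepsilon$-weighted cyclic Jacobi sum then fails to cancel unless $\varepsilon\equiv 1$. Your computation tacitly uses the intended bracket $[x,y]=\mu(\alpha(x),\beta(y))-\varepsilon(x,y)\,\mu(\beta(y),\alpha(x))$ (the one in \cite{bihomC}), which is the right fix, but you should say explicitly that you are correcting the displayed formula, since your skew-symmetry verification is false for the formula as written. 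Second, your opening remark that the bracket is ``the $\varepsilon$-twisted commutator attached to $\mu'$'' is inaccurate and, if taken literally, would derail the proof: the $\varepsilon$-commutator of $\mu'$ is $\mu(\alpha(x),\beta(y))-\varepsilon(x,y)\,\mu(\alpha(y),\beta(x))$, whereas in the intended bracket the twists stay attached to their original arguments after transposition, giving $\mu(\beta(y),\alpha(x))$ in the second monomial; with the $\mu'$-commutator the twist powers in the twelve monomials no longer match and neither skew-symmetry nor the cancellation goes through. Fortunately you only invoke this identification to get multiplicativity of $\alpha$ and $\beta$ for $[\cdot,\cdot]$, which instead follows directly from their multiplicativity for $\mu$ and $\alpha\beta=\beta\alpha$; the rest of your outline, read with the corrected bracket, is a complete and correct proof.
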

\subsection{ BiHom-Leibniz colour algebras}
First we introduce a definition of  BiHom--Leibniz colour algebra.\\

We consider a $5$-tuple  $ (L,[\cdot,\cdot], \varepsilon,\alpha,\beta) $, 
consisting of a $\Gamma$-graded vector space
$ L $, an even bilinear map $[\cdot,\cdot] \colon L\times L\to  L  $ (i.e. $[L_a,L_b]\subset L_{a+b}$ for all $a,b\in \Gamma$), a bicharacter $\varepsilon\colon L\times L \to \K^*$ and two even homorphism $ \alpha,  $  $ \beta\, \colon L\to  L $  such that for all homogeneous elements $ x,y,z$
we have	
$\alpha\circ\beta=\beta\circ\alpha$, 
$\alpha\left(  [x,y] \right) =\left[ \alpha(x),\alpha(y) \right]$ and 
$  \beta\left(  [x,y] \right) =\left[ \beta(x),\beta(y) \right] .$

\begin{defn}
 The $5$-tuple   $ (L,[\cdot,\cdot], \varepsilon,\alpha,\beta) $ is said to be a
\begin{enumerate}[(i)]
	\item    left BiHom-Leibniz  colour
	algebra if for any homogeneous elements $ x,y,z\in L$  the so-called left Leibniz identity
		\[  \displaystyle  \big[\alpha\beta(x) ,[ y,z ]  \big]=\big[[\beta(x) , y],\beta(z)   \big] + \varepsilon(x,y)  
	\big[\beta(y) ,[\alpha(x),z]   \big]
	\] holds.
	\item	right BiHom-Leibniz  colour
	algebra if for any homogeneous elements $ x,y,z\in L$ if it satisfies  the identity
	\[  \displaystyle \big[[x , y],\alpha\beta(z)   \big] =\varepsilon(y,z)\big[[x,\beta(z)] , \alpha(y)  \big] +   
	\big[\alpha(x),[y,\alpha(z)]   \big].
	\]	
\end{enumerate}	
\end{defn}
\begin{remq}	
	  \begin{enumerate}
	  	\item If $\Gamma=\{1\}$ and $ \varepsilon(x,y)=1 $, for all $x,\, y\in L$ then
	  $ (L,[\cdot,\cdot], \varepsilon) $ is a 	   left  (resp. right) Leibniz  colour
	  algebra if and only if $ (L,[\cdot,\cdot],Id_L,Id_L, \varepsilon) $ is a 	   left  (resp. right) BiHom-Leibniz  colour
	  algebra.
	   	\item If $\Gamma=\Z_2$ and $ \varepsilon(x,y)=(-1)^{|x||y|} $, for all $x,\, y\in L$ then
	  $ (L,[\cdot,\cdot], \varepsilon) $ is a 	   left  (resp. right) Leibniz  
	  superalgebra if and only if $ (L,[\cdot,\cdot],Id_L,Id_L, \varepsilon) $ is a 	   left  (resp. right) BiHom-Leibniz  colour
	  algebra.
	   	\item If $\alpha$ is surjective  then
	$ (L,[\cdot,\cdot],\alpha, \varepsilon) $ is a 	   left  (resp. right) Hom-Leibniz  
	 colour  algebra if and only if $ (L,[\cdot,\cdot],\alpha,\alpha, \varepsilon) $ is a 	   left  (resp. right) BiHom-Leibniz  colour
	algebra.  
	  \end{enumerate}                                   
\end{remq}
\begin{prop}
	                   If  $ (L,[\cdot,\cdot], \varepsilon,\alpha,\beta) $ is a 	   left  (resp. right) BiHom-Leibniz  colour algebra, then \[	\big[\left[\beta(x) , \alpha(y)\right] ,\alpha\beta(z)\big]    \big]=-\varepsilon(x,y)\big[\left[ \beta(y),\alpha(x)\right]  , \alpha\beta(z)  \big]  , \qquad \forall x, y, z \in L.\]
	                 Respectively 	                  
	    \[	\big[\alpha\beta(z) ,\left[\beta(x), \alpha(y)\right]     \big]=-\varepsilon(x,y)\big[\alpha\beta(z),\left[ \beta(y),\alpha(x)\right]    \big]  , \qquad \forall x, y, z \in L,\]  
	for all homogeneous elements $ x, y, z \in L$.               
\end{prop}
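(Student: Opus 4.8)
The plan is to deduce both identities directly from the defining Leibniz colour identities by a single application to each side, exploiting the bicharacter relation $\varepsilon(x,y)\varepsilon(y,x)=1$. The underlying mechanism is the colour--BiHom analogue of the elementary fact that in a left Leibniz algebra $[[a,b],c]=-[[b,a],c]$: the two ``inner derivation'' terms produced when one expands $[[\beta(x),\alpha(y)],\alpha\beta(z)]$ and $[[\beta(y),\alpha(x)],\alpha\beta(z)]$ coincide after rescaling, so that everything reduces to a sign.

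For the left case I would start from the left Leibniz colour identity
\[
\big[\alpha\beta(a),[b,c]\big]=\big[[\beta(a),b],\beta(c)\big]+\varepsilon(a,b)\big[\beta(b),[\alpha(a),c]\big]
\]
and specialise it to $a=x$, $b=\alpha(y)$, $c=\alpha(z)$. Using multiplicativity $[\alpha(u),\alpha(v)]=\alpha([u,v])$, the commutation $\alpha\beta=\beta\alpha$, and the evenness of $\alpha$ (so that $\varepsilon(x,\alpha(y))=\varepsilon(x,y)$), this rearranges to
\[
\big[[\beta(x),\alpha(y)],\alpha\beta(z)\big]=\big[\alpha\beta(x),\alpha([y,z])\big]-\varepsilon(x,y)\big[\alpha\beta(y),\alpha([x,z])\big].
\]
Interchanging the roles of $x$ and $y$ yields in the same way
\[
\big[[\beta(y),\alpha(x)],\alpha\beta(z)\big]=\big[\alpha\beta(y),\alpha([x,z])\big]-\varepsilon(y,x)\big[\alpha\beta(x),\alpha([y,z])\big].
\]
Multiplying the second equation by $-\varepsilon(x,y)$ and using $\varepsilon(x,y)\varepsilon(y,x)=1$ from the bicharacter axiom, its right-hand side becomes exactly the right-hand side of the first equation; comparing gives the claimed left identity.

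For the right case I would run the dual computation, starting from the right Leibniz colour identity
\[
\big[[a,b],\alpha\beta(c)\big]=\varepsilon(b,c)\big[[a,\beta(c)],\alpha(b)\big]+\big[\alpha(a),[b,\alpha(c)]\big]
\]
and specialising it (with $a=\beta(z)$, $b=\beta(x)$, $c=y$, respectively $a=\beta(z)$, $b=\beta(y)$, $c=x$) so that the surviving ``$[\alpha(a),[\,\cdot\,,\,\cdot\,]]$'' term reproduces $\big[\alpha\beta(z),[\beta(x),\alpha(y)]\big]$ and $\big[\alpha\beta(z),[\beta(y),\alpha(x)]\big]$. Expanding both and again invoking $\varepsilon(x,y)\varepsilon(y,x)=1$ together with multiplicativity of $\beta$ shows that the two expansions agree up to the factor $-\varepsilon(x,y)$, which is the asserted right identity.

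The computation is entirely formal; the only point requiring care is the bookkeeping of the bicharacter factors and of the positions of $\alpha$ and $\beta$, since one must repeatedly use evenness of $\alpha,\beta$ (to move them inside $\varepsilon$) and their commutation $\alpha\beta=\beta\alpha$ (to identify $\beta\alpha(z)$ with $\alpha\beta(z)$). The essential --- and only nontrivial --- ingredient is the cancellation forced by $\varepsilon(x,y)\varepsilon(y,x)=1$; without the bicharacter axiom the two inner terms would not match and the sign would not close up.
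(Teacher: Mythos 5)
Your proposal is correct, and there is nothing to compare it against in detail: the paper states this proposition without proof (just as it does for its ungraded analogue in Section 2.2), and your argument is precisely the routine verification being left to the reader. Specialising the left colour identity at $(x,\alpha(y),\alpha(z))$ (resp.\ the right identity at $(\beta(z),\beta(x),y)$ and $(\beta(z),\beta(y),x)$), swapping $x\leftrightarrow y$, and closing the sign with $\varepsilon(x,y)\varepsilon(y,x)=1$ checks out line by line — the only cosmetic remark is that in the right case the two cross terms already coincide verbatim under the swap, so the multiplicativity of $\beta$ you invoke there is not actually needed.
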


\begin{defn}
	If $ (L,[\cdot,\cdot],\alpha, \varepsilon) $ is a left and a right BiHom-Leibniz  colour algebra, then $L$ is called a symmetric
	BiHom-Leibniz colour algebra.          
\end{defn}
\begin{prop}
	Let $ (L,[\cdot,\cdot], \varepsilon,\alpha,\beta) $ be a left BiHom-Leibniz  colour algebra. Then, $ (L,[\cdot,\cdot], \varepsilon,\alpha,\beta) $ is a  symmetric
	BiHom-
	Leibniz colour algebra if and only if \[	\big[\beta(y) ,\left[ \alpha(x),\alpha(z)\right]    \big]=-\varepsilon(y,x+z)\big[\left[ \beta(x),\beta(z)\right]  , \alpha(y)  \big]  , \qquad \forall x, y, z \in L.\]             
\end{prop}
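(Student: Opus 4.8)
The plan is as follows. Since $L$ is already assumed to be a left BiHom-Leibniz colour algebra, being symmetric amounts to verifying the right Leibniz identity as well; so I shall show that, under the standing left identity, the right identity is equivalent to the displayed relation. The first, conceptual, observation is that the displayed relation is nothing but $\varepsilon$-BiHom-skew-symmetry restricted to $L\times[L,L]$: using multiplicativity one has $[\alpha(x),\alpha(z)]=\alpha([x,z])$ and $[\beta(x),\beta(z)]=\beta([x,z])$, and since the bracket is $\Gamma$-graded so that $\deg[x,z]=\deg x+\deg z$, the relation reads $[\beta(y),\alpha(w)]=-\varepsilon(y,w)[\beta(w),\alpha(y)]$ for every $w\in[L,L]$. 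This is the exact analogue of \eqref{symmetric1} and is in the same spirit as the preceding Proposition, which records the $\varepsilon$-skew behaviour of bracketed elements.

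For the forward implication I would first substitute $z\mapsto\alpha(z)$ in the left Leibniz identity; using $\beta\alpha=\alpha\beta$ this yields $[\alpha\beta(x),[y,\alpha(z)]]=[[\beta(x),y],\alpha\beta(z)]+\varepsilon(x,y)[\beta(y),[\alpha(x),\alpha(z)]]$, which I solve for $[\beta(y),[\alpha(x),\alpha(z)]]$ after multiplying by $\varepsilon(y,x)=\varepsilon(x,y)^{-1}$. Next I apply the right Leibniz identity with first entry $\beta(x)$, that is, to $[[\beta(x),y],\alpha\beta(z)]$, obtaining $[[\beta(x),y],\alpha\beta(z)]=\varepsilon(y,z)[[\beta(x),\beta(z)],\alpha(y)]+[\alpha\beta(x),[y,\alpha(z)]]$. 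Substituting this into the previous line, the two copies of $[\alpha\beta(x),[y,\alpha(z)]]$ cancel, and the surviving bicharacter factor simplifies by $\varepsilon(y,x)\varepsilon(y,z)=\varepsilon(y,x+z)$ to produce exactly the displayed relation.

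For the converse I would run the same computation backwards: the rearranged left identity expresses $[\beta(y),[\alpha(x),\alpha(z)]]$ in terms of $[\alpha\beta(x),[y,\alpha(z)]]$ and $[[\beta(x),y],\alpha\beta(z)]$, and feeding in the displayed relation to replace $[\beta(y),[\alpha(x),\alpha(z)]]$ isolates precisely the right Leibniz identity with first entry $\beta(x)$. This recovers the right identity for all triples whose first argument lies in $\operatorname{Im}\beta$, hence for all of $L$ when $\beta$ is surjective; the bicharacter identities $\varepsilon(a,b)\varepsilon(b,a)=1$ and $\varepsilon(a,b+c)=\varepsilon(a,b)\varepsilon(a,c)$ make every step reversible.

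The hard part is purely bookkeeping rather than conceptual: one must track the placement of $\alpha$ and $\beta$ with discipline, noting that the displayed relation carries total twisting-degree three whereas each defining Leibniz identity has degree two. This mismatch is exactly why a single application of $\alpha$ (to the slot $z$) and a single application of $\beta$ (to the first entry, via the substitution $x\mapsto\beta(x)$ in the right identity) are needed before the two identities can be matched term by term, and why the $\varepsilon$-cofactors must be reconciled at each substitution using the multiplicativity and inversion properties of the bicharacter. Once the placements are aligned, the cancellation of the mixed term $[\alpha\beta(x),[y,\alpha(z)]]$ is what drives the whole equivalence.
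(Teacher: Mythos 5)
The paper records this proposition without proof, so there is no in-house argument to measure you against; your derivation is the natural one and it is correct. The forward direction --- substitute $z\mapsto\alpha(z)$ in the left colour identity, apply the right identity at the triple $(\beta(x),y,z)$, cancel the common term $\big[\alpha\beta(x),[y,\alpha(z)]\big]$, and contract $\varepsilon(y,x)\varepsilon(y,z)=\varepsilon(y,x+z)$ --- checks out line by line; the evenness of $\alpha$ and $\beta$ indeed guarantees that no stray $\varepsilon$-factors appear under these substitutions. Your opening observation that the displayed identity is exactly $\varepsilon$-BiHom-skew-symmetry restricted to $L\times[L,L]$, via $[\alpha(x),\alpha(z)]=\alpha([x,z])$ and $[\beta(x),\beta(z)]=\beta([x,z])$ and the grading $\deg[x,z]=\deg x+\deg z$, is a worthwhile conceptual point that the paper leaves implicit.

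Your caveat on the converse is not mere bookkeeping but an actual hypothesis missing from the statement. As you note, running the computation backwards recovers the right identity only when its first argument lies in $\operatorname{Im}\beta$: every outer bracket $\big[[u,\cdot],\cdot\big]$ that the left identity can produce has $u\in\operatorname{Im}\beta$, and the displayed relation only controls brackets of the form $\big[[\beta(x),\beta(z)],\alpha(y)\big]$. The unqualified converse is in fact false. Take $\Gamma$ trivial, $\varepsilon\equiv 1$, $\alpha=\mathrm{id}_L$, $\beta=0$, and any bracket with $[x,[y,z]]\neq 0$ for some $x,y,z$ (for instance $[e_1,e_1]=e_2$, $[e_1,e_2]=e_3$, other basis brackets zero). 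All the standing structure conditions hold, and the left identity and the displayed relation hold trivially because each of their terms contains a $\beta$; but the right identity degenerates to $\big[\alpha(x),[y,\alpha(z)]\big]=0$, which fails at $x=y=z=e_1$. So the proposition should carry a surjectivity assumption on $\beta$, under which your proof is complete; the same remark applies to the paper's untwisted analogue for (non-colour) BiHom-Leibniz algebras in Section 2, whose ``if'' direction is invoked in the Yau-twist construction without such a hypothesis.
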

In the following we construct BiHom–Leibniz colour algebras involving elements of the centroid of colour Leibniz algebras. Let
$ (L,[\cdot,\cdot], \varepsilon) $ be a   Leibniz  colour lalgebra.
An endomorphism $\alpha\in  End(L)_\gamma$  of degree $d$ is said to be an element of degree  $\gamma$ of the centroid if $\alpha([x,y])=[\alpha(x),y]=\varepsilon(\gamma,x)[x,\alpha(y)]  $ for all $x,y\in \mathcal{H}(L) $.
The
centroid of $L$ of degree $\gamma$  is defined by
\[ C_{\gamma}(L)=\{ \alpha\in    End(L)_\gamma\mid    \alpha([x,y])=[\alpha(x),y]=\varepsilon(\gamma,x)[x,\alpha(y)]              \}.                \]
\begin{prop}
	Let $ (L,[\cdot,\cdot],\varepsilon)$ be a
	Leibniz  colour algebra and  let $ \alpha, \beta \colon L\times L\to  L$ two commuting even linear maps such that $\alpha^2=\alpha$ and $\beta^2=\beta.$ Set for $ x,y\in \mathcal{H}(L)$,   $\{x,y\}=[\beta(x),\alpha(y)]$.  Then  $ (L,[\cdot,\cdot], \varepsilon,\alpha,\beta) $ is a
	BiHom-
	Leibniz  colour  algebra.
\end{prop}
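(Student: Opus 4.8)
The plan is to collapse the whole verification onto a single rewriting of the twisted bracket. Since $\alpha$ and $\beta$ are even elements of the centroid (the operative hypothesis from the preceding discussion), for degree-$0$ maps the commutation factor is trivial, $\varepsilon(0,x)=1$, so one may slide them freely across the bracket: $\alpha([a,b])=[\alpha(a),b]=[a,\alpha(b)]$ and likewise for $\beta$. Hence, for all homogeneous $a,b\in L$,
\[ \{a,b\} = [\beta(a),\alpha(b)] = \beta\big([a,\alpha(b)]\big) = \beta\alpha([a,b]) = \alpha\beta([a,b]). \]
Writing $\theta:=\alpha\beta=\beta\alpha$, this says $\{a,b\}=\theta([a,b])$. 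I would then record the structural facts that do all the work: $\theta$ is again an idempotent even centroid element, since $\theta^2=\alpha^2\beta^2=\alpha\beta=\theta$ and $\theta([a,b])=[\theta(a),b]=[a,\theta(b)]$; and the absorption rules $\alpha\theta=\theta\alpha=\beta\theta=\theta\beta=\theta$, all immediate from $\alpha^2=\alpha$, $\beta^2=\beta$ and commutativity.

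Next I would dispatch the ambient axioms of a BiHom-Leibniz colour algebra. The commuting condition $\alpha\beta=\beta\alpha$ is assumed. For multiplicativity I would compute $\alpha(\{x,y\})=\alpha\theta([x,y])=\theta([x,y])$, while $[\alpha(x),\alpha(y)]=\alpha([x,y])$ gives $\{\alpha(x),\alpha(y)\}=\theta\alpha([x,y])=\theta([x,y])$, so the two agree; the identity $\beta(\{x,y\})=\{\beta(x),\beta(y)\}$ follows by the symmetric computation. Thus only the Leibniz identity remains to be checked.

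For the left identity I would expand the three terms $\{\alpha\beta(x),\{y,z\}\}$, $\{\{\beta(x),y\},\beta(z)\}$ and $\{\beta(y),\{\alpha(x),z\}\}$ using $\{a,b\}=\theta([a,b])$ together with idempotency $\theta^2=\theta$ and the absorption rules. Each nested expression collapses to $\theta$ applied to a single iterated bracket: the terms become $\theta([x,[y,z]])$, $\theta([[x,y],z])$ and $\theta([y,[x,z]])$, respectively. Consequently the required identity is precisely $\theta$ applied to the original left Leibniz colour identity $[x,[y,z]]=[[x,y],z]+\varepsilon(x,y)[y,[x,z]]$, which holds by hypothesis; since $\theta$ is linear, applying it preserves the equality. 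The right identity runs through the identical mechanism, reducing to $\theta$ applied to the right Leibniz colour identity, so if $(L,[\cdot,\cdot],\varepsilon)$ is left (resp.\ right, resp.\ symmetric) then so is the twist.

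The only real bookkeeping—and the step I expect to be the main obstacle—is the orderly collapse of the nested brackets: one must slide $\theta$, $\alpha$, $\beta$ out of the correct slot at each stage and verify that the commutation factor $\varepsilon(x,y)$ rides along untouched, which it does precisely because $\alpha$ and $\beta$ carry degree $0$. All the conceptual content, however, is front-loaded into the rewriting $\{a,b\}=\theta([a,b])$ with $\theta$ idempotent in the centroid; after that, every verification reduces mechanically to the original colour Leibniz identities.
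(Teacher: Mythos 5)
Your proposal is correct and follows essentially the same route as the paper: the paper's proof likewise begins by rewriting $\{x,y\}=[\beta(x),\alpha(y)]=\alpha\beta([x,y])$ via the (implicit) centroid hypothesis and evenness, then uses idempotency to collapse all powers of $\alpha$ and $\beta$, so that multiplicativity and the left, right and symmetric BiHom-Leibniz colour identities each reduce to $\alpha\beta$ applied to the corresponding original colour Leibniz identity. Your introduction of $\theta=\alpha\beta$ with its absorption rules is merely a cleaner bookkeeping of the paper's explicit power computations ($\alpha^3\beta^3=\alpha^2\beta^3=\alpha\beta$, etc.), and your observation that the centroid property is the operative hypothesis matches the paper's first line of proof.
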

\begin{proof}
	Since $\alpha$ and $\beta$ are even, then $ \varepsilon(\gamma,x)=1 $. Therefore, $\{x,y\}=[\beta(x),\alpha(y)]=\alpha\beta\left( [x,y] \right) $.
	Suppose that $ (L,[\cdot,\cdot],\varepsilon)$ is a  Leibniz  colour lalgebra. We have 
	\begin{equation*}
	\alpha \left( \{x,y\}\right)  
	= \alpha^2 \beta\left([x,y] \right) = \alpha^4\beta\left(  [x, y]\right)
	=\alpha^2\beta\left(  [ \alpha (x), \alpha(y)   ]\right)
	=\alpha\beta\left[ \alpha (x), \alpha(y) \right] 
	=\{ \alpha(x), \alpha(y)  \} .                
	\end{equation*} 
	Similarly, 	 \begin{equation*}
	\beta \left( \{x,y\}\right) 
	=\alpha \beta^2\left([x,y] \right) 
	= \alpha\beta^4\left(  [x, y]\right)
	=\alpha\beta^2\left(  [ \beta (x),\beta (y)   ]\right)
	=\alpha\beta\left(  [ \beta (x),\beta (y)   ]\right)
	=\{ \beta(x), \beta(y)  \} .  
	\end{equation*}  
	\begin{enumerate}
		\item 
		If   $ (L,[\cdot,\cdot],\varepsilon)$ is a left  Leibniz  colour algebra :
		\begin{equation*}
		\left\lbrace \alpha\beta(x),\{y,z \}    \right\rbrace
		= \left\lbrace  \alpha\beta(x), \alpha\beta\left(  [y,  z]   \right)            \right\rbrace
		= \alpha\beta\big( \left[  \alpha\beta(x) ,\alpha\beta\left(  [y,  z]   \right)    \right]\big)
		= \alpha^3\beta^3\big( \left[  x ,[y,  z]       \right]\big).	       	          
		\end{equation*} 
		\begin{align*}
		&	\left\lbrace \{\beta(x),y\},\beta(z) \}    \right\rbrace
		+\varepsilon(x,y)\left\lbrace  \beta(y),\{\alpha(x) ,z  \}       \right\rbrace \\
		&= \alpha^2\beta^2\left[ [ \beta(x),y   ],\beta(z)\right] 
		+\varepsilon(x,y)\alpha^2\beta^2\left[   \beta(y),[\alpha(x) ,z  ] 	\right]\\
		&= \alpha^2\beta^3\left[ [ x,y   ],z\right] 
		+\varepsilon(x,y)\alpha^2\beta^3\left[   y,[x ,z  ] 	\right]\\ 
		&= \alpha^2\beta^3\left( \left[ [ x,y   ],z\right] 
		+\varepsilon(x,y)\left[   y,[x ,z  ] 	\right]\right) \\ 
		&= \alpha^3\beta^3\big( \left[  x ,[y,  z]       \right]\big)\\
		&= 	  \left\lbrace \alpha\beta(x),\{y,z \}    \right\rbrace	       	          
		\end{align*}  	 
		Then, $ (L,[\cdot,\cdot], \varepsilon,\alpha,\beta) $ be a left
		BiHom-Leibniz  colour algebra.
		\item 
		If   $ (L,[\cdot,\cdot],\varepsilon)$ is a symmetric  Leibniz  colour algebra :
		\begin{equation*}	
		\left\lbrace \beta(x),\{\alpha(y), \alpha(z)\}    \right\rbrace
		=\alpha^2\beta^2 \left[  \beta(x),[\alpha(y), \alpha(z)]    \right]  
		=\alpha^4\beta^3\big( \left[  x,[y, z]    \right] \big) \\
		=\alpha\beta\big( \left[  x,[y, z]    \right] \big) \\	 
		\end{equation*} 	  	 
		\begin{align*}	
		-\varepsilon(x,y+z)\left\lbrace \{\beta(y), \beta(z)\} ,\alpha(x)   \right\rbrace
		&=-\varepsilon(x,y+z)
		\alpha^2\beta^2 \left[  [\beta(y), \beta(z)]  ,  \alpha(x)\right] \\ 
		&=-\varepsilon(x,y+z)\alpha^3\beta^4\big( \left[  x,[y, z]    \right] \big) \\
		&=-\varepsilon(x,y+z)\alpha\beta\big( \left[  [y, z] ,x   \right] \big) \\
		&=\alpha\beta\big( \left[  x,[y, z]    \right] \big) \\
		&=\left\lbrace \beta(x),\{\alpha(y), \alpha(z)\}    \right\rbrace.	 
		\end{align*} 
		Then,   $ (L,[\cdot,\cdot], \varepsilon,\alpha,\beta) $ is a symmetric
		BiHom-Leibniz  colour algebra.
		\item 
		If   $ (L,[\cdot,\cdot],\varepsilon)$ is a right  Leibniz  colour algebra : 
		Reasoning similarly as above proves that
		\begin{equation*}
		\left\lbrace \{x,y \} ,\alpha\beta(z),   \right\rbrace=\alpha\beta \big( \left[  x ,[y,  z]       \right]\big)        
		\end{equation*}
		and 
		\begin{equation*}
		\varepsilon(x,z)\left\lbrace \{x, \beta(z)\} ,\alpha(y)   \right\rbrace+\left\lbrace\alpha(x)  ,\{y,\alpha(z)\}\right\rbrace 
		=\alpha\beta \big( \left[ [x ,z],y      \right]+\left[x,[y,z]\right]  \big)        
		\end{equation*}
		Therefore, $ (L,[\cdot,\cdot], \varepsilon,\alpha,\beta) $ is a right
		BiHom-Leibniz  colour algebra.  
	\end{enumerate}   	 
\end{proof}
\begin{defn}
	Let $  (L,[\cdot,\cdot], \varepsilon,\alpha,\beta)$ be a   Leibniz  colour algebra.	
	The set $C_{\gamma}(L)$ consisting of linear map $d$ of degree $\gamma$ with the property  
	\begin{align*}
	d\circ \alpha &=\alpha \circ d,\quad   d\circ \beta= \beta\circ d \\
	d([x,y])&=[d(x),y]=\varepsilon(\gamma,x)[x,d(y)]    ,\quad\forall x,y\in \mathcal{H}(L),          
	\end{align*}  
	is called the centroids of $L$.               
\end{defn}
In the following proposition, we construct 
BiHom-
Leibniz colour  algebras starting from a 
BiHom-
Leibniz  colour  algebra
and an  two even elements in its centroid.
\begin{prop}
	Let $ (L,[\cdot,\cdot], \varepsilon,\alpha,\beta) $ be a 
	BiHom-
	Leibniz colour  algebra, and $\theta$, $\theta'$ be two even  elements in the centroid
	of $L$ satisfies $\theta\circ \theta' =  \theta' \circ\theta$,    $\theta^2= \theta$ and  $\theta'^2= \theta'$.
	Set for $ x,y\in \mathcal{H}(L)$,   $  [x,y]^{\theta'}=[\theta'(x),y] $   .  Then 	
	$(L,[\cdot,\cdot], \varepsilon,\theta\circ\alpha,\theta\circ\beta)$	
	and $	(L,[\cdot,\cdot]_1^{\theta'}, \varepsilon,\theta\circ\alpha,\theta\circ\beta) $
	are colour 
	BiHom-
	Leibniz algebras.	
\end{prop}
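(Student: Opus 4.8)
The plan is to reduce everything to a couple of elementary facts about even centroid elements and then to a single ``pull-out'' computation. First I would record that, since $\theta$ and $\theta'$ are even, $\varepsilon(0,x)=1$ for every homogeneous $x$, so the centroid conditions collapse to
\[ \theta([x,y])=[\theta(x),y]=[x,\theta(y)], \qquad \theta'([x,y])=[\theta'(x),y]=[x,\theta'(y)], \]
together with $\theta\alpha=\alpha\theta$, $\theta\beta=\beta\theta$ and the same for $\theta'$. In particular the twisted bracket is simply $[x,y]^{\theta'}=[\theta'(x),y]=\theta'([x,y])$.

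The key lemma I would prove first is the pull-out identity $[\theta(u),\theta(v)]=\theta([u,v])$, obtained by moving $\theta$ out of each slot and using $\theta^2=\theta$; likewise $[\theta'(u),\theta'(v)]=\theta'([u,v])$. These immediately give the structural axioms for the new maps $\alpha_1:=\theta\alpha$ and $\beta_1:=\theta\beta$: commutativity, since $\theta\alpha\theta\beta=\theta^2\alpha\beta=\theta\beta\theta\alpha$, and multiplicativity $\alpha_1([x,y])=[\alpha_1(x),\alpha_1(y)]$ (and its $\beta_1$-analogue). The same computations go through verbatim for the $\theta'$-twisted bracket after one extra application of $\theta'^2=\theta'$.

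For the Leibniz identity I would substitute $\alpha_1=\theta\alpha$, $\beta_1=\theta\beta$ into the left (resp. right) colour Leibniz identity and systematically pull every occurrence of $\theta$ (and, for the second algebra, of $\theta'$) to the outside using centrality, collapsing repeated factors with idempotency. Concretely, for $(L,[\cdot,\cdot],\varepsilon,\theta\alpha,\theta\beta)$ each of the three terms reduces to $\theta$ applied to the corresponding term of the original identity, and for $(L,[\cdot,\cdot]^{\theta'},\varepsilon,\theta\alpha,\theta\beta)$ each term reduces to $\theta\theta'$ applied to the corresponding original term. Factoring out the common $\theta$ (resp. $\theta\theta'$) and invoking the original left Leibniz identity for $L$ closes the computation; the right, and hence the symmetric, case is identical after swapping the roles of the two slots.

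The main obstacle is purely bookkeeping: keeping track of which slot each $\theta$ or $\theta'$ was extracted from and repeatedly applying $\theta^2=\theta$, $\theta'^2=\theta'$ and $\theta\theta'=\theta'\theta$ to collapse strings such as $\theta'\theta\theta'$ down to $\theta\theta'$. Once the pull-out lemma is in place there is no conceptual difficulty, since every identity becomes the original one multiplied on the outside by an idempotent; the one point requiring care is that no stray $\varepsilon$-factor be introduced, which is precisely why the evenness of $\theta$ and $\theta'$ (forcing $\varepsilon(0,\cdot)=1$) is essential.
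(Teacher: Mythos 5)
Your proposal is correct: the pull-out identity $[\theta(u),\theta(v)]=\theta([u,v])$ (valid because $\theta$ is even, so no $\varepsilon$-factor appears, and $\theta^2=\theta$) does reduce each term of the twisted left/right colour Leibniz identity to $\theta$ (resp.\ $\theta\theta'$) applied to the corresponding term of the original one, and the structural conditions for $\theta\alpha,\theta\beta$ follow the same way. The paper itself states this proposition without proof, but its proof of the immediately preceding proposition (where $\{x,y\}=[\beta(x),\alpha(y)]=\alpha\beta([x,y])$ is pulled outside and powers are collapsed by idempotency) is exactly the same reduction you describe, so your approach coincides with the paper's implicit one.
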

\section{Representations and cohomology of  (Bi)Hom-Leibniz algebras of type $B_1$}
First, we define a new type of BiHom-Lie algebras.  We call it BiHom-Lie algebras of type $B_1$. With the BiHom-Lie algebra of type $B_1$ we have the following hierarchy of algebras:
\begin{equation*}
\displaystyle   \{ \text{BiHom-Lie type $B_1$}  \}\supseteq_{\beta=id}   \{ \text{Hom-Lie}  \}\supseteq_{\alpha=id} \{ \text{Lie}  \}. 
\end{equation*}
For the rest of this article,we mean by  $ (L,[\cdot,\cdot],\alpha,\beta) $ a 
$4$-tuple consisting of   $ \K $-linear
space       $ L $, two linear maps $ \alpha,  $  $ \beta\, \colon L\to  L $      and  a bilinear map $[\cdot,\cdot] \colon L\times L\to  L  $
, satisfying the
following conditions
\begin{gather*}
\alpha\circ\beta=\beta\circ\alpha,\quad 
\alpha\left(  [x,y] \right) =\left[ \alpha(x),\alpha(y) \right] \text{\quad and  \quad  } 
\beta\left(  [x,y] \right) =\left[ \beta(x),\beta(y) \right]
\end{gather*}
for all $ x,\, y,\, z\in L $.

\begin{defn}\label{ bihom}
	The $4$-tuple $ (L,[\cdot,\cdot],\alpha,\beta) $ is a  BiHom-Lie  algebra of type $B_1$ if
		\begin{align*}
&	\left[ \beta(x),\beta^2(y)\right] =-\left[ \beta(y),\beta^2(x) \right], \text{ \quad   \quad }\\
&	\displaystyle \circlearrowleft_{x,y,z} \big[\alpha(x) ,[ \beta(y),\beta^2(z) ]  \big]=0 .  
	\end{align*}
	for all $ x,\, y,\, z\in L $.
\end{defn}
\begin{remq}
	Obviously, a BiHom-Lie algebra $ (L,[\cdot,\cdot],\alpha,\beta) $  of type $B_1$  for which $\beta=Id_{L} $ is just a
	Hom-Lie algebra $ (L,[\cdot,\cdot],\alpha) $.                             
\end{remq}
\begin{defn}
The  $4$-tuple $ (L,[\cdot,\cdot],\alpha,\beta) $ 
 is called a  left (resp. right) BiHom-Leibniz algebra of type $B_1$  if it satisfies the identity
	\[  \displaystyle  \big[\alpha(x) ,[ \beta(y),\beta(z) ]  \big]=\big[[x , \beta(y)],\alpha(z)   \big] +   
	\big[\alpha(y) ,[\beta(x),\beta(z)]   \big]
	\]
	respectively
	\[  \displaystyle \big[[x,y ],\alpha(z)   \big] =\big[[x,z] , \alpha(y)  \big] +   
	\big[\alpha(x),[y,\beta(z)]   \big].
	\]	  	   
\end{defn}

\begin{prop}
	If $ (L,[\cdot,\cdot],\alpha,\beta) $ is a left BiHom-Leibniz algebra of type $B_1$,
	then \[ 	\left[[y,\beta(x)],\alpha(z)\right]
	=-\left[[x,\beta(y)],\alpha(z)\right]   \]
\end{prop}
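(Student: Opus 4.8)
The plan is to use the defining left identity twice---once as stated and once with the roles of $x$ and $y$ interchanged---and then add the two resulting equations so that the three-fold brackets cancel. Writing the left BiHom-Leibniz identity of type $B_1$ as
\[
\big[\alpha(x),[\beta(y),\beta(z)]\big]=\big[[x,\beta(y)],\alpha(z)\big]+\big[\alpha(y),[\beta(x),\beta(z)]\big],
\]
I would first rearrange it to isolate the target term, obtaining
\[
\big[[x,\beta(y)],\alpha(z)\big]=\big[\alpha(x),[\beta(y),\beta(z)]\big]-\big[\alpha(y),[\beta(x),\beta(z)]\big].
\]

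Next I would interchange $x$ and $y$ (leaving $z$ fixed) in the same defining identity; this is legitimate because the axiom holds for all $x,y,z\in L$. After the analogous rearrangement this yields
\[
\big[[y,\beta(x)],\alpha(z)\big]=\big[\alpha(y),[\beta(x),\beta(z)]\big]-\big[\alpha(x),[\beta(y),\beta(z)]\big].
\]
The key observation is that the right-hand sides of these two displays are exactly negatives of one another, since each is built from the two three-fold brackets $\big[\alpha(x),[\beta(y),\beta(z)]\big]$ and $\big[\alpha(y),[\beta(x),\beta(z)]\big]$ with opposite signs. Adding the two equations therefore gives
\[
\big[[x,\beta(y)],\alpha(z)\big]+\big[[y,\beta(x)],\alpha(z)\big]=0,
\]
which is precisely the claimed relation $\big[[y,\beta(x)],\alpha(z)\big]=-\big[[x,\beta(y)],\alpha(z)\big]$.

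There is no genuine obstacle here: the statement follows purely by symmetrizing the single defining axiom. The only point worth flagging is that no extra hypothesis on $\alpha$ or $\beta$ (multiplicativity, injectivity, surjectivity) and none of the skew-symmetry part of the $B_1$ structure is needed---the left Leibniz identity of type $B_1$ alone suffices, so the argument is valid for an arbitrary left BiHom-Leibniz algebra of type $B_1$.
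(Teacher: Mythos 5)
Your proof is correct: symmetrizing the defining left $B_1$ identity in $x$ and $y$, rearranging, and adding the two equations cancels the three-fold brackets $\big[\alpha(x),[\beta(y),\beta(z)]\big]$ and $\big[\alpha(y),[\beta(x),\beta(z)]\big]$ exactly as you say, yielding $\big[[y,\beta(x)],\alpha(z)\big]=-\big[[x,\beta(y)],\alpha(z)\big]$ with no extra hypotheses on $\alpha$ or $\beta$. The paper states this proposition without writing out a proof, and your symmetrization argument is precisely the standard one it implicitly relies on (the BiHom analogue of the classical fact that a left Leibniz bracket kills symmetrized inner arguments), so your approach matches the intended one.
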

\begin{prop}
	If $ (L,[\cdot,\cdot],\alpha,\beta) $ is a right BiHom-Leibniz algebra of type $B_1$,
	then \[ 	\left[\alpha(x),[z,\beta(y)]\right]
	=-\left[\alpha(x),[y,\beta(z)]\right]   \]
\end{prop}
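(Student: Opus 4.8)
The plan is to apply the defining identity of a right BiHom-Leibniz algebra of type $B_1$ twice — once exactly as stated, and once after interchanging the roles of $y$ and $z$ — and then add the two equations so that the common terms cancel. No appeal to the compatibility conditions $\alpha\beta=\beta\alpha$ or to multiplicativity of $\alpha,\beta$ is needed; the result is purely formal, following from the right-Leibniz identity alone.

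First I would record the defining identity in the form
\begin{equation*}
\big[[x,y],\alpha(z)\big]=\big[[x,z],\alpha(y)\big]+\big[\alpha(x),[y,\beta(z)]\big].
\end{equation*}
Interchanging $y$ and $z$ in this identity (the bilinear bracket makes this substitution legitimate) yields
\begin{equation*}
\big[[x,z],\alpha(y)\big]=\big[[x,y],\alpha(z)\big]+\big[\alpha(x),[z,\beta(y)]\big].
\end{equation*}
Adding these two equations, the terms $\big[[x,y],\alpha(z)\big]$ and $\big[[x,z],\alpha(y)\big]$ appear on both sides and cancel, leaving
\begin{equation*}
0=\big[\alpha(x),[y,\beta(z)]\big]+\big[\alpha(x),[z,\beta(y)]\big],
\end{equation*}
which is exactly the claimed equality $\big[\alpha(x),[z,\beta(y)]\big]=-\big[\alpha(x),[y,\beta(z)]\big]$.

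There is essentially no obstacle to overcome: the only step requiring any foresight is recognizing that the right-hand mixed terms of the two instances coincide, so that a single addition suffices. This mirrors the dual statement for the left case (the preceding proposition), where the left-type identity is used twice in the same way. If anything, the subtlety worth flagging is purely notational — ensuring that in the swapped instance the $\beta$ lands on the correct argument, i.e.\ that $[z,\beta(y)]$ rather than $[\beta(z),y]$ arises — which follows directly from the placement of $\beta$ in the defining identity.
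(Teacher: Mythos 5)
Your proof is correct: instantiating the defining right identity $\big[[x,y],\alpha(z)\big]=\big[[x,z],\alpha(y)\big]+\big[\alpha(x),[y,\beta(z)]\big]$ twice, once with $y$ and $z$ interchanged, and adding so the square-bracketed outer terms cancel is precisely the intended argument --- the paper states this proposition without proof, treating it as the routine BiHom analogue of the classical right-Leibniz fact $[[x,y],a]=-[[y,x],a]$ quoted in its introduction. Your observation that no compatibility or multiplicativity assumptions on $\alpha,\beta$ are used is also accurate.
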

\begin{defn}
	$(L,[\cdot,\cdot],\alpha,\beta) $ is called 	
	a symmetric
	BiHom-Leibniz algebra of type $ B_1 $ if it is a left and a right BiHom-Leibniz algebra  of type $  B_1$.	
\end{defn}
\begin{prop}
	A $4$-tuple    $ (L,[\cdot,\cdot],\alpha,\beta) $ is a		symmetric BiHom-Leibniz algebra of type $ B_1 $	
 if and only if it satisfies 
	\begin{align*}
	& \displaystyle \big[[x,y ],\alpha(z)   \big] =\big[[x,z] , \alpha(y)  \big] +   
	\big[\alpha(x),[y,\beta(z)]   \big];\\
	&	\left[\alpha(y),[\beta(x),\beta(z)]\right]
	=-\left[ [x,z],\alpha\beta(y)\right] .    
	\end{align*}	    
\end{prop}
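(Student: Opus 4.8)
The plan is to exploit the observation that the first displayed identity in the statement is literally the defining identity of a right BiHom-Leibniz algebra of type $B_1$. Hence ``symmetric of type $B_1$'' means precisely that the left identity
\[ [\alpha(x),[\beta(y),\beta(z)]]=[[x,\beta(y)],\alpha(z)]+[\alpha(y),[\beta(x),\beta(z)]] \qquad (L) \]
and the right identity (the first displayed equation, which I call $(R)$) both hold, while the first equation of the statement already \emph{is} $(R)$. Thus the whole assertion collapses to a single equivalence: \emph{assuming $(R)$, the identity $(L)$ holds if and only if the second displayed equation holds}. I would prove this equivalence directly, which simultaneously settles both directions of the stated ``if and only if''.

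The key manipulation is to feed the substitution $y\mapsto\beta(y)$ into the right identity $(R)$. Since $\beta$ maps $L$ into $L$ this is legitimate, and it yields
\[ [[x,\beta(y)],\alpha(z)]=[[x,z],\alpha\beta(y)]+[\alpha(x),[\beta(y),\beta(z)]], \qquad (R') \]
where I write $\alpha\beta(y)$ for $\alpha(\beta(y))$ using $\alpha\beta=\beta\alpha$. Identity $(R')$ is the useful one because it expresses the same ``outermost'' term $[\alpha(x),[\beta(y),\beta(z)]]$ that sits on the left-hand side of $(L)$, and it also shares the summand $[[x,\beta(y)],\alpha(z)]$ with the right-hand side of $(L)$.

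It then remains only to compare $(L)$ and $(R')$. Both express $[\alpha(x),[\beta(y),\beta(z)]]$; subtracting them and cancelling the common term $[[x,\beta(y)],\alpha(z)]$ leaves
\[ [\alpha(y),[\beta(x),\beta(z)]]=-[[x,z],\alpha\beta(y)], \]
which is exactly the second equation of the statement. Because every step here is an equality rewriting carried out under the standing hypothesis $(R)$, the chain is reversible: given $(R)$ together with the second equation, one recovers $(L)$ by the identical cancellation applied in reverse. This establishes $(L)\Leftrightarrow(\text{second equation})$ in the presence of $(R)$, and hence the proposition.

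I expect no genuine obstacle. The only non-mechanical move is recognizing that the right way to reconcile $(L)$ with $(R)$ is the substitution $y\mapsto\beta(y)$ producing $(R')$; after that the argument is pure bookkeeping of three bracket terms. The one thing I would double-check is that no multiplicativity of $\alpha$ or $\beta$ is secretly invoked, and indeed it is not --- only the commutation $\alpha\beta=\beta\alpha$ is used, and merely for notational tidiness.
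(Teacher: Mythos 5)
Your proof is correct: the first displayed identity is literally the right type-$B_1$ identity, and your substitution $y\mapsto\beta(y)$ in it, followed by cancelling the common term $\big[[x,\beta(y)],\alpha(z)\big]$ against the left type-$B_1$ identity, yields exactly the second displayed condition, with every step an equality rewriting that is reversible under the standing hypothesis $(R)$. The paper states this proposition without any proof, so there is nothing to compare against; your argument is the natural (and presumably intended) verification, and you are right that nothing beyond $\alpha\beta=\beta\alpha$ as notation is used --- in particular no multiplicativity or surjectivity of the structure maps is needed.
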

\subsection{  Representations of BiHom-Leibniz algebras  of type $ B_1 $ }
Lie algebra cohomology was introduced by Chevalley and Eilenberg \cite{Chevalley}. For Hom-Lie
algebra, the cohomology theory has been given by \cite{AbdenacerSil,Sheng}. A cohomology of BiHom-Lie 
algebras were introduced and investigated in \cite{Yongsheng}. 
We refer the reader to \cite{Loday2,Loday3,Pirashvili,Cheng1} for more information about  Leibniz
representations and Leibniz cohomologies.\\
In the following we
 define  a representations
of    (Bi)Hom–Leibniz algebras of  type $B_1$  and the corresponding coboundary operators.
We show that one can obtain the direct sum symmetric  (Bi)Hom–Leibniz algebras $(L\oplus V,[\cdot,\cdot]_f,\alpha+\alpha_V,\beta+\beta_V)$ of  type $B_1$.\\



\begin{defn}
	Let $V$ be a vector space,  $ \alpha_V,\, \beta_{V}\in End(V) $ and  $r,l\colon L \to End(V)$
	be two  linear maps satisfying
		\begin{align*}
	\alpha_V	\circ l(x)=l(\alpha(x))\circ \alpha_V&;\quad
	\alpha_	V\circ r(x)=r(\alpha(x))\circ\alpha_V ;\\
	\beta_V	\circ l(x)=l(\alpha(x))\circ \beta_V&;\quad
\beta_V	\circ r(x)=r(\alpha(x))\circ\beta_V ;
	\end{align*}
	\begin{enumerate}[(i)]	
	\item If $ (L,[\cdot,\cdot],\alpha,\beta) $ is a left BiHom-Leibniz algebra of type $B_1$, then we say that $(r,l)$ is a left representation of
$L$ in $V$ if for all $x,\, y\in L,\, v\in V$:		
\begin{align*}	
		l([x,\beta(y)])\circ\alpha_V &=l(\alpha(x))\circ l(\beta(y))\circ\beta_V-l(\alpha(y))\circ l(\beta(x))\circ\beta_V ;\\ 
		r\circ \beta([x,y])\circ\alpha _V&=l(\alpha(x))\circ r(\beta(y))\circ\beta_V-r(\alpha(y))\circ l(x)\circ\beta_V ;\\ 
			r\circ \beta([x,y])\circ \alpha_V&=r\left( \alpha(y)\right)\circ r(\beta(x))+l \left( \alpha(x)\right)\circ  r(\beta(y)) \circ\beta_V  .
		\end{align*}	
		\item If $L$ is a right BiHom-Leibniz algebra of type $B_1$, then we say that $(r,l)$ is a right representation of
		$L$ in $V$ if for all $x,\, y\in L,\, v\in V$:
		\begin{align*}
		l([x,y])\circ \alpha_V &=l(\alpha(x))\circ l(y)\circ \beta_V+            r(\alpha(y))\circ l(x);\\ 
			l([x,y])\circ \alpha_V&=    r(\alpha(y))\circ l(x)-                l(\alpha(x))\circ r(\beta(y));\\ 
		r([x,\beta(y)])\circ \alpha_V&=r \left( \alpha(y)\right)\circ  r(x)-  r\left( \alpha(x)\right)\circ r(y).
		\end{align*}	 	
	\end{enumerate}                                                           
\end{defn}
\begin{example}
	Let  $(L,[\cdot,\cdot],\alpha,\beta) $  be a left (resp. right ) BiHom-Leibniz algebra of type $B_1$. Then, we
	consider the maps $ad_{k}\colon L\to End(L) $ and $Ad_{k,l}\colon L\to End(L) $ 
	Defined by 		
	by  	
	$ad_{k,l}(a)(x)=[x,\alpha^k\beta^{l}(a)] $,  $Ad_{k}(a)(x)=[\alpha^k\beta^{l}(a),x]$, $\forall x\in L$. Therefore, $(ad_{k,l},Ad_{k,l})$ is a left representation (resp. a right
	representation ) of $L$ in $L$ called the left (resp. right) $\alpha^k\beta^{l}$-adjoint representation of $L$.
\end{example}
In the rest  of this article, if $r,l\colon L \to End(V)$, we denote  $r(x)(v)=[v,x]_V$ and $l(x)(v)=[x,v]_V$, $\forall x\in L,\, v\in V$ and  if $(r,l)$ is a representation  of $L$  we say $(V,[\cdot,\cdot]_{V},\alpha_{V},\beta_{V})$ is a representation  of $L$ (or a $L$-module).
\begin{prop}
		Let  $(L,[\cdot,\cdot],\alpha,\beta) $  be a left  BiHom-Leibniz algebra of type $B_1$ and $(V,[\cdot,\cdot]_{V},\alpha_{V},\beta_{V})$ be a left representation of $L$ . Then \[ \left[[v,\beta(x)]_{V},\alpha(y)\right]_{V}=- \big[[x ,\beta_V (v) ]_{V},\alpha(y)   \big]_{V}  \]  for all $x,\,y\in L,\, v\in V$.                                                       
\end{prop}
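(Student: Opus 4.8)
The plan is to read the claim as an operator identity and extract it directly from the two defining axioms of a left representation that happen to share the same left-hand side. Writing $l(x)(v)=[x,v]_V$ and $r(x)(v)=[v,x]_V$ as in the definition, the target equality
\[
\big[[v,\beta(x)]_V,\alpha(y)\big]_V=-\big[[x,\beta_V(v)]_V,\alpha(y)\big]_V
\]
translates, after factoring out the arbitrary $v\in V$, into the operator statement
\[
r(\alpha(y))\circ r(\beta(x)) = -\,r(\alpha(y))\circ l(x)\circ\beta_V,
\]
since $[[v,\beta(x)]_V,\alpha(y)]_V=\big(r(\alpha(y))\circ r(\beta(x))\big)(v)$ and $[[x,\beta_V(v)]_V,\alpha(y)]_V=\big(r(\alpha(y))\circ l(x)\circ\beta_V\big)(v)$. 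So the whole problem reduces to proving that single operator equation.

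First I would observe that the second and third identities in the definition of a left representation both have left-hand side $r(\beta([x,y]))\circ\alpha_V$, namely
\[
r(\beta([x,y]))\circ\alpha_V = l(\alpha(x))\circ r(\beta(y))\circ\beta_V-r(\alpha(y))\circ l(x)\circ\beta_V
\]
and
\[
r(\beta([x,y]))\circ\alpha_V = r(\alpha(y))\circ r(\beta(x))+l(\alpha(x))\circ r(\beta(y))\circ\beta_V.
\]
Equating the two right-hand sides and cancelling the common summand $l(\alpha(x))\circ r(\beta(y))\circ\beta_V$ leaves exactly $r(\alpha(y))\circ r(\beta(x)) = -\,r(\alpha(y))\circ l(x)\circ\beta_V$, i.e. the operator form above. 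Evaluating on an arbitrary $v\in V$ and rewriting the compositions back as brackets then yields the stated identity.

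The step I expect to matter most is purely bookkeeping: matching the composition order of $l$ and $r$ against the nested bracket notation, and in particular tracking where $\beta_V$ lands (it decorates $v$ in the right-hand term but not in the left). No Leibniz- or Jacobi-type manipulation is actually needed here, and the compatibility relations between $\alpha_V,\beta_V$ and $l,r$ are not invoked to derive the equality; the result is a formal consequence of the two representation axioms sharing a left-hand side. This is precisely the module-valued analogue of the identity $[[y,\beta(x)],\alpha(z)]=-[[x,\beta(y)],\alpha(z)]$ valid inside $L$ itself, with the second argument $y$ replaced by $v\in V$, which provides a convenient sanity check on the final form.
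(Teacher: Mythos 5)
Your proof is correct: the paper states this proposition without any proof, and your derivation---equating the second and third axioms of a left representation, which share the left-hand side $r(\beta([x,y]))\circ\alpha_V$, cancelling the common summand $l(\alpha(x))\circ r(\beta(y))\circ\beta_V$, and evaluating the resulting operator identity $r(\alpha(y))\circ r(\beta(x))=-\,r(\alpha(y))\circ l(x)\circ\beta_V$ at an arbitrary $v\in V$---is exactly the computation the definition is set up to yield, with the bookkeeping of $\beta_V$ handled correctly. Your closing sanity check is also apt: this is precisely the module-valued analogue of the paper's (likewise unproved) proposition $\left[[y,\beta(x)],\alpha(z)\right]=-\left[[x,\beta(y)],\alpha(z)\right]$ for $L$ itself.
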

\begin{prop}
	Let  $(L,[\cdot,\cdot],\alpha,\beta) $  be a right  BiHom-Leibniz algebra of type $B_1$  and $(V,[\cdot,\cdot]_{V},\alpha_{V},\beta_{V})$ be a right representation of $L$. Then
\[  	\big[\alpha(x),[y,\beta_V(v)]_{V}   \big]_{V}=- 	\big[\alpha(x) ,[v,\beta(y)]  \big]_{V}                                \]	
	
for all $x,\,y\in L,\, v\in V$. 	                                                   
\end{prop}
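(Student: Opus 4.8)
The plan is to obtain the desired identity by simply comparing the first two defining relations of a right representation, which — once applied to a vector $v\in V$ — share the identical left-hand side. First I would unwind the composed-operator axioms into the module-bracket notation fixed just before the statement, namely $l(x)(v)=[x,v]_V$ and $r(x)(v)=[v,x]_V$, so that each endomorphism equation becomes an equation in $V$.

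Applying the first defining relation $l([x,y])\circ\alpha_V=l(\alpha(x))\circ l(y)\circ\beta_V+r(\alpha(y))\circ l(x)$ to $v$ yields
\[
[[x,y],\alpha_V(v)]_V=[\alpha(x),[y,\beta_V(v)]_V]_V+[[x,v]_V,\alpha(y)]_V,
\]
while applying the second defining relation $l([x,y])\circ\alpha_V=r(\alpha(y))\circ l(x)-l(\alpha(x))\circ r(\beta(y))$ to the same $v$ yields
\[
[[x,y],\alpha_V(v)]_V=[[x,v]_V,\alpha(y)]_V-[\alpha(x),[v,\beta(y)]_V]_V.
\]

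Since both right-hand sides equal the common expression $[[x,y],\alpha_V(v)]_V$, I would equate them. The term $[[x,v]_V,\alpha(y)]_V$ occurs in both and cancels, leaving precisely
\[
[\alpha(x),[y,\beta_V(v)]_V]_V=-[\alpha(x),[v,\beta(y)]_V]_V,
\]
which is the asserted equality. This mirrors the preceding proposition for left representations, where the analogous identity is extracted from the two left-representation axioms.

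The only delicate point — and the sole place an error could creep in — is the faithful translation of the composed-operator axioms into bracket form, in particular tracking which slot of each $[\cdot,\cdot]_V$ carries the $L$-element and which carries the $V$-element. Once that bookkeeping is carried out correctly, no structural identity of the right BiHom-Leibniz algebra of type $B_1$ is actually needed: the conclusion is a purely formal consequence of the two representation axioms sharing a left-hand side, so there is no genuine computational obstacle.
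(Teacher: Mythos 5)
Your proposal is correct: applying the two right-representation axioms sharing the left-hand side $l([x,y])\circ\alpha_V$ to a vector $v$, translating into bracket form, and cancelling the common term $[[x,v]_V,\alpha(y)]_V$ yields exactly the asserted identity $\big[\alpha(x),[y,\beta_V(v)]_{V}\big]_{V}=-\big[\alpha(x),[v,\beta(y)]_V\big]_{V}$. The paper states this proposition without an explicit proof, and your argument is plainly the intended one, mirroring the paper's analogous algebra-level proposition that $\left[\alpha(x),[z,\beta(y)]\right]=-\left[\alpha(x),[y,\beta(z)]\right]$ in a right BiHom-Leibniz algebra of type $B_1$.
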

\begin{defn}
	Let  $(L,[\cdot,\cdot],\alpha,\beta) $  be a left (resp. right ) BiHom-Leibniz algebra of type $B_1$.	
Let $V$ be a vector space,  $ \alpha_V,\, \beta_{V}\in End(V) $ and  $r,l\colon L\to End(V)$
be two  linear maps.	
Then, we say that $(r, l)$ is a
representation of $L$ in $V$ if $(r, l)$ is a left and a right representation of $L$ in
$V$ .	
\end{defn}

\begin{defn}
	Let $ (L,[\cdot,\cdot],\alpha,\beta) $ be a 	 BiHom-Leibniz algebra of type $ B_1 $  .	
	A representation $ (V,[\cdot,\cdot]_{V},\alpha_{V},\beta_{V}) $ is called:
	\begin{enumerate}[(i)]	
		\item trivial if $[x,\beta_{V}(v)]_{V}=[v,\beta(x)]_{V}=0,\quad \forall v\in V, x\in L. $
		\item adjoint if $(V,[\cdot,\cdot]_{V},\alpha_{V},\beta_{V})= (L,[\cdot,\cdot],\alpha,\beta) $	
	\end{enumerate}	
\end{defn}
\begin{prop}
	 Let $ (L,[\cdot,\cdot],\alpha,\beta) $ be a 	symmetric BiHom-Leibniz algebra of type $ B_1 $.  A $4$-tuples  $(V,[\cdot,\cdot]_{V},\alpha_{V},\beta_{V})$  is  a symmetric  representation of $V$ if and only if.
	 \begin{align*}
	 \alpha_V([x,v]_{V})&=[\alpha(x),\alpha_V(v)]_{V};\qquad 
	 \alpha_V([v,x]_{V})=[\alpha_V(v),\alpha(x)]_{V};\\
	 \beta_V([x,v]_{V})&=[\beta(x),\beta_V(v)]_{V};\qquad 
	 \beta_V([v,x]_{V})=[\beta_V(v),\beta(x)]_{V};\\
	 \displaystyle  	\big[  \left[ v,x\right]_{V} ,\alpha(y) ]\big]_{V}
	 &=\big[[v,y]_{V}, \alpha(x)   \big]_{V} +   
	 \big[\alpha_{V} (v),\left[x ,\beta(y)\right]_{V}   \big]_{V};\\
	 \big[ [x,y], \alpha_V(v)\big]_{V}
	 &=\big[[x,v]_{V},\alpha(y)   \big]_{V} +   
	 \big[\alpha(x),[y,\beta_V(v)]_{V}   \big]_{v};\\
	 \big[  [x,v]_{V},\alpha(y)\big]_{V}
	 &=\big[[x,y],  \alpha_V(v)  \big]_{V} +   
	 \big[\alpha(x) ,[v,\beta(y)]_{V}  \big]_{V};\\
	 \left[\alpha(x),[\beta(y),\beta_{V}(v)]\right]_{V}
	 &=-\left[ [y,\beta_{V}(v)],\alpha\beta(x)\right]_{V};\\
	 \left[\alpha(x),[\beta_{V}(v),\beta(y)]\right]_{V}
	 &=-\left[ [v,y]_{V},\alpha\beta(x)\right]_{V};\\
	 \left[\alpha(x),[\beta(y),\beta_{V}(v)]\right]_{V}
	 &=-\left[ [y,v]_{V},\alpha\beta(x)\right]_{V}.
	 \end{align*}                                  
\end{prop}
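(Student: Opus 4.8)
The plan is to prove the equivalence by unwinding the definition of a symmetric representation and matching the resulting operator identities against the displayed list. By definition, $(V,[\cdot,\cdot]_V,\alpha_V,\beta_V)$ is a symmetric representation precisely when the pair $(r,l)$ is simultaneously a left and a right representation of $L$, where we write $l(x)(v)=[x,v]_V$ and $r(x)(v)=[v,x]_V$. My first step is to translate every condition into this operator language. The four multiplicativity identities relating $\alpha_V,\beta_V$ to $l,r$ are exactly the first two displayed lines (the equalities $\alpha_V([x,v]_V)=[\alpha(x),\alpha_V(v)]_V$, and its companions), so these require nothing beyond rewriting. Next I would check that the three ``Leibniz-type'' lines (those with a two-term right-hand side) are nothing but the three right-representation axioms rewritten with $l(x)(v)=[x,v]_V$ and $r(x)(v)=[v,x]_V$; a direct substitution confirms that each matches one axiom after collecting the compositions $r(\alpha(y))\circ r(x)$, $l(\alpha(x))\circ l(y)\circ\beta_V$ and the mixed terms.

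The conceptual core is to recognize that this Proposition is the module-theoretic mirror of the earlier Proposition characterizing symmetric BiHom-Leibniz algebras of type $B_1$ (symmetric $\iff$ right Leibniz identity together with the symmetry identity $[\alpha(y),[\beta(x),\beta(z)]]=-[[x,z],\alpha\beta(y)]$). Accordingly, the last three displayed equalities are precisely the module versions of that single symmetry identity, one for each way of placing the module element $v$ in the role of $x$, $y$ or $z$. So I would prove the equivalence ``left representation $+$ right representation $\iff$ right representation $+$ symmetry equations'' by carrying the algebra-level argument verbatim into $V$.

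For the forward direction I would assume $(r,l)$ is both a left and a right representation. The compatibility identities and the right-representation axioms are then immediate, yielding the first five displayed lines. To obtain the symmetry equations I would combine a left-representation axiom with the corresponding right-representation axiom so that the mixed compositions $l\circ r$ and $r\circ l$ cancel, leaving an identity of the shape $l(\alpha(x))\circ l(\beta(y))\circ\beta_V=-r(\alpha\beta(x))\circ l(y)\circ\beta_V$ together with its two companions; rewriting these in bracket form gives the last three lines. Conversely, assuming all displayed identities, the right-representation axioms hold outright, and I would recover each left-representation axiom by substituting the relevant symmetry equation into the appropriate right-representation axiom, using a right axiom to expand $l([x,\beta(y)])$ or $r(\beta([x,y]))$ and then replacing an $l\circ l$ or $l\circ r$ block by an $r\circ l$ or $r\circ r$ block via the symmetry identities.

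The main obstacle will be the bookkeeping of the twisting maps: one must repeatedly push $\alpha_V$ and $\beta_V$ across $l$ and $r$ through the four compatibility relations, and keep careful track of which arguments carry $\alpha$, $\beta$, $\alpha\beta$ or $\beta^2$. I would also observe that the three symmetry equations are interrelated through the compatibility identities, so that establishing one of them and transporting it by $\alpha_V$ and $\beta_V$ should deliver the remaining two. Apart from this twist-tracking, every step is a routine substitution, so no structural input beyond the earlier symmetric-algebra Proposition is needed.
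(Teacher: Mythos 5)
Your overall route---unwinding ``symmetric representation $=$ left representation $+$ right representation'' into operator identities and matching them line by line against the displayed list---is exactly the verification this proposition calls for; note that the paper states the proposition with no proof at all, so there is no authorial argument to diverge from, and your plan is the intended routine check. Your identification of the three two-term lines with the three right-representation axioms is correct: writing $l(x)(v)=[x,v]_V$ and $r(x)(v)=[v,x]_V$, the axioms $r([x,\beta(y)])\circ\alpha_V=r(\alpha(y))\circ r(x)-r(\alpha(x))\circ r(y)$, $l([x,y])\circ\alpha_V=l(\alpha(x))\circ l(y)\circ\beta_V+r(\alpha(y))\circ l(x)$ and $l([x,y])\circ\alpha_V=r(\alpha(y))\circ l(x)-l(\alpha(x))\circ r(\beta(y))$ are precisely the fifth, sixth and seventh displayed identities, and your observation that the last block is the module-theoretic mirror of the proposition characterizing symmetric algebras of type $B_1$ is the right organizing idea.

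There is, however, a concrete step that fails as you stated it. Combining the left axiom $l([x,\beta(y)])\circ\alpha_V=l(\alpha(x))\circ l(\beta(y))\circ\beta_V-l(\alpha(y))\circ l(\beta(x))\circ\beta_V$ with the sixth displayed line (with $y$ replaced by $\beta(y)$) yields $l(\alpha(x))\circ l(\beta(y))\circ\beta_V=-r(\alpha\beta(x))\circ l(y)$, i.e.\ the \emph{tenth} line, with no trailing $\beta_V$. The identity you display, $l(\alpha(x))\circ l(\beta(y))\circ\beta_V=-r(\alpha\beta(x))\circ l(y)\circ\beta_V$, is the \emph{eighth} line as printed, and it does not follow from the representation axioms: taken together with the tenth line it would force $[[y,\beta_V(v)]_V,\alpha\beta(x)]_V=[[y,v]_V,\alpha\beta(x)]_V$ for all $x,y,v$, which is no consequence of the definitions. (Lines eight and ten share the same left-hand side; the eighth is almost surely a misprint, the genuinely derivable third companion being $[[v,\beta(x)]_V,\alpha(y)]_V=-[[x,\beta_V(v)]_V,\alpha(y)]_V$, which is what drops out of subtracting the two $r(\beta([x,y]))\circ\alpha_V$-axioms and which the paper records as a separate proposition.) Relatedly, your claim that the last three lines realize ``one placement of $v$ in each of the roles $x$, $y$, $z$'' does not match the printed statement (the outer-left placement is absent and the inner-right placement occurs twice), and the $\beta_V$-compatibility lines agree with the paper's definition of representation (which reads $\beta_V\circ l(x)=l(\alpha(x))\circ\beta_V$) only after emending $\alpha$ to $\beta$, so they are not ``nothing beyond rewriting.'' In short: your approach is the correct and intended one, but it proves a corrected statement; as printed, the forward implication fails exactly at the eighth line, at the step where your cancellation scheme introduced the spurious $\beta_V$.
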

\begin{remq}
 If $(V,[\cdot,\cdot]_{V},\alpha_{V},\beta_{V})$  is  a  symmetric representation of a symmetric BiHom-Leibniz algebra
 $ (L,[\cdot,\cdot],\alpha,\beta) $ 	 of type $ B_1 $. Let
 us consider two linear maps $Ad_{n,m},ad_{n,m}\colon L\to End(V)$ defined by 
 \[  ad_{n,m}(x)(v) =[x,\alpha^{n}\beta^{m}(v)],\quad Ad_{n,m}(x)(v) =[\alpha^{n}\beta^{m}(v),x],\quad    \forall  x\in L,\, v\in V.                                                \]
 Then $ (Ad_{n,m},ad_{n,m})$ is symmetric representation of $L$ in $ V $.                            
\end{remq}

\subsection{  Cohomology   of 	symmetric BiHom-Leibniz algebras  of type $ B_1 $ }
Let $ (L,[\cdot,\cdot],\alpha,\beta) $ be a 	symmetric BiHom-Leibniz algebra of type $ B_1 $  and $(V,[\cdot,\cdot]_{V},\alpha_{V},\beta_{V})$  be a symmetric representation of $V$. Denote \[    C^k(L,V) =Hom(L^k,V),\qquad k   \geq    0,  \]
and \[   C^k_{(\alpha,\alpha_V)}(L,V)=\{ f\in C^k(L,V)\mid\, f\circ \alpha=\alpha_V \circ f      \}    .  \]
Let $\delta^k:C^k_{(\alpha,\alpha_V)}(L,V)\to    C^{k+1}_{(\alpha,\alpha_V)}(LV,)       $ be a $ k $-Homomorphism  defined by 
\begin{gather*}
\delta^{k}_{n,m}(f)(x_{0},\dots,x_{k}) \nonumber\\
=\sum_{0< t\leq k}(-1)^{t+1}f
\Big([x_{0},x_t],\cdots,\widehat{x_{t}},\cdots,\alpha(x_{k})\Big)\\
+ 
\sum_{0< s < t\leq k}(-1)^{t+1}
(f)
\Big(\alpha(x_{0}),\cdots,\alpha(x_{s-1}),[x_{s},\beta(x_{t})],\alpha(x_{s+1}),\cdots,\widehat{x_{t}},\cdots,\alpha(x_{k})\Big)  \label{ cobbTypeA}
\end{gather*}
\begin{gather*}
-\Big[\alpha^{n+k-1}\beta^{m+k-1}(x_{0}), f\left( x_{1},\beta(x_2),\cdots,\beta(x_k)\right)  \Big]_{V}\\
+\sum_{s=1}^{k}(-1)^{s}\Big[f\left( x_{0},\cdots,\widehat{x_{s}},\dots,x_{k}\right) ,\alpha^{n+k-1}\beta^{m+k-1}(x_{s}) \Big]_{V}\label{def cobbbb}
\end{gather*}
where $ x_0,\cdots,x_k\in L $ and $ \hat{x}_{i} $ means that $ x_{i} $
is omitted.
The pair $(\oplus_{k>0}C^{k}_{\alpha ,\alpha_V)}(L,\ V),\{\delta^{k}\}_{k>0})$ defines a chomology complex, that is
$\delta^{k} \circ \delta^{k-1}=0.$

\begin{itemize}
	\item The $k$-cocycles space is defined as $Z^{k}(L, V)=\ker \ \delta^{k}.$
	\item  The $k$-coboundary space is defined as    $B^{k}(L, V)=Im \ \delta^{k-1}.$
	\item The $k^{th}$ cohomology  space is the quotient  $H^{k}(L, V)=Z^{k}(L, V)/ B^{k}(L, V). $ \\
\end{itemize}

	If $(ad_{n,m},Ad_{n,m}) $ is a symmetric adjoint representation of $L$. Then any $1$-cocycle    $f\in Z^{1}(L, L)$    is called a $\alpha^{n}\beta^{m}$-derivation of $L$.  

\subsection{ Extensions of                 BiHom-Leibniz algebras  of type $ B_1 $            }

In this section we extend extensions theory of Leibniz algebras introduced in \cite{Loday3}  to  BiHom-Leibniz algebras  of type $ B_1 $        case.

An extension of a   BiHom-Leibniz algebras   $(L,[\cdot,\cdot],\alpha,\beta)$  of type $ B_1 $ by $L$-module  $(V,[\cdot,\cdot]_{V},\alpha_V,\beta_V)$ is an exact sequence

$$0\longrightarrow (V,\alpha_V,\beta_{V})\stackrel{i}{\longrightarrow} (\widetilde{L},\widetilde{\alpha},\widetilde{\beta})\stackrel{\pi}{\longrightarrow }(L,\alpha,\beta) \longrightarrow 0 $$
satisfying $\widetilde{\alpha}\circ  i=i\circ \alpha_V  $,  $\widetilde{\beta}\circ  i=i\circ \beta_V  $, $\alpha \circ \ \pi = \pi\  o \ \widetilde{\alpha}$  and $\beta \circ  \pi = \pi\circ \ \widetilde{\beta}.$\\
We say that the extension is central if $[\widetilde{\mathcal{G}},i(V)]_{\widetilde{\mathcal{G}}}=0.$\\
Two extensions $$0 \longrightarrow  (V_{k},\alpha_{V_k},\beta_{V_k})\stackrel{i_{k}}{\longrightarrow}      (L_{k},\alpha_{k},\beta_{k})\stackrel{\pi_{k}}{\longrightarrow } (L,\alpha,\beta) \longrightarrow 0 \ \ \ (k=1,2)$$ are equivalent if there is an isomorphism $\varphi\colon(L_{1},\alpha_{1},_{1})\rightarrow (L_{2},\alpha_{2},\beta_{2})$ such that $\varphi \circ i_{1}= i_{2}$ and $\pi_{2}\circ \varphi=\pi_{1}.$   One denote by $Ext(L,V)$ the set of isomorphism classes of extensions of $L$ by $V$.  In the sequel, we  assume that $ i(V) $ is of finite codimension in $ \widetilde{L} $ .\\

Let $f \in C^{2}_{(\alpha ,\alpha_V)}(L,\ V).$ Assume that $L\cap V=\{0\}$ and 
 we consider the direct sum $\widetilde{L}=L\oplus V $ with the following bracket $$ \big[(x,u), (y,v)\big]_{\widetilde{L}}=\big([x,y],[x,v]_{V}+[u,y]_{V}+f(x,y)\big);\ \ \ \forall x,y \in L, \     u, v\in V.$$
Define the linear maps $ \widetilde{\alpha}, \widetilde{\beta}\colon \widetilde{L} \to  \widetilde{L}$ by $\widetilde{\alpha}(x,v)=(\alpha(x),\alpha_V(v)) $  by $\widetilde{\beta}(x,v)=(\beta(x),\beta_V(v)) .$
\begin{lem}
 With the above notations,	the $4$-tuple  $(\widetilde{L},[\cdot,\cdot]_{\widetilde{L}},\widetilde{\alpha},\widetilde{\beta})$   is a symmetric BiHom-Leibniz of type $B_1$
	if and only if $f$ is a $2$-cocycle (i.e. $\delta^{2}(f)=0$).\\
\end{lem}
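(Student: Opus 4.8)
The plan is to verify directly that the two defining identities of a symmetric BiHom–Leibniz algebra of type $B_1$, in the form given by the characterizing Proposition, hold for $(\widetilde{L},[\cdot,\cdot]_{\widetilde{L}},\widetilde{\alpha},\widetilde{\beta})$, and to isolate the obstruction as a condition on $f$. First I would record the structural compatibilities. Since $\alpha\beta=\beta\alpha$ and, because $V$ is a representation, $\alpha_V\beta_V=\beta_V\alpha_V$, we get $\widetilde{\alpha}\widetilde{\beta}=\widetilde{\beta}\widetilde{\alpha}$ at once. Multiplicativity $\widetilde{\alpha}([(x,u),(y,v)]_{\widetilde{L}})=[\widetilde{\alpha}(x,u),\widetilde{\alpha}(y,v)]_{\widetilde{L}}$ reduces on the $L$-slot to $\alpha[x,y]=[\alpha(x),\alpha(y)]$ and on the $V$-slot to $\alpha_V[x,v]_V=[\alpha(x),\alpha_V(v)]_V$, $\alpha_V[u,y]_V=[\alpha_V(u),\alpha(y)]_V$ and $\alpha_V f(x,y)=f(\alpha(x),\alpha(y))$; the first two are the intertwining axioms of the representation, and the last is exactly the requirement $f\in C^2_{(\alpha,\alpha_V)}(L,V)$. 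The same holds for $\widetilde{\beta}$, so $\widetilde{\alpha},\widetilde{\beta}$ are the correct commuting multiplicative maps once $f$ is taken in $C^2_{(\alpha,\alpha_V)}(L,V)$.

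Next I would substitute $X=(x,u)$, $Y=(y,v)$, $Z=(z,w)\in\widetilde{L}$ into the two identities
\[ \big[[X,Y]_{\widetilde{L}},\widetilde{\alpha}(Z)\big]_{\widetilde{L}}=\big[[X,Z]_{\widetilde{L}},\widetilde{\alpha}(Y)\big]_{\widetilde{L}}+\big[\widetilde{\alpha}(X),[Y,\widetilde{\beta}(Z)]_{\widetilde{L}}\big]_{\widetilde{L}} \]
and
\[ \big[\widetilde{\alpha}(Y),[\widetilde{\beta}(X),\widetilde{\beta}(Z)]_{\widetilde{L}}\big]_{\widetilde{L}}=-\big[[X,Z]_{\widetilde{L}},\widetilde{\alpha}\widetilde{\beta}(Y)\big]_{\widetilde{L}}, \]
and project each onto $L$ and onto $V$. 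Using $[(x,u),(y,v)]_{\widetilde{L}}=([x,y],[x,v]_V+[u,y]_V+f(x,y))$, the $L$-projection of each identity is precisely the corresponding defining identity of $L$, which holds because $L$ is by hypothesis a symmetric BiHom–Leibniz algebra of type $B_1$; thus the $L$-components impose nothing.

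The heart of the argument is the $V$-projection. Collecting terms, the $V$-component of each identity splits into (a) the terms built only from the module actions and the maps $\alpha_V,\beta_V$ (no factor of $f$), and (b) the terms containing exactly one factor of $f$. Part (a) vanishes identically by the symmetric–representation identities gathered in the preceding Proposition, which are exactly the left and right representation axioms specialised to these two identities. Part (b), after using $f\in C^2_{(\alpha,\alpha_V)}(L,V)$ to commute $\alpha_V,\beta_V$ through $f$ and noting that the twist $\alpha^{n+k-1}\beta^{m+k-1}$ of the coboundary specialises for $k=2$ to $\alpha\beta$, reassembles term by term into the two $V$-valued expressions that together constitute $\delta^2(f)(x,y,z)$. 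Hence both identities hold for $\widetilde{L}$ precisely when $\delta^2(f)(x,y,z)=0$ for all $x,y,z$, and since every step is an equality of $V$-valued expressions, reading the implications in both directions gives the stated equivalence.

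The main obstacle I expect is purely the bookkeeping in the $V$-projection: in part (a) one must confirm that every module-only term cancels using the correct representation axiom, with the correct sign and the correct ordering of $\alpha,\beta$ against $\alpha_V,\beta_V$, and in part (b) one must match the surviving $f$-terms against the explicit coboundary formula, aligning the powers $\alpha^{n+k-1}\beta^{m+k-1}$ with the occurrences of $\widetilde{\alpha},\widetilde{\beta}$ inside the brackets. Once this dictionary between the expanded extension identities and $\delta^2$ is fixed, the equivalence follows without further input.
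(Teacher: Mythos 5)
Your overall strategy --- expand the two identities of the characterizing Proposition componentwise on $\widetilde{L}=L\oplus V$, let the module-only terms cancel by the symmetric-representation axioms, and identify the surviving $f$-terms with $\delta^{2}(f)$ --- is exactly the verification the paper intends (the paper states this lemma without writing out a proof), and your treatment of $\widetilde{\alpha},\widetilde{\beta}$ and of multiplicativity via $f\in C^{2}_{(\alpha,\alpha_V)}(L,V)$ is correct, as is your observation that the $L$-components impose nothing.

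However, the proposal stops exactly where the content of the lemma lies, and the step you defer is not mere bookkeeping. First, ``symmetric of type $B_1$'' is the conjunction of two independent identities, and each produces its own $f$-condition after the module terms cancel: the right identity yields
\[
\big[f(x,y),\alpha(z)\big]_V+f\big([x,y],\alpha(z)\big)-\big[f(x,z),\alpha(y)\big]_V-f\big([x,z],\alpha(y)\big)-\big[\alpha(x),f(y,\beta(z))\big]_V-f\big(\alpha(x),[y,\beta(z)]\big)=0,
\]
while the symmetry identity $\big[\widetilde{\alpha}(Y),[\widetilde{\beta}(X),\widetilde{\beta}(Z)]_{\widetilde{L}}\big]_{\widetilde{L}}=-\big[[X,Z]_{\widetilde{L}},\widetilde{\alpha}\widetilde{\beta}(Y)\big]_{\widetilde{L}}$ yields the separate condition
\[
\big[\alpha(y),f(\beta(x),\beta(z))\big]_V+f\big(\alpha(y),[\beta(x),\beta(z)]\big)+\big[f(x,z),\alpha\beta(y)\big]_V+f\big([x,z],\alpha\beta(y)\big)=0 .
\]
Since $\delta^{2}(f)=0$ is a single equation, saying the two expressions ``together constitute'' $\delta^{2}(f)$ does not explain how one equation can be equivalent to this conjunction: you must actually show that the first condition is $\delta^{2}(f)(x,y,z)=0$ and that the second follows from it together with the symmetric-representation identities --- and only $\alpha$-equivariance $f(\alpha(x),\alpha(y))=\alpha_V f(x,y)$ is available, no $\beta$-equivariance of $f$ being assumed, so this is not automatic. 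Second, the twist does not match as you claim: with $k=2$ the coboundary carries $\alpha^{n+1}\beta^{m+1}$, so your reading ($n=m=0$, giving $\alpha\beta$) produces terms $[\alpha\beta(x),f(y,\beta(z))]_V$ and $[f(x,y),\alpha\beta(z)]_V$, whereas the expansion of the extension bracket produces the untwisted $[\alpha(x),f(y,\beta(z))]_V$ and $[f(x,y),\alpha(z)]_V$ displayed above; reconciling these requires either a different choice of $(n,m)$ or a genuine appeal to the compatibilities of the actions with $\beta,\beta_V$ --- precisely the computation you postpone. Until this matching is carried out in both directions, the ``if and only if'' is asserted rather than proved.
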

\begin{thm}
	  For any   symmetric BiHom-Leibniz of $L$ type $B_1$  and any    symmetric representation $V$ of $L$, there is a natural bijection 
	  \[     Ext(L,V)\cong  H^{2}(L, V) .      \]                  
\end{thm}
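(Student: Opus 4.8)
Proof plan for the final theorem $Ext(L,V)\cong H^2(L,V)$.

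The plan is to construct explicit maps in both directions between the set of equivalence classes of extensions and the second cohomology group, and to verify that they are mutually inverse. The crucial input is the preceding lemma, which states that the $4$-tuple $(\widetilde{L},[\cdot,\cdot]_{\widetilde{L}},\widetilde{\alpha},\widetilde{\beta})$ built from a bilinear map $f\in C^2_{(\alpha,\alpha_V)}(L,V)$ is a symmetric BiHom-Leibniz algebra of type $B_1$ precisely when $f$ is a $2$-cocycle. This immediately gives the forward direction: a $2$-cocycle $f$ determines the extension
\[
0\longrightarrow (V,\alpha_V,\beta_V)\stackrel{i}{\longrightarrow}(\widetilde{L},\widetilde{\alpha},\widetilde{\beta})\stackrel{\pi}{\longrightarrow}(L,\alpha,\beta)\longrightarrow 0,
\]
where $i(v)=(0,v)$ and $\pi(x,v)=x$, and one checks that the compatibility conditions $\widetilde{\alpha}\circ i=i\circ\alpha_V$, $\widetilde{\beta}\circ i=i\circ\beta_V$, $\alpha\circ\pi=\pi\circ\widetilde{\alpha}$, $\beta\circ\pi=\pi\circ\widetilde{\beta}$ hold by the definition of $\widetilde{\alpha},\widetilde{\beta}$.

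Next I would show the assignment descends to cohomology classes. First I would check that cohomologous cocycles $f$ and $f'=f+\delta^1(g)$, with $g\in C^1_{(\alpha,\alpha_V)}(L,V)$, give equivalent extensions: the map $\varphi\colon\widetilde{L}\to\widetilde{L}'$ defined by $\varphi(x,v)=(x,v+g(x))$ is the required isomorphism, and one verifies $\varphi\circ i=i'$, $\pi'\circ\varphi=\pi$, and that $\varphi$ respects the brackets and the twisting maps (this uses $g\circ\alpha=\alpha_V\circ g$). Conversely, given an arbitrary extension, since $i(V)$ has finite codimension and $\pi$ is surjective one may choose a $\K$-linear section $\sigma\colon L\to\widetilde{L}$ of $\pi$; then $f(x,y):=[\sigma(x),\sigma(y)]_{\widetilde{L}}-\sigma([x,y])$ lands in $i(V)\cong V$ and, after adjusting $\sigma$ so that it commutes appropriately with the twist maps, defines a $2$-cochain in $C^2_{(\alpha,\alpha_V)}(L,V)$. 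The symmetric BiHom-Leibniz identity of type $B_1$ for $\widetilde{L}$ translates, via the lemma, into $\delta^2(f)=0$, so $f$ is a cocycle; a different choice of section changes $f$ by a coboundary, so its class in $H^2(L,V)$ is well-defined.

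Finally I would verify that the two constructions are mutually inverse — starting from a cocycle, forming $\widetilde{L}$, and reading off a section recovers the original class, and starting from an extension, building $\widetilde{L}$ from the extracted cocycle yields an equivalent extension — and that each construction respects equivalence in both directions, giving the claimed bijection. The main obstacle I anticipate is the bookkeeping in the reverse direction: one must choose the section $\sigma$ so that the induced twisting maps on the identified space $L\oplus V$ coincide with $\alpha+\alpha_V$ and $\beta+\beta_V$, since the extension data only guarantee $\widetilde{\alpha}\circ i=i\circ\alpha_V$ and $\alpha\circ\pi=\pi\circ\widetilde{\alpha}$ (and analogously for $\beta$), not that $\sigma$ itself intertwines the maps. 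Handling this correctly — possibly by first modifying $\sigma$ using the commuting, surjective structure of $\alpha,\beta$ — and ensuring the resulting $f$ genuinely lies in $C^2_{(\alpha,\alpha_V)}(L,V)$ is where the argument requires care; everything else reduces to the bracket computations already encapsulated in the lemma together with routine cocycle/coboundary manipulations.
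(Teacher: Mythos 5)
Your plan follows what is evidently the intended route: the paper itself states this theorem with no proof at all, immediately after the lemma identifying $2$-cocycles with the symmetric type-$B_1$ structure on $\widetilde{L}=L\oplus V$, so the lemma-based dictionary (cocycle $\mapsto$ split-as-vector-space extension; cohomologous cocycles $\mapsto$ equivalent extensions via $\varphi(x,v)=(x,v+g(x))$; extension $\mapsto$ cocycle via a section $\sigma$ and $f(x,y)=[\sigma(x),\sigma(y)]_{\widetilde{L}}-\sigma([x,y])$) is exactly the argument the paper leaves implicit. To that extent your proposal is not merely consistent with the paper — it supplies details the paper omits.

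However, the obstacle you flag in your last paragraph is a genuine gap, and your ``possibly by first modifying $\sigma$'' does not resolve it. For an arbitrary linear section $\sigma$ of $\pi$, the map $\widetilde{\alpha}\circ\sigma-\sigma\circ\alpha$ takes values in $i(V)$ but need not vanish, so the identification $\widetilde{L}\cong L\oplus V$ transports $\widetilde{\alpha}$ to a map of the form $(x,v)\mapsto(\alpha(x),\alpha_V(v)+\lambda(x))$ rather than to $\alpha+\alpha_V$, and the extracted $f$ then fails the condition $f\circ(\alpha\otimes\alpha)=\alpha_V\circ f$ defining $C^2_{(\alpha,\alpha_V)}(L,V)$; surjectivity of $\alpha,\beta$ on $L$ and finite codimension of $i(V)$ do not by themselves produce an equivariant section (one typically needs invertibility of the twists, or one must restrict $Ext(L,V)$ to extensions admitting compatible sections). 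A second, related mismatch: the paper's cochain spaces impose only $\alpha$-compatibility ($f\circ\alpha=\alpha_V\circ f$), while equivalence of extensions requires $\varphi$ to intertwine \emph{both} $\widetilde{\alpha}$ and $\widetilde{\beta}$; your map $\varphi(x,v)=(x,v+g(x))$ intertwines $\widetilde{\beta}$ only if $g\circ\beta=\beta_V\circ g$, which is not guaranteed for $g\in C^1_{(\alpha,\alpha_V)}(L,V)$, so ``cohomologous $\Rightarrow$ equivalent'' also needs an extra hypothesis or a $\beta$-compatibility condition built into the complex. Since the paper offers no proof, it silently inherits both issues; a complete argument must either strengthen the cochain conditions to require compatibility with $\beta$ as well, or impose bijectivity-type assumptions on $\alpha_V,\beta_V$ under which the discrepancy $\lambda$ can be absorbed by modifying $\sigma$.
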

\section*{acknowledgements}
The author would like to thank professor Abdenacer Makhlouf for carefully reading and checking the paper and
his useful suggestions and comments.


\begin{thebibliography}{99}
	
	\bibitem{Ammar}
	Ammar F. and Makhlouf A., 
	\emph{Hom-Lie superalgebras and Hom-Lie admissible
		superalgebras}
	Journal of Algebra 324 (2010) 1513–1528.
	\bibitem{Najib}
	Ammar, F., Makhlouf A., and Saadaoui, N.
	\emph{Cohomology of Hom-Lie superalgebras and q-deformed Witt superalgebra.}
	Czechoslovak Mathematical Journal.
	\bm{$63$} $(2013)$, $721–761$.
	\bibitem{Ayedi}
	Benayadi, S., and   Makhlouf, A.
	\emph{Hom–Lie algebras with symmetric invariant nondegenerate bilinear forms,}
	Journal of Geometry and Physics. \bm{$ 76 $} $(2014)$, $ 38-60. $\\
	\bibitem{Samiha}
	Benayadi, S., and Hidri, S.,
	\emph{Quadratic Leibniz Algebras,}	
	Journal of Lie Theory
	Volume \bm{$24$} $(2014)$ $737–759$.
	
	
	
	
	\bibitem{CassasClssif}
	Casas, J. M., M. Ladra, B. A. Omirove, and I. A. Karimjanov,\emph{ Classification
		of solvable Leibniz algebras with naturally graded filiform nilradical} , Linear
	Algebra Appl. \bm{ $43$} $(2013)$, $2973–3000$.
\bibitem{CassasJM}	
	Casas, J. M., Khmaladze, E. and Pacheco Rego,N.,
\emph{A non-abelian Hom-Leibniz tensor product and
	applications.    }	
Linear and Multilinear Algebra.
\bm{ $66$}  $6$  $(2018)$, $1133-1152$.
	
\bibitem{Chevalley}	
Chevalley C., Eilenberg S., \emph{Cohomology theory of Lie groups and Lie algebras}.
Transactions of the American Mathematical Society.
\bm{ $63$}  $(1948)$, $85-124$.

	
	\bibitem{HomSuperLeibniz}
Chunyue, W.,
, Qingcheng, Z.
and Zhu, W.,
\emph{Hom-Leibniz superalgebras and hom-Leibniz
	poisson superalgebras.}
Hacettepe Journal of Mathematics and Statistic.
 \bm{ $44$} $(5)$ $(2015)$, $1163 – 1179$.




	\bibitem{covez}
	Covez, S., \emph{  The local integration of Leibniz algebras,} 
	Annales de l'Institut Fourier,  
	\bm{ $63$}  $(2013)$ $no. 1$,   $1-35$.
	\bibitem{cuvier}
	Cuvier, C.,\emph{ Algèbres de Leibnitz: Définition, propriétées,} Annales scientifiques
	de l’E.N.S. (4e série) \bm{ $27$} $(1994), 1–45$.
	\bibitem{Camacho}
	Camacho, L.M., Gomez, J.R., Navarro, R.M., Omirov, B.A.\emph{ Classification of some nilpotent class of 
		Leibniz superalgebras}. Acta Mathematica Sinica, English Series
	\bm{ $ 26$} $(2010)$, $799-816.$
		\bibitem{Cheng1}
	Cheng, Y.S. and  Su, Y.C.
	\emph{(Co)Homology and universal central extension of Hom-Leibniz algebras}.
	Acta Mathematica Sinica, English Series.
	\bm{ $ 27$} $(2011)$, $813-811$.
	
	 
	
	
\bibitem{Daniel}	
Daniel,  L. \emph{ Arithmetic hom-Lie algebras, twisted
	derivations and non-commutative arithmetic
	schemes.      }
	 arXiv preprint arXiv:$1401.7777$, $(2014) $
	
	\bibitem{Demir}
	Demir, I., K. C. Misra, and E. Stitzinger, \emph{On some structures of Leibniz
		algebras},  
	contemporary Mathematics, \bm{ $623$} $2014$.
	\bibitem{colourSuper}
	 Dzhumadil’daev, A. \emph{   Cohomologies of colour Leibniz algebras: Pre-simplicial approach,}
	Lie Theory and its applications in physics III, preceeding of the third international workshop,$(1999)$
	124-135.
	
	
	
	
	
	
	
	
	
	\bibitem{Fialowski}
	Fialowski, A., L. Magnin, and A. Mandan, About Leibniz cohomology and
	deformations of Lie algebras, J. Algebra \bm{$383$} $(2013)$, $63–77$.
	
	\bibitem{Geoffrey}
	Geoffrey, M., and Y., 
	\emph{Gaywalee, Leibniz algebras and Lie algebras},
	Symmetry, Integrability and Geometry: Methods and Applications,
	SIGMA $9$ 4(2013)4, \bm{$063$}.
	\bibitem{George}	
George, F.L. and Eugene, M.L.,
	\emph{Generalized Derivations of Lie Algebras.}	
Journal of Algebra. \bm{ $228$} $(2000)$, $165–203$. 	
	
	
	
	\bibitem{Gorbatsevich}
	Gorbatsevich, V.,
	\emph{On some basic properties of Leibniz algebras,}
	arXiv:$1302.3345v2$ [math.RA], $(2013)$.
	\bibitem{Gomez}
	Gòmez-Vidal, S., B. A. Omirov, and A. K. H. Khudoyberdiyev, \emph{Some remarks
		on semisimple Leibniz algebras},
	Journal of Algebra
	\bm{$410$} , $( 2014)$, $ 526-540$.
	\bibitem{MakhloufABihom}
	Graziani G., Makhlouf A., Menini C., Panaite F.,\emph{ Bihom-associative algebras,
		Bihom-Lie algebras and Bihom-bialgebras}, Symmetry Integrability and geometry
	: Methods and Applications
	SIGMA \bm{ $11$} $(2015)$, $086$, $34 pages$.
	
	
	\bibitem{HLS}
	Hartwig J. T., Larsson D. and  Silvestrov S.D.,
	\emph{Deformations of Lie algebras using
		$\sigma$-derivations,}
	Journal of Algebra, \bm{$295$} 
	$(2006)$, $314-361$.
	\bibitem{Hu}
	Hu, N., Liu, D. and  Zhu, L.,\emph{ Leibniz superalgebras and central extensions},
	Journal of Algebra and Its Applications,
	\bm{ $ 05$}, $(2006)$, $765-789$.
	.
	\bibitem{centroiide}
	Husain, Sh K Said,  Rakhimov, I.S     and   Basri, W,    
	\emph{Centroids and derivations of low-dimensional Leibniz algebra} AIP Conference Proceedings 1870, 040006 (2017).
	
	
	
	
	
	

	
	
	
	\bibitem{bihomC}	
	Kadri, A., Abdelkader B.H. and Abdenacer, M.,
\emph{BiHom-Lie colour algebras structures   .}	
arXiv preprint arXiv:1706.02188, 2017.	
	
	\bibitem{TensorPowe}
	Kurdiani, R., and Pirashvili, T.,\emph{	
		A Leibniz Algebra Structure on the Second Tensor Power.}	
	Journal of Lie Theory.	
	\bm{    $12$}
	($2002$)
	$583-596$.
	
	
	
	
	
	
	\bibitem{bihom}
	Liu, L., Makhlouf, A., Menini, C.,  and Panaite, F. 
	\emph{ BiHom-pre-Lie algebras, BiHom-Leibniz algebras and
		Rota-Baxter operators on BiHom-Lie algebras   }
	arXiv:   $ 1706.00474$v$1 $ [math.QA] $ (2017) $

	
	
		
	\bibitem{Loday1}
	Loday, J.-L., \emph{Une version non-commutative des algèbres de Lie: Les algèbres
		de Leibniz }, Ens. Math. \bm{$39$} $(1993)$, $269–293$.
	\bibitem{Loday2}
	Loday, J.-L., \emph{Cyclic Homology}, Springer-Verlag,$(1992)$
	\bibitem{Loday3}
	Loday, J.-L.  and Pirashvili, Teimuraz, P.   \emph{Universal enveloping algebras
		of Leibniz algebras and (co)homology },
	 Mathematische Annalen, 
	  \bm{$ 296$}, $(1993)$ $139-158$.  
	
	
	
	\bibitem{AbdenacerM}
	Makhlouf,A., and   Silvestrov, S.,
	\emph{Hom-algebra structures},
	Journal of Generalized Lie Theory and Applications
	\bm{$2\, (2)$}.  $ (2008) $ $51-64$.
	\bibitem{AbdenacerSil}
	 Makhlouf, A., Silvestrov, S.,\emph{ Notes on formal deformations of hom-associative and
	hom-Lie algebras}, Forum Math. \bm{$22$} $(4)$  $(2010)$, $715C739$.
	

	
	
	
	
	
	
\bibitem{Petr}	
 Petr, N. and Jiri, H.
 \emph{ On $(\alpha,\beta,\gamma)$-derivations of Lie algebras and corresponding
 	invariant functions.   }	
 Journal of Geometry and Physics \bm{  $58$} $(2008)$. $208–217$.	
\bibitem{Pirashvili}
 Pirashvili,T. \emph{On Leibniz homology.}
 Annales de l'institut Fourier,
 \bm{$44 $} $(1994)$,$401-411$.
 
 

	
	
	
	
	\bibitem{Ray}
	Ray, B., A. Hedges, and E. Stitzinger, \emph{Classifying several classes of Leibniz
		algebras,}
	Algebras and Representation Theory, \bm{ $17$},    $(2014)$, $703-712$ .
	\bibitem{Ritt}
	Rittenberg,V.  and Wyler, D., \emph{ Sequences of $\Z_2\otimes \Z_2$ graded Lie algebras and superalgebras}.
	Journal of Mathematical Physics,
	\bm{ $19$},
	$(1978)$.
	\bibitem{RittV}
	Rittenberg,V.  and Wyler, D., \emph{ 
		Generalized superalgebras}.
	Nuclear Physics B,
	\bm{ $139$},
	$(1978)$,
	$189–202.$
	
	
	
	\bibitem{Sheng}		
Sheng, Y.,\emph{  Representations of Hom-Lie algebras},
Algebr Represent Theor \bm{  $15$} $(2012)$ $1081–1098$.
	\bibitem{ShengBi}
Sheng, Y.and   	Huange, Q.\emph{
	Representations of Bihom-Lie algebras}
arXiv:$1610.04302$ 	
$(2016)$ .	
	
		
	
	
	
	
	
	
	
	
	
	
	
	
	
	

	
	
	
	
	
	
	
		\bibitem{homtype}
	Yael, F., and  Aron, G. \emph{  On Hom-type algebras,  }
	Journal of Generalized Lie Theory and Applications. \bm{ $  4 $}  $ (2010) $,  Article ID G101001, $1-16$.\\ 
		\bibitem{Yau}
	Yau, D.,\emph{Hom-Algebras and Homology , } 
	Journal of Lie Theory 19 (2009) 409-421.
	\bibitem{Yuan}
Yuan, L., \emph{Hom-Lie colour algebra structures } . Communications in Algebra
\bm{$40$} $(2)$ $(2012)$. $575–592$.
\bibitem{Yongsheng}
Yongsheng, C., Huange, Qi.
\emph{Representations of Bihom-Lie algebras}.
arXiv preprint arXiv:$1610.04302$, $(2016)$ 	
	
	
	
	
	
	
	

	
	
	
	
\end{thebibliography}
\end{document}